\renewcommand{\subsectionmark}[1]{} 
\theoremstyle{plain}
\newtheorem{theorem}{Theorem}[section]
\newtheorem{proposition}[theorem]{Proposition}
\newtheorem{lemma}[theorem]{Lemma}
\newtheorem{corollary}[theorem]{Corollary}
\newtheorem*{theoremA}{Theorem A}
\newtheorem*{theoremB}{Theorem B}
\newtheorem*{theoremC}{Theorem C}
\newtheorem*{theoremD}{Theorem D}
\theoremstyle{definition}
\newtheorem{definition}[theorem]{Definition}
\theoremstyle{remark}
\newcommand{\N}{\mathbb N}
\newcommand{\R}{\mathbb R} 
\newcommand{\C}{\mathbb C}
\newcommand{\K}{\mathbb K}
\renewcommand{\epsilon}{\varepsilon}
\newcommand\Exp{\mathop{\mathrm{E}}\nolimits}
\newcommand\Lin{\mathrm{Lin}}
\newcommand\id{\mathrm{id}}
\newcommand{\set}[2]{ \left\{  #1  :  #2  \right\} }
\newcommand{\smset}[1]{\{ #1 \}}
\newcommand{\func}[5]{#1 \colon #2 \longrightarrow #3 : #4 \mapsto #5}
\newcommand{\smfunc}[3]{#1 \colon #2 \longrightarrow #3}
\newcommand{\nnfunc}[4]{#1 \longrightarrow #2 : #3 \mapsto #4}
\newcommand{\nnsmfunc}[2]{#1 \longrightarrow #2}
\newcommand{\Func}[5]{
\begin{array}{rl}
   #1 : #2 & \longrightarrow #3   \\
        #4 & \longmapsto     #5   \\
\end{array}
}
\newcommand{\nnFunc}[4]{
\begin{array}{rl}
        #1 & \longrightarrow #2   \\
        #3 & \longmapsto     #4   \\
\end{array}
}
\newcommand{\oBallin}[3]{\mathrm{B}_{#1}^{#2}\left(#3 \right)}
\newcommand{\V}{\mathcal{V}}
\newcommand{\Vdelta  }{\V\left(\delta_1, \delta_2, \ldots \right) }
\newcommand{\norm}[1]{\ensuremath{\left\| #1 \right\|}}
\newcommand{\biggnorm}[1]{\ensuremath{\bigg\| #1 \bigg\|}}
\newcommand{\Biggnorm}[1]{\ensuremath{\Bigg\| #1 \Bigg\|}}
\newcommand{\opnorm}[1]{\ensuremath{  \norm{  #1 }_{\mathrm{op}} }\!  }
\newcommand{\infnorm}[1]{\ensuremath{  \norm{  #1 }_\infty }}
\newcommand{\supnorm}[1]{\infnorm{#1}}
\newcommand{\maxnorm}[1]{\infnorm{#1}}
\newcommand{\Xnorm}[1]{\ensuremath{  \norm{  #1 }_{X} }}
\newcommand{\XBiggnorm}[1]{\ensuremath{  \Biggnorm{  #1 }_{X} }}
\newcommand{\nnorm}[1]{\ensuremath{  \norm{  #1 }_{n} }}
\newcommand{\gnorm}[1]{\ensuremath{  \norm{  #1 }_{\g} }}
\newcommand{\Dnorm}[1]{\ensuremath{  \norm{  #1 }_{\mathrm{D}} }}
\newcommand{\abs}[1]{\ensuremath{| #1 |}}
\newcommand{\g}{\mathfrak{g}}
\renewcommand{\L}{\mathbf{L}}
\newcommand{\brac}{[\cdot,\cdot]}
\renewcommand{\Exp}{\mathrm{Exp}}
\newcommand{\Ad}{\mathrm{Ad}}
\newcommand{\generatedby}[1]{\ensuremath{ \left\langle   #1  \right\rangle  }}
\newcommand{\component}[1]{\left(#1\right)_0}
\newcommand{\seqn}[1]{\left(#1\right)_{n\in \N}}
\newcommand{\lone}{\ell^1}
\newcommand{\Di}[2]{D_{#1}\!\left(#2\right)}
\newcommand{\Dinf}[1]{\Di{\infty}{#1}}
\newcommand{\DV}{\mathbf{z}}
\newcommand{\Din}[1]{\sum_{n\in\N} #1 \cdot n^{-\DV}}
\newcommand{\DiN}[1]{\sum_{N\in\N} #1 \cdot N^{-\DV}}
\newcommand{\DiOne}[1]{            #1 \cdot 1^{-\DV}}
\newcommand{\Halfo}[1]{\mathbb{H}_{#1}}
\newcommand{\Halfc}[1]{\overline{\mathbb{H}}_{#1}}
\newcommand{\DiNorm}[2]{\norm{#1}_{(#2)}}
\newcommand{\biggDiNorm}[2]{\biggnorm{#1}_{(#2)}}
\newcommand{\Cont}[2]{\mathcal{C}\left( #1 , #2 \right)}
\newcommand{\Contb}[2]{\mathcal{C}_b\left( #1 , #2 \right)}
\newcommand{\BoundOp}[1]{\mathcal{L}\left(#1\right)}
\newcommand{\BoundOpNorm}[1]{\left( \BoundOp{#1}  , \opnorm{\cdot}   \right)}
\newcommand{\injepsilon}{\epsilon_\circ}
\newcommand{\cl}[1]{\left[ #1 \right]_\sim}
\newcommand{\GL}[1]{ \mathrm{GL}\left(#1\right) }
\newcommand{\DiffGermK}{\mathrm{DiffGerm}(K,X)}
\newcommand{\GermDiffK}{\DiffGermK}
\newcommand{\EndGermK}{\mathrm{EndGerm}(K,X)}
\newcommand{\GermEndK}{\EndGermK}
\newcommand{\GermK}{\mathrm{Germ}(K,X)_K}
\newcommand{\HolbK}[1]{\mathrm{Hol}_\mathrm{b} \left(U_{#1},X\right)_K  }
\newcommand{\BConeK}[1]{\mathrm{BC}_\C^1 \left(U_{#1},X\right)_K  }
\newcommand{\glchart}{\Phi}
\newcommand{\Inversion}{\mathbf{i}}
\newcommand{\BCH}{\emph{BCH}}
\newcommand{\Frechet}{Fréchet}
\newcommand{\frechet}{Fréchet}
\title{Analytic Mappings Between LB-spaces and Applications in Infinite-Dimensional Lie Theory}
\author{Rafael Dahmen}
\date{July 22, 2008
}
\begin{document}
\maketitle
\section*{Abstract}
\markright{\textsc{Abstract}}

We give a sufficient criterion for complex analyticity of nonlinear maps defined on direct limits of normed spaces. This tool is then used to construct new classes of (real and complex) infinite dimensional Lie groups: The group $\DiffGermK$ of germs of analytic diffeomorphisms around a compact set $K$ in a Banach space $X$ and the group $\bigcup_{n\in\N}G_n$ where the $G_n$ are Banach Lie groups.

\tableofcontents

\subsection*{Acknowledgement}
The author was supported
by the German Research Foundation (DFG),
grant GL 357/7-1.
The studies are part of the author's Ph.D.-project,
advised by Helge Gl\"{o}ckner.

\section*{Introduction}
\markright{\textsc{Introduction}}
\addcontentsline{toc}{section}{Introduction}

An infinite dimensional complex analytic Lie group is a group which is at the same time a complex analytic manifold modelled on some locally convex topological vector space such that the group operations are analytic. To construct such Lie groups, it is useful to have tools at hand ensuring the complex analyticity of nonlinear mappings between locally convex spaces. This text provides a sufficient criterion for complex analyticity in the case where the domain is an LB-space, i.e. a locally convex direct limit of an ascending sequence of Banach spaces:

\begin{theoremA}[Complex analytic mappings defined on LB-spaces]				 	\label{thm_LB}
 Let $E$ be a $\C$-vector space that is the union of the increasing sequence of subspaces $(E_n)_{n\in \N}$. Assume that a norm $\norm{\cdot}_{E_n}$ is given on each $E_n$ such that all bonding maps
\[ \func{i_n }{E_n}{E_{n+1}}{x}{x}
\]
are continuous and have an operator norm at most $1$. We give $E$ the locally convex direct limit topology and assume that it is Hausdorff.
Let $R>0$ and let $U:=\bigcup_{n\in \N} \oBallin{R}{E_n}{0}$ be the union of all open balls with radius $R$ around $0$.
Let $\smfunc{f}{U}{F}$ be a function defined on $U$ with values in a locally convex space $F$, such that each $f_n := f|_{\oBallin{R}{E_n}{0}}$ is $\C$-analytic and \emph{bounded}. Then $f$ is $\C$-analytic.
\end{theoremA}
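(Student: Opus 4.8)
The plan is to deduce $\C$-analyticity of $f$ from the standard characterization that a map between complex locally convex spaces is $\C$-analytic as soon as it is \emph{continuous} and analytic along every complex affine line (equivalently, continuous and weakly analytic). Thus it suffices to verify two things: (i) for each $x_0\in U$ and $v\in E$ the curve $z\mapsto f(x_0+zv)$ is $\C$-analytic near $0$, and (ii) $f$ is continuous on $U$ for the direct limit topology. Step (i) is routine, while the continuity in step (ii) is exactly where the two hypotheses --- operator norms at most $1$ and boundedness of each $f_n$ --- enter, and it is the main obstacle.

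For (i), fix $x_0\in \oBallin{R}{E_{n_0}}{0}$ and $v\in E$, and choose $N\ge n_0$ so large that $v\in E_N$. Since the bound on the bonding maps gives $\norm{x_0}_{E_N}\le\norm{x_0}_{E_{n_0}}<R$, the segment $z\mapsto x_0+zv$ stays inside $\oBallin{R}{E_N}{0}$ for small $\abs{z}$, so $z\mapsto f(x_0+zv)=f_N(x_0+zv)$ is $\C$-analytic as a composition with the $\C$-analytic $f_N$. Hence $f$ is analytic along every complex line.

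For (ii) I would work with the standard basis of $0$-neighborhoods $\Vdelta$ of the direct limit topology (absolutely convex hulls of $\bigcup_n\oBallin{\delta_n}{E_n}{0}$). Fix a continuous seminorm $q$ on $F$ and $\epsilon>0$; as continuity may be checked one seminorm at a time, the neighborhood is allowed to depend on $q$. Boundedness of $f_n$ gives $M_{n,q}:=\sup\{q(f(x)):x\in\oBallin{R}{E_n}{0}\}<\infty$, and Cauchy estimates for the bounded $\C$-analytic map $f_n$ yield a Lipschitz bound $q(f_n(x)-f_n(y))\le \frac{C\,M_{n,q}}{R-\rho}\norm{x-y}_{E_n}$ on $\oBallin{\rho}{E_n}{0}$. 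The \emph{naive} estimate --- bounding $q(f(x_0+h)-f(x_0))$ at the largest level $M$ occurring in a representation of $h\in\Vdelta$ --- fails: the top-level constant $M_{M,q}$ then multiplies a displacement that may live almost entirely in the low levels, and $M_{M,q}$ is unbounded in $M$. The \textbf{remedy is to peel off the levels one at a time}. Write $h=\sum_n s_n h_n$ with $h_n\in\oBallin{\delta_n}{E_n}{0}$, $\sum_n\abs{s_n}\le 1$ (finitely many terms), fix $\rho$ with $\norm{x_0}_{E_{n_0}}<\rho<R$, and set $g_m:=x_0+\sum_{n\le m}s_n h_n$. The operator-norm bound $\norm{\cdot}_{E_{m+1}}\le\norm{\cdot}_{E_m}$ keeps each $g_m$ inside $\oBallin{\rho}{E_{\max(n_0,m)}}{0}$ once the $\delta_n$ are small, and the $m$-th increment $g_m-g_{m-1}=s_m h_m$ has $E_{\max(n_0,m)}$-norm at most $\abs{s_m}\delta_m$. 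Telescoping and estimating each increment \emph{at its own level} gives
\[ q\bigl(f(x_0+h)-f(x_0)\bigr)\ \le\ \frac{C}{R-\rho}\sum_{m} M_{\max(n_0,m),q}\,\abs{s_m}\,\delta_m\ \le\ \frac{C}{R-\rho}\sum_{m} M_{\max(n_0,m),q}\,\delta_m , \]
so that each radius $\delta_m$ is multiplied only by the constant of its own level. Choosing $\delta_m\le \epsilon\,(R-\rho)\,2^{-m}/\bigl(C\,M_{\max(n_0,m),q}\bigr)$ (and shrinking further if needed to keep the $g_m$ in the ball) bounds the sum by $\epsilon$, which proves continuity at $x_0$.

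Combining (i) and (ii) yields $\C$-analyticity by the cited characterization. Alternatively, and more concretely, one can construct the local expansion directly: since $f_{n+1}$ extends $f_n$ and Taylor coefficients are unique, the homogeneous Taylor polynomials of the $f_n$ at $x_0$ glue to well-defined $k$-homogeneous polynomials $p_k\colon E\to F$; the same level-peeling estimate shows each $p_k$ is continuous and that $\sum_k p_k$ represents $f$ near $x_0$. The crux throughout --- and the step I expect to be genuinely delicate --- is the decoupling in (ii): arranging that a small high-level perturbation does not ``unlock'' the large Cauchy constant of a high step, which is precisely what the telescoping across levels (powered by the norm bound on the bonding maps and the step-wise boundedness of $f$) achieves.
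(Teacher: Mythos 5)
Your argument is correct, and in the decisive step it takes a genuinely different route from the paper's proof. Both arguments share the reduction to continuity via Gateaux analyticity and both verify continuity on basic neighborhoods $\Vdelta$ whose radii $\delta_n$ are chosen to shrink against the level-$n$ bound on $f_n$; the difference lies in how the ``cost'' of a perturbation is attributed to the levels. The paper expands $f_m$ into its Taylor series at $0$, uses multilinearity to split $\beta_{k,m}(x,\ldots,x)$ over all tuples of levels, groups each multilinear term by the maximal level $n$ occurring in it, and pays for the large constant $S_n$ (supplied by Lemma \ref{lemma_absolute}) with the single small factor $\delta_n=a_nb_n$ guaranteed to appear in such a term. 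You instead telescope $f(x_0+h)-f(x_0)$ over the partial sums $g_m$ of the decomposition of $h$ and estimate the $m$-th increment with a Lipschitz bound for $f_{\max(n_0,m)}$ obtained from a first-order Cauchy estimate, so that the level-$m$ constant $M_{\max(n_0,m),q}$ multiplies only the level-$m$ displacement $\abs{s_m}\,\delta_m$. The underlying decoupling idea is the same, but your realization is more elementary: it needs only $\opnorm{f_n'(x)}\leq M_{n,q}/(R-\rho)$ on the smaller ball together with the mean value inequality, rather than the full summability statement of Lemma \ref{lemma_absolute} with its $2e$-constants and the comparison between homogeneous polynomials and symmetric multilinear maps; it also treats an arbitrary base point $x_0$ directly instead of translating to $0$. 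What the paper's heavier machinery buys is reuse, since Lemmas \ref{lemma_absolute} and \ref{lemma_absolute_family} are needed again in Section \ref{sec_DIFFEO}. The only points you should still write out are the routine ones you already flag: that the $\delta_n$ are small enough that $\sum_n\abs{s_n}\delta_n\leq\rho-\norm{x_0}_{E_{n_0}}$, so every partial sum $g_m$ (and hence the segment joining $g_{m-1}$ to $g_m$) stays in $\oBallin{\rho}{E_{\max(n_0,m)}}{0}$, and that the mean value inequality for the seminorm $q$ is justified (e.g.\ via Hahn--Banach, or by first reducing to Banach $F$ as the paper does).
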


Note that the statement cannot be generalized to direct limits of \Frechet{} spaces. There even exist bilinear mappings that are continuous on each step $E_n$ but fail to be continuous on the limit. Similar pathologies arise when looking at uncountable direct limits of normed spaces.

In Section \ref{sec_DIFFEO} we will use Theorem~A
to construct Lie groups of germs of diffeomorphisms:

\begin{theoremB}[Lie group of germs of diffeomorphisms]					\label{thm_DIFFEO}
 Let $\K\in\{\R,\C\}$ and let $X$ be a Banach space over $\K$. Let $K\subseteq X$ be a non-empty compact subset of $X$. Consider the group
\[
 \DiffGermK :=  \set{    \eta  }{  \begin{array}{c}
 								 \eta \hbox{ is a $C^\omega$-diffeomorphism between open} \\
								\hbox{ neighborhoods of $K$ and $\eta|_K=\id_K$}
\end{array}
 }/_\sim,
\]
 where two diffeomorphisms $\eta_1, \eta_2$ are considered equivalent, $\eta_1\sim\eta_2$ if they coincide on a common neighborhood of $K$.
 Then $\DiffGermK$ can be turned into a Lie group modelled on a compactly regular LB space.
\end{theoremB}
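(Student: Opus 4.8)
The plan is to realize $\DiffGermK$ as a Lie group modelled on the space $\GermK$ of germs around $K$ of $\K$-analytic maps into $X$ that vanish on $K$, using the displacement $\eta \mapsto \eta - \id$ as a chart at the identity and then transporting this chart by left translations.

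First I would make the model space precise as an LB-space fitting the hypotheses of Theorem~A. Fixing a decreasing sequence of open neighborhoods $U_n$ of $K$ (the $1/n$-neighborhoods, say), let $E_n$ be the Banach space of bounded $\K$-analytic maps $U_n \to X$ vanishing on $K$, equipped with the supremum norm; the bonding maps are the restrictions $E_n \to E_{n+1}$, which have operator norm at most $1$. Then $\GermK = \bigcup_n E_n$ carries the locally convex direct limit topology. To obtain the \emph{compactly regular} LB-space asserted in the statement, I would show the restriction operators $E_n \to E_{n+m}$ are compact: Cauchy estimates bound the first derivatives of a sup-bounded analytic map on the smaller neighborhood, yielding equicontinuity, so a Montel-type compactness argument for vector-valued analytic functions makes the image of the unit ball relatively compact. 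An (LB)-space with compact linking maps is compactly regular (in fact boundedly retractive), which in particular makes the limit Hausdorff, so that Theorem~A applies.

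Next I would set up the chart. For a germ $[\eta]$ close to the identity, $\eta = \id + \xi$ with $\xi$ a germ vanishing on $K$, and $\id + \xi$ is an analytic diffeomorphism onto its image as soon as a representative satisfies $\opnorm{D\xi} < 1$; this is an open condition defining an open subset $\Omega \subseteq \GermK$. I would check that $\phi : [\eta] \mapsto [\eta - \id]$ is a bijection from a subset of $\DiffGermK$ onto $\Omega$, using the analytic inverse function theorem on a single step $E_n$ to produce the inverse germ. The heart of the argument, and the main obstacle, is the analyticity of the group operations expressed in this chart. In charts, multiplication becomes the substitution
\[
(\xi_1, \xi_2) \longmapsto \xi_2 + \xi_1 \circ (\id + \xi_2),
\]
and inversion becomes $\xi \mapsto (\id + \xi)^{-1} - \id$. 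The substitution (composition) operator is the delicate point: composing with $\id + \xi_2$ shrinks the domain of definition, so the natural target lives on a smaller neighborhood, i.e.\ a later step of the direct system. After this domain bookkeeping I would verify that the restriction of each operation to a fixed step is a $\K$-analytic \emph{and bounded} map between Banach spaces of bounded analytic functions (analyticity of composition operators and the analytic inverse function theorem being classical in the Banach setting), the boundedness being the crucial extra hypothesis of Theorem~A; then Theorem~A upgrades these to analyticity on the LB-space. The same scheme applied to the conjugations $c_g$, for arbitrary $g \in \DiffGermK$, shows that each conjugation is analytic near the identity.

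Finally I would assemble these local data into a global Lie group structure. Declaring the maps $\phi \circ L_g^{-1}$, for $g$ ranging over $\DiffGermK$, to be charts yields an atlas; the chart changes reduce to the substitution operations above and are therefore analytic, and the analyticity of multiplication and inversion in these charts follows from the previous step. By the standard local-to-global criterion for Lie groups this produces a unique analytic Lie group structure on $\DiffGermK$ modelled on the compactly regular LB-space $\GermK$, as required.
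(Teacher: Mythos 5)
Your overall architecture (displacement chart $\eta\mapsto\eta-\id$, composition operators on Banach spaces of bounded analytic functions, boundedness on each step, Theorem~A, then the local-to-global criterion) is the same as the paper's. But there is a genuine gap in your argument for compact regularity, which is exactly the point where the infinite-dimensional case departs from the known finite-dimensional one. You propose to show that the restriction operators $\HolbK{n}\rightarrow\HolbK{n+m}$ are \emph{compact} via Cauchy estimates plus a Montel-type argument. This fails when $X$ is an infinite-dimensional Banach space: equicontinuity alone does not give relative compactness of the image of the unit ball, because the values themselves need not lie in a relatively compact subset of $X$. Concretely, if $(e_k)_k$ is a bounded sequence in $X$ with no convergent subsequence and $f$ is a fixed nonzero bounded scalar analytic function on $U_n$ vanishing on $K$, then $\gamma_k:=f\cdot e_k$ is a bounded sequence in $\HolbK{n}$ whose restrictions to $U_{n+m}$ have no convergent subsequence (evaluate at any point where $f\neq 0$). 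So the linking maps are not compact, $\GermK$ is not a Silva space, and the "compact linking maps $\Rightarrow$ boundedly retractive $\Rightarrow$ Hausdorff and compactly regular" chain collapses. This is precisely why the paper cannot reuse the topologies of the finite-dimensional treatment: it instead verifies Wengenroth's weaker criterion (Proposition~\ref{prop_compact_regularity}(i)) directly, showing by a quantitative two-scale estimate (splitting the Taylor series at a cutoff degree $k_0$ and using Lemma~\ref{lemma_absolute_family}) that all steps $E_l$ with $l\geq 6n$ induce the same topology on the unit ball of $E_n$; Hausdorffness is proved separately by the injective continuous map $\gamma\mapsto(\gamma^{(k)}(a))_{a\in K,\,k\in\N}$ into a product of Banach spaces. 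You need to replace your Montel argument by something of this kind.

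The remainder of your outline is essentially the paper's proof and is workable, though two points deserve more care than "domain bookkeeping." First, for the composition map Theorem~A requires the domain to be a union of $R$-balls for a \emph{single} $R$ in an ascending sequence of normed spaces with contractive bonding maps; since the admissible shrinking of domains depends on a bound for $\opnorm{D\xi_2}$, the paper must use two different norms (sup-norm on the left factor, $\Dnorm{\cdot}=\supnorm{\gamma'}$ on the right factor), write $\GermK\times\GermK$ as a union over $R$ of sets $\GermK\times\Omega_R$, and pass to a cofinal reindexing $l_n=(R+1)(n+2)$. Second, for inversion the paper does not invoke an inverse function theorem on the LB-space directly but applies the Banach implicit function theorem stepwise to $h_n(\gamma_1,\gamma_2)=(\gamma_1+\id)\circ(\gamma_2+\id)-\id$, together with a quantitative Lipschitz inverse function theorem to control onto which $U_{12n}$ the inverse germ is defined; some such quantitative control is unavoidable to land in a definite step of the limit.
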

If $X$ is of finite dimension, this is known and can be found in \cite{MR2310802}. If in addition $K=\smset{0}$, the Lie group structure was first constructed by Pisanelli in \cite{MR0436234}.

Theorem~A also allows us to construct Lie groups which can be written as an ascending union of Banach Lie groups. In Section \ref{sec_UNION} we will prove the following result:

\begin{theoremC}[Unions of Banach Lie groups]						\label{thm_UNION}
Let $G_1\subseteq G_2 \subseteq \cdots$ be analytic Banach Lie groups over $\K\in\{\R,\C\}$, such that all inclusion maps $\smfunc{i_n}{G_n}{G_{n+1} }$ are analytic group homomorphisms. Set $G:= \bigcup_{n\in\N}G_n$. Assume that the following hold:
\begin{itemize}
 \item [(a)] For each $n\in\N$ there is a norm $\nnorm{\cdot}$ on $\g_n:=\L(G)$ defining its topology, such that $\nnorm{[x,y]}\leq \nnorm{x}\nnorm{y}$ for all $x,y\in \g_n$ and such that the bounded operator $\smfunc{\L(i_n)}{\g_n}{\g_{n+1} }$ has operator norm at most $1$.
 \item [(b)] The locally convex direct limit topology on $\g:= \bigcup_{n\in\N} \g_n$ is Hausdorff.
 \item [(c)] The map $\exp_G := \smfunc{\bigcup_{n\in\N}\exp_{G_n} }{\g}{G}$ is injective on some $0$-neigh\-bor\-hood
\end{itemize}
Then there exists a unique $\K$-analytic Lie group structure on $G$ which makes $\exp_G$ a local $C^\omega_\K$-diffeomorphism at $0$.
\end{theoremC}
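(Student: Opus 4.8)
The plan is to construct the Lie group structure by declaring $\exp_G$ to be a chart around the identity, spreading it over $G$ by left translations, and checking that the resulting transition maps are analytic; the whole construction rests on the analyticity of the local Baker--Campbell--Hausdorff (BCH) multiplication, which is exactly where Theorem~A enters.

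First I would record the local data coming from the Banach Lie groups $G_n$. Since each $i_n$ is an analytic homomorphism, $\L(i_n)$ intertwines the exponential maps, $\exp_{G_{n+1}}\circ\L(i_n)=i_n\circ\exp_{G_n}$, so that $\exp_G=\bigcup_n\exp_{G_n}$ is well defined on $\g$. Hypothesis~(a), the submultiplicativity $\nnorm{[x,y]}\le\nnorm{x}\nnorm{y}$ with uniform constant $1$, guarantees that on a ball $B_r^{\g_n}(0)$ with an explicit radius $r$ independent of $n$ the BCH series converges absolutely and defines an analytic map $*_n\colon B_r^{\g_n}(0)\times B_r^{\g_n}(0)\to\g_n$ with $\exp_{G_n}(x)\exp_{G_n}(y)=\exp_{G_n}(x*_n y)$. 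Because $\L(i_n)$ has operator norm at most $1$ (again (a)), the radii are respected by the bonding maps and the $*_n$ are mutually compatible, so they glue to a single map $*\colon\Omega\times\Omega\to\g$ on an open $0$-neighborhood $\Omega$ of the LB-space $\g$.

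The heart of the argument is to show that $*$ is $\K$-analytic, and this is precisely the situation Theorem~A is designed for. Taking $E=\g\times\g$ (the LB-space $\bigcup_n\g_n\times\g_n$ with the maximum norm, whose bonding maps then also have operator norm $\le 1$ and whose limit is $\g\times\g$, Hausdorff by (b) since finite products commute with countable direct limits here), $F=\g$, and $f=*$, each restriction $*_n$ is analytic by the previous step, and the classical BCH estimates combined with submultiplicativity show that $*_n$ maps a product of small balls into a bounded subset of $\g_n$, hence into a bounded subset of $\g$; thus each $*_n$ is bounded and Theorem~A delivers analyticity of $*$. In the case $\K=\R$ one complexifies first: each $\g_n$ carries a complexification with a submultiplicative norm, the BCH series extends complex-analytically, and the complex Theorem~A applied to $\g_\C=\bigcup_n(\g_n)_\C$ shows the complexified multiplication is $\C$-analytic, whence $*$ is $\R$-analytic. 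I expect this analyticity step --- and in particular the verification of the boundedness and compatibility hypotheses of Theorem~A, together with the complexification bookkeeping in the real case --- to be the main obstacle.

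It then remains to promote this local multiplication to a genuine Lie group structure. Inversion in exponential coordinates is the linear, hence analytic, map $x\mapsto -x$, since $\exp_G(x)^{-1}=\exp_G(-x)$. For conjugation, fix $g\in G_m$; on the cofinal tail $(\g_n)_{n\ge m}$ the adjoint $\Ad(g)$ restricts to the bounded linear automorphisms $\Ad_{G_n}(g)$, each analytic and bounded on balls, so a further application of Theorem~A (after complexifying when $\K=\R$) shows $\Ad(g)\colon\g\to\g$ is analytic, and hence the conjugation $c_g$ is analytic near $e$ in exponential coordinates. Hypothesis~(c) makes $\exp_G$ injective on a $0$-neighborhood, so $\exp_G|_\Omega$ is a bijection onto an $e$-neighborhood that can serve as a chart. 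With analytic local multiplication, inversion and conjugation in hand, the standard local-to-global criterion for building an analytic Lie group from compatible local data (with $\g$ as model space) produces a $\K$-analytic Lie group structure on $G$ for which $\exp_G$ is a local $C^\omega_\K$-diffeomorphism at $0$. Uniqueness follows because any two such structures share the chart $\exp_G$ near $e$ and therefore agree on an $e$-neighborhood; since left translations are analytic in any Lie group structure, this forces the two atlases to coincide on all of $G$.
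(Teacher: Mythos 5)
Your proposal is correct and follows essentially the same route as the paper: analyticity of the glued BCH multiplication via Theorem~A on the product LB-space $\bigcup_n(\g_n\times\g_n)$ (with complexification in the real case), followed by the local-data-to-Lie-group criterion with hypothesis~(c) supplying injectivity of $\exp_G$ and conjugation handled on the cofinal tail $(\g_n)_{n\ge m}$. The only cosmetic difference is that for $\Ad(g)$ the paper simply invokes the universal property of the locally convex direct limit (continuity of a linear map on each step suffices), where you invoke Theorem~A again.
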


There are many applications of Theorem~C,
 for example the following theorem to be proved in Section \ref{sec_DIRICHLET}, in which all details of the construction can be found.
\begin{theoremD}[Lie groups associated with Dirichlet series]
 Let $G$ be a complex Banach Lie group with Lie algebra $\g$. Let $\Di{s}{\g}$ be the Banach Lie algebra of $\g$-valued Dirichlet Series which converge absolutely on the complex half plane $\Halfc{s}$ and $\Di{s}{G}$ be the Banach Lie group generated by $\set{\exp_G\circ \gamma}{\gamma\in\Di{s}{\g}}$.
Then the group $\Dinf{G}:=\bigcup_{s\in\R}\Di{s}{G}$ can be made into a Lie group modeled on the direct limit Lie algebra $\Dinf{\g}=\bigcup_{s\in\R}\Di{s}{g}$.
\end{theoremD}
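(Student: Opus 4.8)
The plan is to deduce the result from Theorem~C, so the main task is to exhibit a countable ascending chain of Banach Lie groups whose union is $\Dinf{G}$ and then to verify the hypotheses (a)--(c). First I would reduce the uncountable union over $s\in\R$ to a countable one. Since a $\g$-valued Dirichlet series that converges absolutely on $\Halfc{s}$ also converges absolutely on the smaller half plane $\Halfc{t}$ whenever $s\le t$, one obtains inclusions $\Di{s}{\g}\subseteq\Di{t}{\g}$ and correspondingly $\Di{s}{G}\subseteq\Di{t}{G}$ (a generator $\exp_G\circ\gamma$ of the former is a generator of the latter). As every real $s$ satisfies $s\le n$ for some $n\in\N$, setting $G_n:=\Di{n}{G}$ and $\g_n:=\Di{n}{\g}$ gives an ascending chain with $\Dinf{G}=\bigcup_{n\in\N}G_n$ and $\Dinf{\g}=\bigcup_{n\in\N}\g_n$, and the inclusions $\smfunc{i_n}{G_n}{G_{n+1}}$ are analytic group homomorphisms.

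Next I would fix the norms and the bracket. Writing a Dirichlet series as $\gamma=\sum_{k\in\N}a_k\,k^{-z}$ with $a_k\in\g$, the natural norm on $\Di{s}{\g}$ is $\DiNorm{\gamma}{s}=\sum_{k\in\N}\norm{a_k}_\g\,k^{-s}$, under which $\Di{s}{\g}$ is a weighted $\ell^1$-type Banach space. The pointwise bracket of two such series is again a Dirichlet series whose $m$-th coefficient is the convolution $\sum_{kl=m}[a_k,b_l]$. After rescaling the norm of the Banach Lie algebra $\g$ so that $\norm{[a,b]}_\g\le\norm{a}_\g\norm{b}_\g$ (possible since the bracket of $\g$ is continuous, and a single rescaling works for all $s$), the triangle inequality together with submultiplicativity on $\g$ gives $\DiNorm{[\gamma,\delta]}{s}\le\sum_m m^{-s}\sum_{kl=m}\norm{a_k}_\g\norm{b_l}_\g=\DiNorm{\gamma}{s}\DiNorm{\delta}{s}$, so each $\Di{s}{\g}$ is a Banach Lie algebra with submultiplicative bracket. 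Since $k^{-(n+1)}\le k^{-n}$ for $k\ge1$, the inclusion $\smfunc{\L(i_n)}{\g_n}{\g_{n+1}}$ is norm-nonincreasing and thus has operator norm at most $1$. Taking $\nnorm{\cdot}:=\DiNorm{\cdot}{n}$ on $\g_n$, this verifies hypothesis~(a).

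For hypothesis~(b) I would invoke the coefficient functionals $\gamma\mapsto a_k$: the estimate $\norm{a_k}_\g\le k^{s}\DiNorm{\gamma}{s}$ shows they are continuous on each step, they are manifestly compatible across the chain, hence continuous on the direct limit, and as they separate points the limit topology on $\Dinf{\g}$ is Hausdorff. For hypothesis~(c) I would identify the exponential of $\Di{n}{G}$ with the pointwise map $\gamma\mapsto\exp_G\circ\gamma$; since the $i_n$ are homomorphisms these are compatible and assemble to $\exp_{\Dinf{G}}:=\bigcup_n\exp_{\Di{n}{G}}$ on $\Dinf{\g}$. Using $\supnorm{\gamma}\le\DiNorm{\gamma}{s}$ for $\gamma\in\Di{s}{\g}$ (valid on $\Halfc{s}$, where $\abs{k^{-z}}\le k^{-s}$) and a radius $\rho>0$ with $\exp_G$ injective on $\oBallin{\rho}{\g}{0}\subseteq\g$, the values $\gamma(z),\delta(z)$ stay in $\oBallin{\rho}{\g}{0}$ whenever $\DiNorm{\gamma}{s},\DiNorm{\delta}{s}<\rho$; pointwise injectivity of $\exp_G$ together with the uniqueness of Dirichlet coefficients then yields injectivity of $\exp_{\Dinf{G}}$ on the $0$-neighborhood $\bigcup_n\oBallin{\rho}{\g_n}{0}$ of the limit (a set of the same shape as the domain $U$ of Theorem~A). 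With (a)--(c) established, Theorem~C supplies the unique $\C$-analytic Lie group structure on $\Dinf{G}$ making $\exp_{\Dinf{G}}$ a local $C^\omega_\C$-diffeomorphism at $0$, modelled on $\Dinf{\g}$.

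The genuine obstacle lies not in invoking Theorem~C but in the construction that must precede it, namely showing that $\Di{s}{G}$ really is a Banach Lie group with Lie algebra $\Di{s}{\g}$ and exponential the pointwise $\exp_G\circ(\cdot)$. I would realize $\Di{s}{G}$ as a subgroup of a group of $G$-valued maps on $\Halfc{s}$ and express its multiplication through the Baker--Campbell--Hausdorff series; the crux is that the product of two pointwise exponentials of Dirichlet series is again the pointwise exponential of a Dirichlet series, with the relevant series converging in $\DiNorm{\cdot}{s}$. This is exactly where submultiplicativity of $\DiNorm{\cdot}{s}$ is indispensable: it guarantees convergence of the BCH series inside $\Di{s}{\g}$ and so closes the construction, after which the verification of (a)--(c) and the appeal to Theorem~C proceed as above.
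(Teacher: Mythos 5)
Your proposal follows essentially the same route as the paper: reduce to the countable cofinal chain indexed by $s\in\N$, equip $\Di{s}{\g}$ with the weighted $\ell^1$-norm and the convolution bracket (submultiplicative after a single rescaling of the norm on $\g$), build each Banach Lie group $\Di{s}{G}$ inside the group of $G$-valued maps on $\Halfc{s}$ via the BCH multiplication and the injectivity of $\gamma\mapsto\exp_G\circ\gamma$ near $0$, and then verify hypotheses (a)--(c) of Theorem~C exactly as the paper does (Hausdorffness via the coefficient functionals into $\g^\N$, injectivity of the assembled exponential from $\supnorm{\gamma}\le\DiNorm{\gamma}{s}$ and uniqueness of Dirichlet coefficients). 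The argument is correct and matches the paper's proof in both structure and the identification of the key difficulty.
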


\section{Preliminaries and notation}							\label{sec_PRELIMINARIES}

\subsection*{Analytic mappings between locally convex spaces}

\begin{definition}[Complex analytic mappings]					\label{def_complex_analytic}
 Let $E$ and $F$ be locally convex spaces over $\C$, let $\smfunc{f}{U}{F}$ be a mapping from an open subset $U\subseteq E$ with values in $F$.
 We say that $f$ is \emph{complex analytic} or $C^\omega_\C$ if it is continuous and if it admits locally a power series expansion around each point $a\in U$, which means there exist continuous homogeneous polynomials $p_k$ of degree $k$ such that
\[
 f(x)=\sum_{k=0}^\infty p_k(x-a) 
\]
for all $x$ in a neighborhood of $a$.
\end{definition}

\begin{definition}[Real analytic mappings]
 Let $E$ and $F$ be locally convex spaces over $\R$ and let $E_\C$ and $F_\C$ be their complexifications. A mapping $\smfunc{f}{U}{F}$ from an open subset $U\subseteq E$ with values in $F$ is called \emph{real analytic} or $C^\omega_\R$ if it extends to a complex analytic $F_\C$-valued map on an open
neigh\-bor\-hood of $U$ in the complexification $E_\C$.
\end{definition}

The composition of $\K$-analytic maps (for $\K\in\{\R,\C\}$) is again $\K$-analytic (see \cite[Theorem 6.4]{MR0313811}), therefore we may define $C^\omega$-manifolds and $C^\omega$-Lie groups (see Definition \ref{def_lie_group}).

There is an easy characterization of complex analyticity, that can be found in \cite[Theorem 6.2]{MR0313811} :
\begin{lemma}[Gateaux Analyticity]						\label{lemma_gateaux}
 Let $\smfunc{f}{U\subseteq E}{F}$ be a function defined on some open subset of a complex locally convex space $E$. Then $f$ is $C^\omega_\C$ if and only if it is continuous and \emph{Gateaux analytic}, which means that for every point $a\in U$ and every vector $b\in E$ there exists an $\epsilon>0$ such that the function
\[
 \nnfunc{ \oBallin{\epsilon}{\C}{0}  }{F}{z}{f(a+zb)}
\]
 is complex analytic.
\end{lemma}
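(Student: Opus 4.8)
The plan is to prove the two implications separately; the forward direction is immediate, and the converse carries all the content. If $f$ is $C^\omega_\C$, then it is continuous by definition, and around each $a\in U$ there are continuous homogeneous polynomials $p_k$ with $f(x)=\sum_{k}p_k(x-a)$ on a neighborhood of $a$. Substituting $x=a+zb$ and using homogeneity of $p_k$ yields $f(a+zb)=\sum_k z^k p_k(b)$, a convergent $F$-valued power series in the scalar variable $z$ for small $z$; hence $z\mapsto f(a+zb)$ is complex analytic near $0$, so $f$ is Gateaux analytic.

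For the converse, fix $a\in U$ and choose a balanced convex open neighborhood $V$ of $0$ with $a+V\subseteq U$; then $a+zb\in U$ whenever $b\in V$ and $\abs{z}\le 1$. Since $f$ is continuous, the $F$-valued Cauchy integrals $c_k(b):=\frac{1}{2\pi i}\oint_{\abs{z}=1}\frac{f(a+zb)}{z^{k+1}}\,dz$ exist for $b\in V$, they depend continuously on $b$ (integration of a continuous integrand over the compact circle), and Gateaux analyticity identifies them with the Taylor coefficients of $z\mapsto f(a+zb)$ at $0$. Uniqueness of these coefficients gives the homogeneity $c_k(\lambda b)=\lambda^k c_k(b)$, so each $c_k$ extends to all of $E$ and is the only possible candidate for $p_k$.

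The crux is to show that each $c_k$ is a continuous homogeneous polynomial of degree $k$. Given $b,b'\in E$, the map $g(z,w):=f(a+zb+wb')$ is continuous and, by Gateaux analyticity, separately complex analytic; hence for every $\phi\in F'$ the scalar function $\phi\circ g$ is separately analytic and continuous, and therefore jointly analytic by the finite-dimensional Hartogs theorem. Comparing the Cauchy coefficients in $t$ of $\phi(g(tz,tw))$ and using that $F'$ separates points (Hahn--Banach, as $F$ is Hausdorff), one obtains elements $d_{j,l}\in F$ with $c_k(zb+wb')=\sum_{j+l=k}z^j w^l d_{j,l}$; this is a finite sum, so no completeness of $F$ is needed. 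The analogous identity for several directions, combined with the continuity of $c_k$, lets the polarization formula define a continuous symmetric $k$-linear map whose diagonal is $c_k$. I expect this passage --- upgrading separate analyticity to genuinely polynomial coefficients --- to be the main obstacle, since it rests essentially on Hartogs' theorem together with the reduction to the scalar case via continuous functionals.

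Finally I would establish convergence and the representation. Fixing a continuous seminorm $q$ on $F$, continuity of $f$ at $a$ furnishes a bound $q(f)\le M$ on $a+V$ (after shrinking $V$); the Cauchy estimate then gives $q(c_k(b))\le M$ for $b\in V$, and homogeneity upgrades this to $q(c_k(b))\le 2^{-k}M$ for $b\in\tfrac12 V$. Thus $\sum_k c_k(x-a)$ converges absolutely and $q$-uniformly for $x\in a+\tfrac12 V$. For such $x$, the function $z\mapsto f(a+z(x-a))$ is analytic on $\{\abs{z}<2\}$ and agrees with $\sum_k z^k c_k(x-a)$ near $0$, so by the identity theorem the two agree at $z=1$, giving $f(x)=\sum_k c_k(x-a)$ on the neighborhood $a+\tfrac12 V$. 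Together with the assumed continuity, this is exactly the power-series expansion required by Definition~\ref{def_complex_analytic}, so $f$ is $C^\omega_\C$.
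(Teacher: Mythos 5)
The paper offers no proof of this lemma at all---it is quoted from the literature, namely \cite[Theorem~6.2]{MR0313811}---so there is no in-house argument to compare against; what you have written is essentially a compressed reconstruction of the standard proof from that reference: Cauchy integrals along complex lines, homogeneity of the coefficients from uniqueness of Taylor expansions, a Hartogs/Osgood step to see that the coefficients are polynomial on finite-dimensional slices, polarization, and Cauchy estimates for the convergence of the series. The argument is sound, and you correctly identify and defuse the two places where locally convex generality bites: the possible incompleteness of $F$ (circumvented by realizing every coefficient as a finite linear combination of actual values of $f$ or of $c_k$, so that it lies in $F$ rather than merely in the completion) and the reduction to the scalar case via Hahn--Banach. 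Two details should be made explicit in a full write-up. First, identifying the Cauchy integral over $\abs{z}=1$ with the Taylor coefficient of $z\mapsto f(a+zb)$ at $0$ presupposes that this function is analytic on the \emph{entire} disk $\{\abs{z}<1+\delta\}$, whereas the Gateaux hypothesis as stated only yields analyticity on some small $\oBallin{\epsilon}{\C}{0}$; you must invoke the hypothesis at each intermediate point $a+z_0b$ in the direction $b$ to propagate analyticity across the disk before the circle of integration is legitimate. Second, in the final step the partial sums $\sum_{k\le N}c_k(x-a)$ are a priori only Cauchy (by your seminorm estimates) and weakly convergent to $f(x)$ (by the scalar identity theorem after applying functionals); one should note that a Cauchy sequence in $F$ possessing a weak limit in $F$ converges to that limit in the topology of $F$, so that no completeness assumption is smuggled in. Neither point is a gap in substance, only in exposition.
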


An analytic map between complex \emph{normed} spaces has the property that all \frechet{} derivatives $\smfunc{f^{(k)}}{U}{\Lin^k(E,F)}$ exist and are analytic. The symmetric $k$-linear maps from Definition \ref{def_complex_analytic} can be written as  $\beta_k = \frac{1}{k!}f^{(k)}(a)\in \Lin^k(E,F)$.

In this context of normed spaces there is another useful

\begin{lemma}[Absolute convergence of bounded power series]			\label{lemma_absolute}
 Let $E$ and $F$ be complex normed vector spaces and let $\smfunc{f}{\oBallin{R}{E}{a}}{F}$ be a \emph{bounded} complex analytic map with the following power series expansion: 
\[
  f(a+x) = \sum_{k=0}^\infty \beta_k(x,\ldots,x)
\]
where the $\smfunc{\beta_k}{E^k}{F}$ are continuous symmetric $k$-linear maps.
Then for all $r<\frac{R}{2e}$ the following series converges and can be estimated as shown:
\[
	  \sum_{k=0}^\infty \opnorm{\beta_k} r^k \leq \frac{R}{R-2er} \cdot \maxnorm{f}.
\]
Here $\maxnorm{f}:=\sup\set{\norm{f(x)}}{x\in \oBallin{R}{E}{a}}$ and $e=2.718281828\ldots$.
\end{lemma}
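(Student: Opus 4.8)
The plan is to turn the power-series expansion into Cauchy-type estimates for the coefficients and then sum a geometric series. After translating we may assume $a=0$, so that $\smfunc{f}{\oBallin{R}{E}{0}}{F}$ satisfies $f(x)=\sum_{k\geq0}\beta_k(x,\ldots,x)$, and we abbreviate by $\hat\beta_k(x):=\beta_k(x,\ldots,x)$ the associated continuous homogeneous polynomial of degree $k$. The concrete target is the pointwise coefficient bound $\opnorm{\beta_k}\leq\frac{k^k}{k!}R^{-k}\maxnorm{f}$, from which the claim will follow by an elementary Stirling-type estimate and a geometric series.

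First I would establish a Cauchy estimate for the homogeneous polynomials $\hat\beta_k$. Fixing $x\in E$ with $\norm{x}\leq s<R$, the $F$-valued map $\nnfunc{g}{z}{f(zx)}$ is complex analytic on the disc $\oBallin{1}{\C}{0}$, indeed on $\{|z|<R/s\}$, since $\norm{zx}\leq s<R$ there; substituting into the power series and using homogeneity shows $f(zx)=\sum_{k\geq0}z^k\hat\beta_k(x)$, so the $k$-th Taylor coefficient of $g$ at $0$ is exactly $\hat\beta_k(x)$. Applying any continuous functional $\lambda\in F'$ with $\norm{\lambda}\leq1$ gives a scalar analytic function, and the classical Cauchy inequality on the unit circle yields $\abs{\lambda(\hat\beta_k(x))}\leq\sup_{|z|=1}\abs{\lambda(g(z))}\leq\maxnorm{f}$; taking the supremum over such $\lambda$ via Hahn--Banach gives $\norm{\hat\beta_k(x)}\leq\maxnorm{f}$. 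By homogeneity this says $s^k\norm{\hat\beta_k}\leq\maxnorm{f}$, where $\norm{\hat\beta_k}:=\sup_{\norm{x}\leq1}\norm{\hat\beta_k(x)}$, and letting $s\uparrow R$ we obtain $\norm{\hat\beta_k}\leq R^{-k}\maxnorm{f}$.

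Next I would return from the polynomial to the symmetric multilinear map by means of the polarization formula
\[
 \beta_k(x_1,\ldots,x_k)=\frac{1}{2^k\,k!}\sum_{\epsilon\in\{\pm1\}^k}\epsilon_1\cdots\epsilon_k\,\hat\beta_k(\epsilon_1 x_1+\cdots+\epsilon_k x_k).
\]
For $\norm{x_j}\leq1$ each of the $2^k$ summands is bounded in norm by $\norm{\hat\beta_k}\cdot k^k$, since $\norm{\epsilon_1 x_1+\cdots+\epsilon_k x_k}\leq k$; this gives the polarization inequality $\opnorm{\beta_k}\leq\frac{k^k}{k!}\norm{\hat\beta_k}$, and combining with the Cauchy estimate of the previous step yields the target bound $\opnorm{\beta_k}\leq\frac{k^k}{k!}R^{-k}\maxnorm{f}$.

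Finally I would invoke $\frac{k^k}{k!}\leq e^k$, which follows by comparing $k^k/k!$ with a single term of $e^k=\sum_{n\geq0}k^n/n!$. Hence $\opnorm{\beta_k}\,r^k\leq(er/R)^k\maxnorm{f}$, and for $r<R/(2e)$ (a fortiori $r<R/e$) summing the geometric series gives $\sum_{k\geq0}\opnorm{\beta_k}r^k\leq\frac{R}{R-er}\maxnorm{f}\leq\frac{R}{R-2er}\maxnorm{f}$, which is the asserted estimate. The hard part is the quantitative passage from $\norm{\hat\beta_k}$ to $\opnorm{\beta_k}$: the radius of validity of the final series is dictated entirely by the polarization constant combined with the Stirling estimate, so this is where the bookkeeping must be done with care. (An equivalent route, bounding $\beta_k(x_1,\ldots,x_k)$ directly via the multivariable Cauchy formula on a polydisc of radius $R/k$ inside $\oBallin{R}{E}{0}$, reaches the same coefficient bound and could be used in place of the single-variable argument plus polarization.)
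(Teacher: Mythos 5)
Your proof is correct and follows essentially the same route as the paper's: a Cauchy estimate along complex lines through the origin to bound the homogeneous polynomials $\hat\beta_k$, followed by polarization to pass to $\opnorm{\beta_k}$ and a geometric series. The only differences are cosmetic improvements --- you reduce to scalar functions via Hahn--Banach instead of using the vector-valued Cauchy integral (so you never need $F$ complete), and you prove polarization inline with the sharp constant $\frac{k^k}{k!}$ rather than citing the bound $\frac{(2k)^k}{k!}$ as the paper does, which in fact yields convergence already for $r<R/e$ and a fortiori the stated estimate for $r<R/(2e)$.
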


This lemma turns out to be extremely helpful throughout this paper. In Section \ref{sec_DIFFEO} we will need a further generalization:
\begin{lemma}[Absolute convergence of families of bounded power series]		\label{lemma_absolute_family}
Let $K\subseteq E$ be a nonempty subset of a complex normed vector space $E$. Let $U:=K+\oBallin{R}{E}{0}= \bigcup_{a\in K}\oBallin{R}{E}{a}$ be a union of open balls with fixed radius \hbox{$R>0$}. Now, consider a set $M$ of bounded $C^\omega_\C$-mappings from $U$ to a normed space $F$ such that $\sup_{f\in M} \supnorm{f}<\infty$.
Then we have for all $r<\frac{R}{2e}$ the following estimate:
\[
	  \sum_{k=0}^\infty \sup_{ \substack{f\in M\\a\in K} } \frac{\opnorm{f^{(k)}(a)}}{k!} r^k \leq \frac{R}{R-2er} \cdot \sup_{f\in M} \supnorm{f}.
\]
\end{lemma}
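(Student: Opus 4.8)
The plan is to reduce the family estimate to the single‑function estimate of Lemma~\ref{lemma_absolute}, and more precisely to the \emph{coefficient‑wise} Cauchy bound that underlies it. Fix $f\in M$ and $a\in K$. Since $\oBallin{R}{E}{a}\subseteq U$, the restriction $f|_{\oBallin{R}{E}{a}}$ is a bounded $C^\omega_\C$‑map, and its Taylor expansion about $a$ has $k$‑th symmetric coefficient $\beta_k=\frac{1}{k!}f^{(k)}(a)$. The Cauchy estimate along the complex rays emanating from $a$ bounds the homogeneous polynomial $x\mapsto\beta_k(x,\dots,x)$ by $\maxnorm{f|_{\oBallin{R}{E}{a}}}$ on every ball of radius $<R$, and polarization then converts this into a bound on the multilinear norm, giving
\[
\frac{\opnorm{f^{(k)}(a)}}{k!}=\opnorm{\beta_k}\le\Big(\frac{2e}{R}\Big)^{k}\maxnorm{f|_{\oBallin{R}{E}{a}}}\le\Big(\frac{2e}{R}\Big)^{k}\supnorm{f}.
\]
This is exactly the per‑coefficient inequality whose geometric summation yields Lemma~\ref{lemma_absolute}; I would either quote it directly from the proof of that lemma or re‑derive it in the two lines sketched above.

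The crucial feature is that the right‑hand side is independent of $a$ and depends on $f$ only through $\supnorm{f}\le\sup_{g\in M}\supnorm{g}$. Hence I may pass to the supremum over the whole family \emph{inside} each coefficient,
\[
\sup_{\substack{f\in M\\a\in K}}\frac{\opnorm{f^{(k)}(a)}}{k!}\le\Big(\frac{2e}{R}\Big)^{k}\sup_{f\in M}\supnorm{f},
\]
multiply by $r^{k}$, and sum over $k$. For every $r<\frac{R}{2e}$ the majorant $\sum_{k}(2er/R)^{k}$ is a convergent geometric series with value $\frac{R}{R-2er}$, which produces precisely the claimed estimate.

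The one genuine subtlety — the place where a naive argument breaks down — is the interchange of $\sup$ and $\sum$. One cannot start from the \emph{summed} conclusion $\sum_{k}\opnorm{\beta_k}r^{k}\le\frac{R}{R-2er}\supnorm{f}$ of Lemma~\ref{lemma_absolute} and take suprema term by term, since in general $\sum_{k}\sup$ strictly exceeds $\sup\sum_{k}$ (each family member controls only one tail at a time, so the summed bound alone is not inherited by the supremum). What rescues the argument is that the Cauchy estimate is \emph{uniform}: every coefficient is dominated by the same constant $(2e/R)^{k}$ times $\sup_{f}\supnorm{f}$, so a single geometric series majorizes the supremum series directly. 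Once this uniform coefficient bound is isolated, the remaining steps are the elementary manipulations above; thus the main work lies in extracting the coefficient‑wise form of Lemma~\ref{lemma_absolute}, not in proving any new estimate.
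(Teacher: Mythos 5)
Your proposal is correct and follows essentially the same route as the paper: a Cauchy estimate on one-dimensional complex slices through $a$ gives the bound $\opnorm{\beta_k}\le (2e/R)^k\supnorm{f}$ via polarization of the homogeneous polynomial, after which one takes the supremum over $f\in M$ and $a\in K$ coefficient-wise and sums the geometric series. You also correctly identify the one subtlety (the supremum must be taken per coefficient, not after summation), which is exactly how the paper organizes its argument.
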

This clearly implies Lemma \ref{lemma_absolute} by taking $K:=\smset{a}$ and $M:=\smset{f}$.

\begin{proof}[Proof of Lemma \ref{lemma_absolute_family}]
 Without loss of generality, we may assume that $F$ is a Banach space. Let $f\in M$ and $a\in K$ be given. Let $v\in E$ be a vector of norm $\norm{v}_E =1$. Furthermore let $s<R$ be a fixed number. Then we define the following function
\[
 \func{g}{\oBallin{R}{\C}{0} }{F}{z}{f(a+zv)}
\]
Note that $g$ depends on the choices of $f,a$ and $v$. It is possible to expand $g$ into a power series:
\[
 g(z)=\sum_{k=0}^\infty \frac{1}{k!}f^{(k)}(a)(zv,\ldots,zv)=\sum_{k=0}^\infty \frac{1}{k!}f^{(k)}(a)(v,\ldots,v)\cdot z^k
\]
Using the Cauchy formula (see e.g. \cite[Corollary 3.2]{MR0313811}), we can write the coefficients of this power series as a complex integral:
\[
 \frac{1}{k!}f^{(k)}(a)(v,\ldots,v) = \frac{1}{2\pi i}\int_{\abs{z}=s}\frac{g(z)}{z^{k+1} }dz
\]
Now we can estimate the norm of $f^{(k)}(a)(v,\ldots,v)$:
\begin{align*}
 \norm{f^{(k)}(a)(v,\ldots,v)}_F &\leq k! \norm{\frac{1}{2\pi i}\int_{\abs{z}=s}\frac{g(z)}{z^{k+1} }dz}_F
		\leq k! \frac{1}{2\pi} 2\pi s \frac{\supnorm{g}}{s^{k+1} }
	\\&	\leq \frac{k!}{s^k}\supnorm{f}.
\end{align*}
Since $v$ was arbitrary, this gives us an estimate for the norm of the homogeneous polynomial
\[
 \opnorm{v \mapsto f^{(k)}(a)(v,\ldots,v)  }\leq \frac{k!}{s^k}\supnorm{f}.
\]
By the formula in \cite[Proposition 1.1]{MR0313810} this implies an upper bound for the norm of the corresponding $k$-linear map:
\[
 \opnorm{f^{(k)}(a)}\leq \frac{(2k)^k}{k!}\cdot  \frac{k!}{s^k}\supnorm{f}\leq (2e)^k \frac{k!}{s^k}\supnorm{f}.
\]
Since $s<R$, $a\in K$ and $f\in M$ were arbitrary, we obtain the following:
\[
 \sup_{ \substack{f\in M\\a\in K} }\opnorm{f^{(k)}(a)}\leq (2e)^k \frac{k!}{R^k}\cdot\sup_{f\in M}\supnorm{f}.
\]
Now, we multiply both sides of the inequality by $\frac{r^k}{k!}$ and sum up:
\[
  \sum_{k=0}^\infty \sup_{ \substack{f\in M\\a\in K} } \frac{\opnorm{f^{(k)}(a)} }{k!} r^k 
		\leq \sum_{k=0}^\infty \left(\frac{2er}{R}\right)^k \cdot \sup_{f\in M} \supnorm{f}.
\]
Since $r$ was assumed to be strictly less that $\frac{R}{2e}$, the series $\sum_{k=0}^\infty \left(\frac{2er}{R}\right)^k$ converges to $\frac{1}{1-2er/R}=\frac{R}{R-2er}$. This finishes the proof.
\end{proof}

\subsection*{Generating Lie groups from local data}
The definitions of analytic manifolds and Lie groups are analogous to the finite dimensional case:
\begin{definition}										\label{def_lie_group}
 \begin{itemize}
 \item [(a)] Let $E$ be locally convex $\K$-vector space with $\K\in\smset{\R,\C}$. A \emph{$C^\omega_\K$-manifold modelled on $E$} is a Hausdorff space $M$ together with a maximal set \emph{(atlas)} of homeomorphisms (\emph{charts}) $\smfunc{\phi}{U_\phi}{V_\phi}$ where $U_\phi\subseteq M$ and $V_\phi \subseteq E$ are open subsets, such that the transition maps $\smfunc{\phi \circ \psi^{-1}}{ \psi\left(U_\phi\cap U_\psi\right)  }{\phi\left(U_\phi\cap U_\psi\right)}$ are analytic.
 \item [(b)] Continuous mappings between analytic manifolds are called \emph{analytic} if they are analytic after composition with suitable charts.
 \item [(c)] An \emph{analytic Lie group} is a group which is at the same time an analytic manifold such that the group operations are analytic.
 \end{itemize}
\end{definition}

\begin{proposition}									\label{prop_local_data}	
 Let $\K\in\smset{\R,\C}$. Let $G$ be a group, let $V\subseteq G$ be a subset of $G$ carrying a $C^\omega_\K$-manifold structure. Let $U \subseteq V$ be an open symmetric subset containing $1_G$ such that $U\cdot U \subseteq V$. We assume that
\begin{itemize}
 \item[(i)] the multiplication map $\nnfunc{U\times U}{V}{(x,y)}{x\cdot y}$ is analytic,
 \item[(ii)] the inversion map      $\nnfunc{U}{U}{x}{x^{-1}}$ is analytic,
 \item[(iii)] for every $g\in G$ there is an open subset $W_g\subseteq U$ such that the conjugation map $\nnfunc{W_g}{U}{x}{gxg^{-1}}$ is analytic.
\end{itemize}
Then there exists a unique $C^\omega_\K$-Lie group structure on $G$ such that $U$ is an open subset of $G$ with the given manifold structure.
If $G$ is generated by $U$ then  (i) and (ii) imply (iii).
\end{proposition}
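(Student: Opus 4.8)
The plan is to build the Lie group structure out of left translates of the chart domain $U$ and then to verify the group operations are analytic. For $g\in G$ write $\lambda_g\colon U\to gU$, $x\mapsto gx$, declare the sets $gU'$ (for $g\in G$ and $U'\subseteq U$ open) a basis for a topology on $G$, and take as candidate atlas all maps $\phi\circ\lambda_g^{-1}$ on $gU$, where $\phi$ ranges over the charts of the prescribed structure on $U$. The whole construction rests on one elementary observation: \emph{left translation by a fixed group element is locally analytic.} Indeed, if $a\in G$ and $u_0\in U$ satisfy $au_0\in U$, then for $u$ near $u_0$ we may factor $au=(au_0)(u_0^{-1}u)$; here $u_0^{-1}\in U$ since $U$ is symmetric, so $u\mapsto u_0^{-1}u$ is a partial map of the analytic multiplication in (i) and carries a neighborhood of $u_0$ into $U$ (its value at $u_0$ is $1_G$), while left multiplication by $au_0\in U$ is again a partial map of (i). Hence $u\mapsto au$ is analytic from a neighborhood of $u_0$ into $V$, and it takes values in $U$ wherever $au\in U$.

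First I would use this observation to check that the topology is well defined (each $\lambda_g$ is a homeomorphism onto an open set and the topologies induced on overlaps agree) and that the atlas is a genuine $C^\omega_\K$-atlas: the transition map between $\phi\circ\lambda_g^{-1}$ and $\psi\circ\lambda_h^{-1}$ is, on the level of $U$, the map $u\mapsto \psi\big((h^{-1}g)\,u\big)$ restricted to where $(h^{-1}g)u\in U$, which is analytic by the observation (with $a=h^{-1}g$); its inverse uses $a^{-1}$ and is analytic for the same reason. Hausdorffness follows from the fact that $U$, being an open submanifold of the Hausdorff manifold $V$, is $T_1$: the intersection of all neighborhoods of $1_G$ in $U$ equals $\{1_G\}$, and since $U$ is open these are neighborhoods of $1_G$ in $G$, so $\{1_G\}$ is closed; translation invariance then yields the Hausdorff property. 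By construction $U=1_G\cdot U$ is an open submanifold carrying exactly the prescribed structure.

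Next I would verify that inversion and multiplication are analytic, which is where (ii) and (iii) enter and which I expect to be the main obstacle. For inversion near $g_0$, writing $g=g_0u$ with $u\in U$ near $1_G$, the chart expression of $g\mapsto g^{-1}$ in the charts at $g_0$ and $g_0^{-1}$ is $u\mapsto \phi\big(g_0u^{-1}g_0^{-1}\big)=\phi\big(c_{g_0}(u^{-1})\big)$, a composition of the analytic inversion on $U$ from (ii) and conjugation $c_{g_0}$ near $1_G$ from (iii). For multiplication near $(g_0,h_0)$, writing $g=g_0u$ and $h=h_0v$, the factorization $gh=(g_0h_0)\,c_{h_0^{-1}}(u)\,v$ shows the chart expression to be $(u,v)\mapsto \phi\big(c_{h_0^{-1}}(u)\cdot v\big)$, analytic as a composition of conjugation near $1_G$ from (iii) and the analytic multiplication (i). The delicate point is that (iii) must be invoked on a neighborhood of $1_G$; this is the natural domain, since $c_g(1_G)=1_G$, and it is precisely what makes the two chart expressions defined and analytic near their base points. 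Together with the topological-group properties already established, this exhibits $G$ as a $C^\omega_\K$-Lie group with $U$ as an open submanifold.

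For uniqueness, observe that in \emph{any} Lie group structure on $G$ for which $U$ is an open submanifold carrying the given structure, each left translation $\lambda_g$ is an analytic diffeomorphism (a partial map of the analytic group multiplication), so all charts $\phi\circ\lambda_g^{-1}$ lie in its maximal atlas; since these already cover $G$, that maximal atlas is forced to be the one constructed above. Finally, for the last assertion suppose $G=\generatedby{U}$. Conjugation by a single $u\in U$ is analytic near $1_G$: for $x$ near $1_G$ one has $ux\in U$ and $u^{-1}\in U$, so $c_u(x)=(ux)\,u^{-1}$ is analytic by two applications of (i) with one argument fixed, and $c_u(1_G)=1_G$. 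Writing an arbitrary $g\in G$ as $g=u_1\cdots u_m$ with $u_i\in U$ gives $c_g=c_{u_1}\circ\cdots\circ c_{u_m}$ on a sufficiently small neighborhood of $1_G$, which is therefore analytic; hence (i) and (ii) imply (iii).
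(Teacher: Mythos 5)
The paper does not actually prove Proposition \ref{prop_local_data}; it defers to Bourbaki (Chapter III, \S 1.9, Proposition 18), so your write-up supplies the argument that the citation stands in for, and it is indeed the standard one: transport the manifold structure of $U$ by left translations, check the transition maps via the local analyticity of $u\mapsto au$, and reduce analyticity of multiplication and inversion to (i)--(iii) through the factorizations $gh=(g_0h_0)\,c_{h_0^{-1}}(u)\,v$ and $g^{-1}=g_0^{-1}\,c_{g_0}(u^{-1})$. Two small points deserve attention. First, your Hausdorff argument ($T_1$ plus translation invariance gives $T_2$) quietly uses that $G$ is already a topological group, which in your ordering is only established afterwards; the clean order is to verify continuity of the operations for the constructed topology first (this needs no Hausdorffness) and then separate a point $a\neq 1_G$ from $1_G$, either inside the Hausdorff manifold $U$ if $a\in U$, or, if $a\notin U$, by choosing a symmetric neighborhood $W$ of $1_G$ with $W\cdot W\subseteq U$ so that $W\cap aW=\emptyset$. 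Second, you correctly read (iii) as requiring $W_g$ to be a neighborhood of $1_G$ (as literally stated, $W_g$ could be empty and (iii) vacuous); that is the intended meaning and the one needed for your chart computations. With these understood, the proof is correct, including the final reduction $c_g=c_{u_1}\circ\cdots\circ c_{u_m}$ when $G=\langle U\rangle$, where the symmetry of $U$ lets you write every $g$ as a word in elements of $U$ without separate inverses.
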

\begin{proof}
 See \cite[Chapter III, \S 1.9, Proposition 18]{MR1728312} for the case of a Banach Lie group.
\end{proof}

\begin{corollary}									\label{cor_local_data}	
 Let $\g$ be a locally convex Lie algebra and let $U$ and $V$ be two symmetric $0$-neigh\-bor\-hoods in $\g$ such that the \emph{BCH}-series converges on $U\times U$ and defines a $C^\omega$-map $\smfunc{\ast}{U\times U}{V}$. Let $\smfunc{\Phi}{V}{H}$ be an injective map into a group $H$ such that $\Phi(x*y)=\Phi(x)\cdot \Phi(y)$. Then there exists a unique $C^\omega$-Lie group structure on $G:=\generatedby{\Phi(U)}$ such that $\smfunc{\Phi|_U}{U}{\Phi(U)}$ becomes a diffeomorphism onto an open subset.

Furthermore, the topological isomorphism $\smfunc{T_0\Phi}{\g}{\L(G)}$ is an isomorphism of locally convex Lie algebras and after identifying $\g$ with $\L(G)$, we obtain that $G$ admits an exponential function and we have $\exp_G|_U = \Phi|_U$.
\end{corollary}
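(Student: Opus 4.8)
The plan is to reduce everything to Proposition \ref{prop_local_data} by transporting the manifold structure from $\g$ to $G$ along the injective homomorphic map $\Phi$. First I would record the two algebraic consequences of $\Phi(x\ast y)=\Phi(x)\Phi(y)$ that are needed throughout: evaluating at $x=y=0$ gives $\Phi(0)=\Phi(0)^2$, hence $\Phi(0)=1_G$; and since $x\ast(-x)=0$ for collinear arguments, $\Phi(-x)=\Phi(x)^{-1}$. In particular $\Phi$ maps the symmetric set $U$ to a symmetric subset of $G$ containing $1_G$. Next I would fix, using continuity of $\ast$ at $(0,0)$ and $0\ast 0=0$, a symmetric open $0$-neighborhood $W\subseteq U$ with $W\ast W\subseteq U$. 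Transporting the open-submanifold structure of $U\subseteq\g$ along the bijection $\Phi|_U$ equips $\Phi(U)\subseteq G$ with a $C^\omega_\K$-manifold structure for which $\Phi|_U$ is a diffeomorphism and $\Phi(W)$ is an open submanifold.

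The core step is to apply Proposition \ref{prop_local_data} with its role of $G$ played by $G=\generatedby{\Phi(U)}$, its $V$ by $\Phi(U)$, and its $U$ by $\Phi(W)$. Here $\Phi(W)\subseteq\Phi(U)\subseteq G$, the set $\Phi(W)$ is open, symmetric and contains $1_G$, and $\Phi(W)\cdot\Phi(W)=\Phi(W\ast W)\subseteq\Phi(U)$ by the homomorphism property together with $W\ast W\subseteq U$. Condition (i) holds because, read in the charts $\Phi|_W$ and $\Phi|_U$, the multiplication $(\Phi(x),\Phi(y))\mapsto\Phi(x)\Phi(y)=\Phi(x\ast y)$ is exactly the given $C^\omega$-map $\ast$; condition (ii) holds because inversion reads as $x\mapsto -x$, which is continuous linear, hence analytic. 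For condition (iii) I would argue directly rather than invoking generation: writing an arbitrary $g\in G$ as a word $g=\Phi(u_1)\cdots\Phi(u_m)$ with $u_i\in U$, the conjugation $c_g$ factors as $c_{\Phi(u_1)}\circ\cdots\circ c_{\Phi(u_m)}$, and each $c_{\Phi(u)}$ reads in charts, for $x$ in a sufficiently small $0$-neighborhood, as the analytic map $x\mapsto u\ast x\ast(-u)$; here one uses that $u\ast x\to u\in U$ and $u\ast x\ast(-u)\to 0$ as $x\to 0$ to keep all \BCH{}-products defined and inside $W$. Shrinking the domain so the finite composition is defined yields an open $W_g\subseteq\Phi(W)$ on which $c_g$ is analytic.

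Proposition \ref{prop_local_data} then supplies a unique $C^\omega_\K$-Lie group structure on $G$ for which $\Phi(W)$ is an open submanifold carrying the transported structure. To obtain the stated conclusion about the full set $\Phi(U)$, I would next verify that $\Phi(U)$ is open and that $\Phi|_U$ is a diffeomorphism onto it. For $x\in U$, continuity of $\ast$ gives a $0$-neighborhood $N$ with $x\ast N\subseteq U$; then $\Phi(x)\cdot\Phi(N\cap W)=\Phi\big(x\ast(N\cap W)\big)$ is an open neighborhood of $\Phi(x)$ (left translation is a homeomorphism of the Lie group) contained in $\Phi(U)$, so $\Phi(U)$ is open. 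Comparing the transported structure with the Lie-group submanifold structure on the translates of $\Phi(W)$ shows that $\Phi|_U$ is a $C^\omega_\K$-diffeomorphism onto the open set $\Phi(U)$; uniqueness of this structure follows from the uniqueness clause of Proposition \ref{prop_local_data}, since any structure making $\Phi|_U$ a diffeomorphism onto an open set restricts to the transported one on $\Phi(W)$.

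Finally, for the Lie-algebra and exponential statements I would identify $T_0\Phi\colon\g\to\L(G)$, already a topological isomorphism as the differential of a chart at $1_G$. Reading the group multiplication near $1_G$ in the chart $\Phi|_W$ it becomes the \BCH{}-series $\ast$; since the bracket of $\L(G)$ is the antisymmetrised quadratic term of the multiplication written in a chart, while that quadratic term for $\ast$ is $\tfrac12\brac$, the map $T_0\Phi$ intertwines the two brackets and is therefore an isomorphism of locally convex Lie algebras. Identifying $\g$ with $\L(G)$ along $T_0\Phi$, the collinear additivity $\Phi(sx)\Phi(tx)=\Phi((s+t)x)$ shows that $t\mapsto\Phi(tx)$ is the one-parameter subgroup with initial velocity $x$, so $\exp_G(x)=\Phi(x)$ for all $x\in U$, i.e. $\exp_G|_U=\Phi|_U$. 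The main obstacle I anticipate is the domain bookkeeping that keeps all \BCH{}-products inside the prescribed neighborhoods while checking (iii) and the openness of $\Phi(U)$, together with pinning down the bracket of $\L(G)$ from the second-order term of $\ast$.
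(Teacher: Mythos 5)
Your proposal is correct and takes the paper's intended route: the paper's own proof of this corollary is just the remark that it is an easy consequence of Proposition \ref{prop_local_data}, and your argument carries out exactly that reduction in detail (transporting the chart along $\Phi$, applying the proposition to $\Phi(W)$ with $W\ast W\subseteq U$, then recovering the openness of $\Phi(U)$, the bracket and the exponential). The only point deserving a word of caution is the final step $\exp_G|_U=\Phi|_U$, where your one-parameter-subgroup argument tacitly uses that the segment $[0,1]\cdot x$ stays inside $U$; this is harmless here, since in all of the paper's applications $U$ is balanced, but strictly speaking it is an implicit assumption of the statement itself.
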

\begin{proof}
 This is an easy consequence of Proposition \ref{prop_local_data}
\end{proof}

\subsection*{Locally convex direct limits}						\label{subsec_direct_limit}
Direct limits will only occur in the following situation:
Let $E_1 \subseteq E_2 \subseteq \cdots$ be an increasing sequence of normed $\K$-vector spaces such that all bonding maps $\smfunc{i_n}{E_n}{E_{n+1}}$ are continuous (i.e.{}\ bounded operators). Then we define the locally convex direct limit of the sequence $\seqn{E_n}$ as the union $E:= \bigcup_{n=1}^\infty E_n$ together with the locally convex vector topology in which a convex subset $U$ is a $0$-neighborhood if and only if $U\cap E_n$ is a $0$-neighborhood in $E_n$, for each $n\in\N$.

The locally convex direct limit topology satisfies the following universal property: A linear map $\smfunc{f}{E}{F}$  to a locally convex space $F$ is continuous if and only if every restriction $\smfunc{f|_{E_n} }{E_n}{F}$ is continuous with respect to the topology of $E_n$.

Note: In general a locally convex direct limit of normed spaces need not be Hausdorff.
 In the examples of this paper we can show the Hausdorff property directly by constructing an injective continuous map into a suitable Hausdorff space.

\begin{proposition}[Characterization of zero neighborhoods]				\label{prop_direct_neighborhood}
 Let $\seqn{\delta_n}$ be a sequence of positive real numbers. Then the set
\[	\Vdelta:= \bigcup_{n\in \N} \left(\oBallin{\delta_1}{E_1}{0}+\cdots+\oBallin{\delta_n}{E_n}{0}\right)
 \]
is a $0$-neighborhood of the locally convex direct limit $E:= \bigcup_{n=1}^\infty E_n$. Furthermore, the sets of this type form a basis of $0$-neighborhoods.
\end{proposition}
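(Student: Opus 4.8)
The plan is to prove the two assertions separately, relying throughout on the defining characterization of the direct limit topology quoted above: a \emph{convex} subset of $E$ is a $0$-neighborhood exactly when it meets each $E_n$ in a $0$-neighborhood of $E_n$. The first step is therefore to record that $\Vdelta$ is convex. Writing $V_n := \oBallin{\delta_1}{E_1}{0} + \cdots + \oBallin{\delta_n}{E_n}{0}$, each $V_n$ is a sum of convex sets and hence convex, and since $0$ lies in every ball we have $V_n \subseteq V_{n+1}$. Thus $\Vdelta = \bigcup_n V_n$ is an increasing union of convex sets and is itself convex.

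For the first assertion I would then check that $\Vdelta \cap E_m$ is a $0$-neighborhood of $E_m$ for every $m$. This is immediate: since $0$ belongs to each ball $\oBallin{\delta_k}{E_k}{0}$, keeping only the $m$-th summand gives $\oBallin{\delta_m}{E_m}{0} \subseteq V_m \subseteq \Vdelta$, so $\Vdelta \cap E_m$ contains the open ball $\oBallin{\delta_m}{E_m}{0}$ and is a $0$-neighborhood of $E_m$. By the characterization, the convex set $\Vdelta$ is a $0$-neighborhood of $E$.

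For the second assertion I would start from an arbitrary $0$-neighborhood $W$ of $E$ and, since the limit topology is locally convex, assume without loss of generality that $W$ is convex and contains $0$; the goal is to produce a sequence $(\delta_n)_{n\in\N}$ with $\Vdelta \subseteq W$. Here lies the main obstacle: the naive choice of $\delta_n$ with $\oBallin{\delta_n}{E_n}{0} \subseteq W$ fails, because a sum $x_1 + \cdots + x_n$ of points of $W$ need not lie in $W$. The remedy is to leave geometrically shrinking room. Since $W \cap E_n$ is a $0$-neighborhood of $E_n$, I would choose $\delta_n > 0$ so that $\oBallin{2^n\delta_n}{E_n}{0} \subseteq W$, equivalently $2^n x \in W$ whenever $x \in \oBallin{\delta_n}{E_n}{0}$.

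Then for any $x_k \in \oBallin{\delta_k}{E_k}{0}$ I would write $x_1 + \cdots + x_n = \sum_{k=1}^n 2^{-k}\,(2^k x_k)$, a combination of the points $2^k x_k \in W$ with nonnegative weights $2^{-k}$ summing to $1 - 2^{-n} \leq 1$. Adjoining the point $0 \in W$ with the leftover weight $2^{-n}$ turns this into a genuine convex combination of points of $W$, so convexity of $W$ yields $x_1 + \cdots + x_n \in W$. Hence $V_n \subseteq W$ for every $n$, and therefore $\Vdelta \subseteq W$, which shows that the sets $\Vdelta$ form a basis of $0$-neighborhoods. The only delicate point is this dyadic rescaling argument; everything else is a direct application of the universal characterization of the topology.
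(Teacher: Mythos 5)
Your argument is correct, and it is genuinely self-contained where the paper is not: the paper disposes of this proposition in one line by invoking the fact that $E$ is a quotient of the direct sum $\bigoplus_n E_n$ with the box topology, for which the sets $\Vdelta$ are the images of the standard box neighborhoods. You instead verify everything directly from the stated characterization of the limit topology, and the one nontrivial ingredient --- choosing $\delta_n$ so that $\oBallin{2^n\delta_n}{E_n}{0}\subseteq W$ and then writing $x_1+\cdots+x_n=\sum_{k=1}^n 2^{-k}(2^k x_k)+2^{-n}\cdot 0$ as a convex combination of points of $W$ --- is exactly the right device; it is in fact the argument hidden inside the "well-known" quotient statement. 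Two small points are worth making explicit if you write this up: first, the reduction to convex $W$ uses that the limit topology is locally convex by construction, and the existence of suitable $\delta_n$ then comes from the "only if" direction of the characterization (equivalently, from continuity of each inclusion $E_n\hookrightarrow E$); second, an increasing union of convex sets is convex only because the family is totally ordered under inclusion, which your observation $V_n\subseteq V_{n+1}$ supplies. Your route buys transparency and independence from the direct-sum machinery; the paper's citation buys brevity and the slightly stronger conclusion, not needed here, that $\Vdelta$ is actually open.
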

\begin{proof}
 This is a well-known consequence of the fact that $E$ is a quotient of the direct sum $\bigoplus_n E_n$, equipped with the box topology.
\end{proof}

\begin{proposition}[Compact regularity]							\label{prop_compact_regularity}
 Let $E:= \bigcup_{n=1}^\infty E_n$ be a direct limit of Banach spaces. We assume $E$ is Hausdorff.
 Consider the following statements:
 \begin{itemize}
  \item [(i)]	For every $n\in\N$, there is a $0$-neighborhood $\Omega\subseteq E_n$ and a number $m\ge n$ and such that all the spaces $E_m, E_{m+1}, E_{m+2}, \ldots$ induce the same topology on $\Omega$.
  \item [(ii)]	The sequence $\seqn{E_n}$ is \emph{compactly regular}, i.e. every compact subset in $E$ is also a compact set in some $E_n$.
  \item [(iii)]	The locally convex vector space $E$ is complete.
 \end{itemize}
 Then (i) implies (ii) and (ii) implies (iii).
\end{proposition}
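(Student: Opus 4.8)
My plan is to read hypothesis (i) as a stabilization condition --- it is essentially Retakh's \emph{condition (M)} for the spectrum $(E_n)$ --- and to exploit it through the explicit $0$-neighborhoods $\Vdelta$ of Proposition~\ref{prop_direct_neighborhood}. Since the bonding maps are continuous, for a fixed $E_n$ the topologies induced on it by $E_n, E_{n+1}, \ldots$ form a decreasing chain; (i) says that on some $0$-neighborhood $\Omega \subseteq E_n$ this chain is eventually constant, equal to the norm topology of some $E_m$. The mechanism driving everything is a comparison lemma: by assembling the stabilizing balls from (i) into a neighborhood of the type $\Vdelta$, one shows that the direct-limit topology $\tau_E$, restricted to a suitable set, is recovered from the norm topology of a single Banach step $E_m$. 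Establishing this recovery is the analytic core; the two implications are then largely bookkeeping.

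For \textbf{(i) $\Rightarrow$ (ii)}, fix a compact $K \subseteq E$ and prove two things. First, \emph{containment}: $K \subseteq E_n$ for some $n$. This is the crux and the main obstacle of the whole proposition. I would argue by contradiction: a compact set is totally bounded, so if $K$ escaped every step I could select points $x_j \in K$ lying in strictly increasing steps $E_{p_j}$, and the task becomes to build a \emph{single} $0$-neighborhood $V = \Vdelta$ whose radii $\delta_k$ decrease fast enough --- calibrated against the stabilizing balls from (i) and against the norms $\norm{x_j - x_i}$ in the relevant steps --- that no difference $x_j - x_i$ can be written as a finite sum $\sum_k z_k$ with $\norm{z_k}_{E_k} < \delta_k$; then $x_j - x_i \notin V$ for all $i \neq j$, contradicting total boundedness. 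Carrying out this calibration is the delicate point. Second, \emph{topology matching}: by the comparison lemma, $\tau_E$ and the norm topology of some $E_m \supseteq E_n$ agree on $K$, so $K$, being $\tau_E$-compact, is compact for the Banach topology as well. This is exactly statement (ii).

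For \textbf{(ii) $\Rightarrow$ (iii)} I would reduce completeness to quasi-completeness. Compact regularity of an inductive limit of Banach spaces is equivalent to bounded retractivity (a standard fact from the structure theory of (LB)-spaces), so every closed bounded set $B$ lies in a single Banach step $E_n$ and carries there the $\tau_E$-topology. A Cauchy net in $B$ is then Cauchy for the norm of $E_n$, hence converges to a point of $E_n \subseteq E$, which lies in $B$ because $B$ is closed; thus $E$ is quasi-complete. Since $E$, being an (LB)-space, is a (DF)-space, a theorem of Grothendieck now upgrades quasi-completeness to completeness, giving (iii).

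The two soft inputs are the comparison lemma and the standard equivalences for compactly regular (LB)-spaces; the one genuinely hard step is the neighborhood calibration in the containment half of (i) $\Rightarrow$ (ii).
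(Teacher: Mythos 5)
The paper does not actually prove this proposition: its ``proof'' is a one-line citation of \cite[Theorem 6.4 and corresp. Corollary]{MR1977923}, so your attempt has to be measured against the known proofs (Retakh, Neus, Wengenroth) rather than against an argument in the text. Your architecture is the standard one --- hypothesis (i) is indeed condition $(M_0)$ --- but there is a genuine gap at exactly the point you flag as delicate, and it is not a detail one can defer. To keep $x_j-x_i$ out of $\Vdelta$ you need a lower bound that is uniform over \emph{all} $N$ and \emph{all} decompositions $x_j-x_i=\sum_{k=1}^{N}z_k$ with $\norm{z_k}_{E_k}<\delta_k$, including decompositions whose individual terms $z_k$ live in steps far beyond the one containing $x_j-x_i$ and have no controlled norm there. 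Nothing in the sketch explains how hypothesis (i) --- which only says that the topologies induced by $E_m,E_{m+1},\ldots$ on one ball $\Omega\subseteq E_n$ eventually agree --- converts into such a bound; indeed, the possibility of writing a fixed vector as a sum of pieces that are tiny in ever-later norms is precisely the phenomenon that makes non-regular LB-spaces exist, so ``calibrating $\delta_k$ against the norms $\norm{x_j-x_i}$'' cannot work as stated: those norms are taken in steps over which (i) gives you no leverage. The actual proofs of $(M_0)\Rightarrow$ compact regularity first pass to Retakh's nested condition $(M)$ and deduce sequential retractivity by a separate argument; the separation-of-points device you propose does not appear, and I do not see how to make it close.

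The implication (ii) $\Rightarrow$ (iii) is also essentially circular as written: the ``standard fact'' that compact regularity implies bounded retractivity for LB-spaces is one of the equivalences of the very theorem the paper cites for this proposition, and its proof again runs through $(M)$/$(M_0)$. If you are willing to import that equivalence you might as well import the completeness statement itself. The final step (a quasi-complete (DF)-space is complete, by Grothendieck) is correct and is the standard way to finish. In short: the skeleton matches the literature, but the two load-bearing steps are respectively left unexecuted and borrowed from the theorem being proved.
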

\begin{proof}
 This is the statement of \cite[Theorem 6.4 and corresp. Corollary]{MR1977923}.
\end{proof}
This proposition is relevant for two reasons: First, it is important to know whether the modelling space of a Lie group is complete. Second, we will show in a later work (see \cite{gloecknerdahmen}) that the Lie groups constructed in this paper are regular Lie groups in Milnor's sense and to do so it turns out to be important that they are modelled on a compactly regular direct limit.

\section{Analytic maps on LB-spaces}							\label{sec_LB}	
Now, we prove the main theorem of this paper, namely Theorem~A, stated in the introduction:

\begin{proof}[Proof of Theorem~A]
We start with some simplifications:
Since $f$ restricted to $\oBallin{R}{E_n}{0}$ is analytic and the intersection of each one dimensional affine subspace with $U$ is locally contained in 
some $\oBallin{R}{E_n}{0}$, the function $f$ is clearly Gateaux analytic.
By Lemma \ref{lemma_gateaux} it remains to show the continuity of $f$. The range space $F$ is locally convex and can therefore be embedded in a product of Banach spaces. We now use that a function into a product is continuous if and only if the projection onto each factor is continuous.
Therefore we can assume without loss of generality that $F$ is a Banach space.
Let $p\in U$ be given. It remains to show continuity of $f$ at $p$. Since $p\in\bigcup_{n\in \N} \oBallin{R}{E_n}{0}$, there is an index $m$ such that $p\in \oBallin{R}{E_m}{0}$. We may assume that $m=1$ since omitting only a finite number of spaces does not change the direct limit. Since $\oBallin{R}{E_1}{0}$ is open, there is an open ball $\oBallin{R'}{E_1}{p}\subseteq\oBallin{R}{E_1}{0}$. Then $\oBallin{R'}{E_n}{p}\subseteq \oBallin{R}{E_n}{0}$ for all $n\in\N$, using that $\opnorm{i_n}\leq 1$.
Now, we may restrict $f$ to this smaller subset and without loss of generality, we may assume that $R'=R$ and $p=0$. Therefore we only have to show continuity of $f$ at $0$. Furthermore, we may assume that $f(0)=0$ which can be obtained by a translation in $F$ which is clearly continuous. Let $r$ be a fixed positive real number strictly less that $\frac{R}{2e}$.
\newcommand{\bigsuminfty}{\sum_{\substack{\vec{j}\in\N^k}}}
\newcommand{\bigsumleqn}{\sum_{\substack{\vec{j}\in\N^k\\ \vec{j}\leq n}}}
\newcommand{\bigsummaxn}{\!\!\sum_{\substack{\vec{j}\in\N^k\\ \max \vec{j}= n}}}
\newcommand{\bigsumleqm}{\sum_{\substack{\vec{j}\in\N^k\\ \vec{j}\leq m}}}
\newcommand{\bigsummaxm}{\!\!\sum_{\substack{\vec{j}\in\N^k\\ \max \vec{j}= m}}}

We know that each $\func{f_n}{\oBallin{R}{E_n}{0}}{F}{x}{\sum_{k=1}^\infty \beta_{k,n}(x,\cdots,x)}$ is $C^\omega$ and bounded between normed complex vector spaces. Then by Lemma \ref{lemma_absolute}, we have the estimate:
\[
	\sum_{k=1}^\infty \opnorm{\beta_{k,n}} r^k \leq \frac{R}{R-2er}\maxnorm{f_n} =:S_n.		 
\]
Now, let $a_j:= \frac{r}{2^j}$ which implies that $r=\sum_{j=1}^\infty a_n$. Then we have for every $n\in\N$:
\begin{align*}
	S_n 	&\geq	\sum_{k=1}^\infty \opnorm{\beta_{k,n}} r^k	
		 =	\sum_{k=1}^\infty \opnorm{\beta_{k,n}} \left(\sum_{j=1}^\infty a_j\right)^k	
	\\&	 =	\sum_{k=1}^\infty \opnorm{\beta_{k,n}}\! \bigsuminfty a_{j_1} a_{j_2} \cdots a_{j_k}	\tag{$*$}\label{eqn_estimate}
\end{align*}
Let $\epsilon>0$ be given. We set $b_n := \min\left(1,\frac{\epsilon}{2^n\cdot S_n}\right)$ and $\delta_n := a_n\cdot b_n$. By construction it is clear that $\delta_n\leq a_n$ which will be used later.

To show continuity of $f$ at $0$, it suffices to show that the $0$-neighborhood $\Vdelta\subseteq E$ as defined in Proposition \ref{prop_direct_neighborhood} is a subset of the domain of $f$ and is mapped by $f$ into $\oBallin{\epsilon}{F}{0}$.

Let $x\in\Vdelta$. This means that there is a number $m\in\N$ such that $x=\sum_{j=1}^m x_j$ with $\norm{x_j}_{E_j}<\delta_j$.
We can estimate the $E_m$-norm of $x$ by 
\begin{align*}
			\biggnorm{\sum_{j=1}^m x_j}_{E_m}
		\leq 	\sum_{j=1}^m \norm{x_j}_{E_m} 
		\leq 	\sum_{j=1}^m \norm{x_j}_{E_j}
		< 	\sum_{j=1}^m \delta_j
		\leq 	\sum_{j=1}^m a_j
		<	r<R.
\end{align*}
So, $x\in \oBallin{R}{E_m}{0}\subseteq\bigcup_{n\in \N} \oBallin{R}{E_n}{0}$ which is the domain of $f$. So it makes sense to evaluate $f(x)$:
\begin{align*}
       f(x)   	&	=	      f_m(x)    
			= 	      \sum_{k=1}^\infty \beta_{k,m}\left(x,\ldots, x\right)   		
		   	=	      \sum_{k=1}^\infty \beta_{k,m}\left(\sum_{j_1=1}^m x_{j_1},\ldots, \sum_{j_k=1}^m x_{j_k}\right)   		
		\\&	=	      \sum_{k=1}^\infty \bigsumleqm \beta_{k,m}\left(x_{j_1},\ldots, x_{j_k}\right)   		
		   	=	      \sum_{k=1}^\infty \sum_{n=1}^m \bigsummaxn \beta_{k,n}\left(x_{j_1},\ldots, x_{j_k}\right)   		
\end{align*}
Note, that we used in the last line that $\beta_{k,m}$ and $\beta_{k,n}$ coincide when the arguments are elements of $E_n$.
Now, we can estimate the norm:
\begin{align*}
       \norm{f(x)}_F   	&	=	\Biggnorm{\sum_{k=1}^\infty \sum_{n=1}^m  \bigsummaxn \beta_{k,n}\left(x_{j_1},\ldots, x_{j_k}\right)}_F
			\\&	\leq	\sum_{k=1}^\infty \sum_{n=1}^m  \bigsummaxn \opnorm{\beta_{k,n}} \norm{x_{j_1}}_{E_n}\cdots \norm{x_{j_k}}_{E_n}
			\\&	\leq	\sum_{n=1}^m\sum_{k=1}^\infty \bigsummaxn \opnorm{\beta_{k,n}} \norm{x_{j_1}}_{E_{j_1}}\cdots \norm{x_{j_k}}_{E_{j_k}}
			\\&	\leq	\sum_{n=1}^m\sum_{k=1}^\infty \bigsummaxn \opnorm{\beta_{k,n}} \delta_{j_1}\cdots \delta_{j_k}
\end{align*}
One of the factors $\delta_{j_1}\cdots \delta_{j_k}$ is equal to $\delta_n= a_n\cdot b_n$, all the others can be estimated by the corresponding $a_j$:
\begin{align*}
       \norm{f(x)}_F   	&	\leq	\sum_{n=1}^m\sum_{k=1}^\infty \bigsummaxn \opnorm{\beta_{k,n}} a_{j_1}\cdots a_{j_k}\cdot b_n
			\\&	\leq	\sum_{n=1}^m b_n  \sum_{k=1}^\infty \opnorm{\beta_{k,n}}\!\!\bigsuminfty a_{j_1} a_{j_2} \cdots a_{j_k}
			   	\stackrel{\hbox{by (\ref{eqn_estimate}) } }{\leq}	\sum_{n=1}^m b_n \cdot S_n
			\\&	\leq	\sum_{n=1}^m \frac{\epsilon}{2^n\cdot S_n} \cdot S_n
			   	<   	\epsilon.
\end{align*}
This finishes the proof.
\end{proof}
Although this result explicitly needs that $\K=\C$, the following easy consequence also holds in the real case:

\begin{corollary}[Continuity of polynomials]						\label{cor_POLYNOMIAL}
 Let $\K\in\smset{\R,\C}$. A polynomial function defined on the direct limit $E=\bigcup_{n\in\N}E_n$ of normed $\K$-vector spaces $E_1\subseteq E_2 \subseteq \cdots$ with values in a locally convex space is continuous if and only if it is continuous on each step.
\end{corollary}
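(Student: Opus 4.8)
The statement has a trivial and a substantial implication. For the trivial one, recall that by the universal property of the direct limit each inclusion $\smfunc{i_n}{E_n}{E}$ is continuous; hence if $f$ is continuous on $E$, then so is every restriction $f|_{E_n}=f\circ i_n$. The real content is the converse, and the plan is to derive it from Theorem~A.

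Before invoking Theorem~A I would adjust the norms. Theorem~A requires $\opnorm{i_n}\le 1$, while here the bonding maps are merely bounded, say $c_n:=\opnorm{i_n}<\infty$. Putting $d_n:=\max(c_n,1)$ and rescaling the norm on $E_n$ by the decreasing sequence $\lambda_n$ determined by $\lambda_1=1$ and $\lambda_{n+1}=\lambda_n/d_n$ yields equivalent norms for which $\opnorm{i_n}\le 1$. Since scaling a norm by a positive constant leaves the topology of each $E_n$ unchanged, it leaves the direct limit topology on $E$ unchanged as well, so from now on I may assume $\opnorm{i_n}\le 1$.

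For the complex case $\K=\C$, continuity is a local property, so it suffices to prove continuity at an arbitrary point $p\in E$. Choosing $m$ with $p\in E_m$, discarding the finitely many spaces not containing $p$ (which does not affect the direct limit), and translating the argument by $p$ -- a translate of a polynomial is again a polynomial of the same degree -- I reduce to a polynomial $g$ that is continuous on each $E_n$ and whose continuity at $0$ is to be shown. Being a polynomial, $g$ is $\C$-analytic; being of finite degree and continuous on each $E_n$, its restriction to $\oBallin{R}{E_n}{0}$ is a finite sum of continuous, hence bounded, homogeneous polynomials and is therefore bounded. Fixing any $R>0$, the hypotheses of Theorem~A are thus satisfied on $U:=\bigcup_{n\in\N}\oBallin{R}{E_n}{0}$, and the theorem yields that $g$ is $\C$-analytic, in particular continuous (Definition~\ref{def_complex_analytic}), on $U$. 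Since $U$ contains the $0$-neighborhood $\Vdelta$ for any choice of $\delta_j>0$ with $\sum_j\delta_j<R$ (by the estimate in Proposition~\ref{prop_direct_neighborhood}, using $\opnorm{i_n}\le 1$), the point $0$ is interior to $U$, and continuity of $g$ on $U$ gives continuity of $g$ at $0$, as required.

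It remains to treat $\K=\R$, which I expect to be the only genuinely delicate point, since Theorem~A is complex. Here I would complexify everything: form $E_\C=\bigcup_{n\in\N}(E_n)_\C$ and $F_\C$, and extend $f$ to the complex polynomial $\smfunc{f_\C}{E_\C}{F_\C}$ obtained by complexifying the symmetric multilinear maps attached to $f$. Continuity of $f$ on $E_n$ is equivalent to boundedness of those multilinear maps, which is equivalent to boundedness of their complexifications, so $f_\C$ is continuous on each $(E_n)_\C$; the complex case just proved then makes $f_\C$ continuous on $E_\C$. Finally, the inclusion $\smfunc{\iota}{E}{E_\C}$ is continuous (by the universal property, since each $E_n\hookrightarrow(E_n)_\C\hookrightarrow E_\C$ is), and $F$ carries the subspace topology of $F_\C$, so $f=f_\C\circ\iota$ is continuous as a map $E\to F$. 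The crux of the whole argument is thus the passage between the real and complex pictures: one must check that continuity survives both complexification of the coefficients and restriction back to the real subspaces, with the finiteness of the degree being exactly what guarantees the boundedness hypothesis needed to apply Theorem~A.
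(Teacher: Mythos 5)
Your proof is correct and follows essentially the same route as the paper: rescale the norms so the bonding maps are contractions, apply Theorem~A to the continuous polynomial restrictions (which are automatically $C^\omega$ and bounded on balls) in the complex case, and complexify for $\K=\R$. The only cosmetic difference is that you get continuity at an arbitrary point by translating the polynomial, whereas the paper establishes continuity on a $0$-neighborhood and then cites a known result that this already forces a polynomial to be continuous everywhere.
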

\begin{proof}
 First consider the case $\K=\C$. Let $\smfunc{f}{E}{F}$ be an $F$-valued polynomial map, defined on the locally convex direct limit $E=\bigcup_{n\in\N}E_n$ of normed spaces $E_n$. It is possible to choose on each $E_n$ an equivalent norm $\norm{\cdot}_{E_n}$ such that the continuous bonding maps 
 $\func{i_n }{E_n}{E_{n+1}}{x}{x}$ have operator norm at most  $1$. Set $R:=1$ and $U:=\bigcup_{n\in \N} \oBallin{R}{E_n}{0}$.
By hypothesis, each $f_n := f|_{\oBallin{R}{E_n}{0}}$ is continuous. A continuous polynomial is automaticly $C^\omega$ and locally bounded, i.e. it maps bounded sets to bounded sets, in particular,  $f_n(\oBallin{R}{E_n}{0})$ is bounded in $F$. Therefore we can directly apply Theorem~A 
and obtain that $f$ is analytic and hence continuous on the $0$-neighborhood $U$.
But for polynomial functions this is enough to guarantee continuity on the whole domain $E$. (see \cite[Theorem 1]{MR0313810})

Let $\K=\R$ now and let $E_\C$, $(E_n)_\C$ and $F_\C$ denote the complexifications of the $\R$-vector spaces $E$, $E_n$ and $F$ respectively. Then every polynomial map $\smfunc{f_n}{E_n}{F}$ can be extended to a complex polynomial map $\smfunc{(f_n)_\C}{E_\C}{F_\C}$. The maps $(f_n)_\C$ are continuous because the maps $f_n$ are so. We now apply the complex case and obtain that $f_\C$ is continuous and hence $f$ is continuous, too.
\end{proof}

\section[Example: Germs of diffeomorphisms]{Example: Germs of diffeomorphisms around a compact set in a Banach space}	\label{sec_DIFFEO}
Throughout this section, let $X$ be a Banach space over $\K$, and let $K\subseteq X$ be a nonempty compact subset.
We are interested in germs of analytic diffeomorphisms around $K$, i.e. we examine  analytic diffeomorphisms $\smfunc{\eta}{U_\eta}{V_\eta}$
where $U_\eta$ and $V_\eta$ are open subsets of $X$, both containing $K$, such that $\eta|_K=\id_K$. We identify two diffeomorphisms if they coincide on an open set $W\subseteq X$, containing $K$. It is easily checked that these equivalence classes of diffeomorphisms form a group with respect to composition. In this section we will turn this group into a Lie group modelled on a compactly regular direct limit of Banach spaces.

We will follow the strategy of \cite{MR2310802} to first consider the case $\K=\C$ and reduce the real case to the complex case.
The topologies used in \cite{MR2310802} do not work when $X$ is infinite dimensional. However, once we have constructed the Lie group structure for $\K=\C$, the proof of the real case can be copied verbatim from \cite[Corollary 15.11]{MR2310802}.

Therefore, from now on,  $\K = \C$.

\subsection*{The modelling space}
\newcommand{\BC}[3]{\ensuremath{\mathrm{BC}^{#3}(#1, #2)}}
\newcommand{\BCo}[3]{ \BC{#1}{#2}{\partial, #3} }
\newcommand{\BCzero}[3]{ \BC{#1}{#2}{#3}_{0}}
\newcommand{\compBC}[2]{\ensuremath{       g_{{\mathrm{BC} }, #1}^{#2}}}
\newcommand{\compNEU}[3]{\ensuremath{       g_{#1}^{#2,#3}}}
\newcommand{\compNEUNEU}[2]{\ensuremath{       g_{#1}^{#2}}}
\newcommand{\comp}[3]{\ensuremath{       g_{{#1}, #2}^{#3}}}
\newcommand{\dA}[4]{
\ensuremath{
d\ifthenelse{ \equal{#1}{} }{}{^{(#1)}}
       #2 \ifthenelse{\equal{#3}{} \and \equal{#4}{} }{}{(#3;#4)}
       }
}

Throughout this section, we will fix the following countable basis of open neighborhoods of $K$:
\[
 U_n := K + \oBallin{\frac{1}{n} }{X}{0} = \bigcup_{a\in K}\oBallin{\frac{1}{n} }{X}{a}.
\]
For every $n\in \N$, we define the following spaces:
\[
 \BC{U_n}{X}{0} := \set{\smfunc{\gamma}{U_n}{X} }{\gamma \hbox{ is continuous and bounded }}
\]
\[
 \HolbK{n} := \set{\smfunc{\gamma}{U_n}{X} }{\gamma \hbox{ is $C^\omega$, bounded and }\gamma|_K=0 }.
\]
It is well-known that $\BC{U_n}{X}{0}$ is a Banach space when equipped with the $\sup$-norm. The space $\HolbK{n}$ is a closed vector subspace of $\BC{U_n}{X}{0}$ and hence becomes a Banach space as well.

For every $k\in\N$ we define
\[
 \BC{U_n}{X}{k}:= \set{\smfunc{\gamma}{U_n}{X} }{\begin{array}{c}
 								 \gamma \hbox{ is $C^\omega$, bounded and the}\\
								 \hbox{ first $k$ \Frechet{} derivates are bounded}

\end{array} }.
\]
This space becomes a Banach space when endowed with the (finite number of) semi norms 
$\left( \gamma\mapsto \supnorm{\gamma^{(l)}} \right)$
where $l\in\smset{0,1,\ldots,k}$ and
$\smfunc{\gamma^{(k)} }{U_n}{\Lin^k(X,X)}$ denotes the $k$-th \Frechet{} derivative of $\gamma$.

The notation $\BC{U_n}{X}{k}$ suggests that the elements need not be analytic but only $k$ times continuously differentiable. However a map between complex Banach spaces which is $C_\C^1$ is automaticly Gateaux analytic (see \cite[Theorem 3.1]{MR0313811}) and continuous and therefore $C_\C^\omega$ by Lemma \ref{lemma_gateaux}.
Therefore the exponent $k$ only refers to the boundedness of the first $k$ derivatives.

\begin{lemma}									\label{lemma_inclusion}
 Let $n\in\N$ and $k\in\smset{0,1,2,\ldots}$. Then the linear operator $\nnfunc{\HolbK{n}}{\BC{U_{n+1} }{X}{k}}{\gamma}{\gamma|_{U_{n+1} } }$ is continuous.
\end{lemma}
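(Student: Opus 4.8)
The restriction operator is visibly linear, and since $\tfrac{1}{n+1}<\tfrac1n$ we have $U_{n+1}\subseteq U_n$, so $\gamma|_{U_{n+1}}$ is well defined and, as a restriction of a bounded analytic map, is again bounded and $C^\omega_\C$. The real content is therefore twofold: (i) that $\gamma|_{U_{n+1}}$ actually lands in $\BC{U_{n+1}}{X}{k}$, i.e.\ its first $k$ Fr\'echet derivatives are bounded on $U_{n+1}$, and (ii) that each of the finitely many seminorms $\gamma\mapsto\supnorm{\gamma^{(l)}}$, $l=0,\dots,k$, of the target is dominated by the sup-norm on $\HolbK{n}$. Since the topology of $\BC{U_{n+1}}{X}{k}$ is generated by these finitely many seminorms, both points will follow from a single uniform derivative estimate, and continuity reduces to bounding each seminorm by a constant multiple of $\supnorm{\gamma}$. (The hypothesis $\gamma|_K=0$ built into $\HolbK{n}$ plays no role here; only boundedness of $\gamma$ is used.)

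The geometric observation I would exploit is that $U_{n+1}$ lies inside $U_n$ with a positive safety margin: setting $R:=\tfrac1n-\tfrac1{n+1}=\tfrac{1}{n(n+1)}$ one checks $U_{n+1}+\oBallin{R}{X}{0}=U_n$, so every $\gamma\in\HolbK{n}$ is bounded and analytic on a full $R$-ball around each point of $U_{n+1}$. This is exactly the hypothesis of Lemma \ref{lemma_absolute_family}, which I would apply with the nonempty base set taken to be $U_{n+1}$ (in place of the set $K$ there), radius $R$, and the singleton family $M:=\{\gamma\}$; the domain $U_{n+1}+\oBallin{R}{X}{0}$ it requires equals $U_n$, where $\gamma$ lives, and $\sup_{f\in M}\supnorm{f}=\supnorm{\gamma}$ is the norm on $\HolbK{n}$.

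Fixing any $r$ with $0<r<\tfrac{R}{2e}$, the lemma then gives
\[
  \sum_{m=0}^\infty\ \sup_{a\in U_{n+1}}\frac{\opnorm{\gamma^{(m)}(a)}}{m!}\,r^{m}\ \leq\ \frac{R}{R-2er}\,\supnorm{\gamma},
\]
and picking out the single summand $m=l$ for each $l\in\{0,\dots,k\}$ yields
\[
  \supnorm{\gamma^{(l)}}=\sup_{a\in U_{n+1}}\opnorm{\gamma^{(l)}(a)}\ \leq\ \frac{l!}{r^{l}}\cdot\frac{R}{R-2er}\,\supnorm{\gamma}=:C_l\,\supnorm{\gamma},
\]
with $C_l$ depending only on $n$, $l$ and $r$. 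This establishes (i) and (ii) simultaneously, so the restriction operator is bounded (with operator norm at most $\max_{0\le l\le k}C_l$) and hence continuous.

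The step I expect to be the crux is the correct choice of base set in Lemma \ref{lemma_absolute_family}. The tempting alternative --- expanding $\gamma$ around the points of $K$ and propagating its Taylor series out to $U_{n+1}$ --- fails, because the effective radius of convergence of the full symmetric multilinear coefficients is only $\tfrac{1}{2en}$ (the $2e$ stemming from the polarization estimate inside the proof of Lemma \ref{lemma_absolute_family}), which is smaller than the distance $\tfrac{1}{n+1}$ one must travel from $K$ to reach a generic point of $U_{n+1}$. Taking the base set to be $U_{n+1}$ itself sidesteps this entirely, since one then only ever needs the derivatives at the centre of each $R$-ball and never has to move; the positive margin $R=\tfrac{1}{n(n+1)}$ is precisely what keeps the estimate finite.
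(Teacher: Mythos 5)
Your proof is correct and follows essentially the same route as the paper: the paper also sets $R=\tfrac1n-\tfrac1{n+1}$, notes that each $x\in U_{n+1}$ has an $R$-ball around it contained in $U_n$, applies the Cauchy-type estimate of Lemma \ref{lemma_absolute} there, extracts the single summand $l$, and takes the supremum over $x\in U_{n+1}$. Your invocation of Lemma \ref{lemma_absolute_family} with base set $U_{n+1}$ in place of $K$ is just a repackaging of that same computation (the paper itself notes Lemma \ref{lemma_absolute} is the singleton case of Lemma \ref{lemma_absolute_family}), and yields the identical constant $\frac{l!}{r^l}\cdot\frac{R}{R-2er}$.
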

\begin{proof}
 Let $x\in U_{n+1}$ be given. Then there is an $a\in K$ such that $x\in \oBallin{\frac{1}{n+1}}{X}{a}$. Set $R:=\frac{1}{n}-\frac{1}{n+1}=\frac{1}{n(n+1)}$, then $\oBallin{R}{X}{x}\subseteq \oBallin{\frac{1}{n}}{X}{a}\subseteq U_{n}$.
For each $\gamma \in \HolbK{n}$ we obtain a bounded analytic function $\smfunc{\gamma|_{\oBallin{R}{X}{x}} }{\oBallin{R}{X}{x}}{X}$.
We fix a real number $r<\frac{R}{2e}$ and apply Lemma \ref{lemma_absolute} to get the following estimate:
\[
	  \sum_{l=0}^\infty \frac{\opnorm{\gamma^{(l)}(x) } }{l!} r^l \leq \frac{R}{R-2er} \cdot \maxnorm{\gamma}.
\]

In particular we can estimate every summand in the infinite sum by the whole sum and conclude
\[
 	\opnorm{\gamma^{(l)}(x) } \leq \frac{l!}{r^l}\cdot \frac{R}{R-2er} \cdot \maxnorm{\gamma}.
\]
This bound does not depend on the choice of $x\in U_{n+1}$, hence
\[
 	\supnorm{ \gamma^{(l)} |_{U_{n+1}}}=\sup_{x\in U_{n+1}} \opnorm{\gamma^{(l)}(x) }\leq \frac{l!}{r^l}\cdot \frac{R}{R-2er} \cdot \maxnorm{\gamma}.\qedhere
\]
\end{proof}
We also need the space
\[
 \BConeK{n} := \set{\gamma\in\BC{U_{n} }{X}{1} }{ \gamma|_K=0}.
\]
This is a closed subspace of $\BC{U_{n} }{X}{1}$ and therefore becomes a Banach space with the induced topology.
\begin{lemma}									\label{lemma_Dnorm}
 The topology of the Banach space $\BConeK{n}$ is given by the norm
\[
 \Dnorm{\gamma}:=\supnorm{\gamma'}=\sup_{x\in U_n} \opnorm{\gamma'(x)}.
\]
\end{lemma}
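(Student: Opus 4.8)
The plan is to show that $\Dnorm{\cdot}$ is equivalent to the subspace norm that $\BConeK{n}$ inherits from $\BC{U_n}{X}{1}$, which up to equivalence is $\supnorm{\gamma}+\supnorm{\gamma'}$ (the $\BC{U_n}{X}{1}$-topology being given by the two seminorms $\supnorm{\gamma^{(l)}}$, $l\in\smset{0,1}$). One of the two required inequalities is immediate: since $\Dnorm{\gamma}=\supnorm{\gamma'}$ is literally one of the two summands, we have $\Dnorm{\gamma}\le\supnorm{\gamma}+\supnorm{\gamma'}$, so the identity map from the $\BC{U_n}{X}{1}$-norm to $\Dnorm{\cdot}$ is continuous.

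The real content is the reverse estimate, namely bounding $\supnorm{\gamma}$ by a multiple of $\supnorm{\gamma'}$, and this is exactly where the constraint $\gamma|_K=0$ is used. First I would fix $x\in U_n$ and, using the definition $U_n=\bigcup_{a\in K}\oBallin{1/n}{X}{a}$, choose a point $a\in K$ with $\norm{x-a}<\tfrac1n$. Because open balls are convex, the whole segment $\smset{a+t(x-a):t\in[0,1]}$ remains inside $\oBallin{1/n}{X}{a}\subseteq U_n$, so $\gamma$ is defined and (being $C^\omega$, hence $C^1$) continuously differentiable along it. Since $a\in K$ forces $\gamma(a)=0$, the fundamental theorem of calculus gives
\[
\gamma(x)=\gamma(x)-\gamma(a)=\int_0^1 \gamma'\bigl(a+t(x-a)\bigr)(x-a)\,dt,
\]
and passing to norms yields $\norm{\gamma(x)}\le \supnorm{\gamma'}\cdot\norm{x-a}<\tfrac1n\supnorm{\gamma'}$.

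Taking the supremum over all $x\in U_n$ then gives $\supnorm{\gamma}\le\tfrac1n\,\Dnorm{\gamma}$, and hence
\[
\supnorm{\gamma}+\supnorm{\gamma'}\le\bigl(1+\tfrac1n\bigr)\Dnorm{\gamma}.
\]
Combined with the trivial inequality, this shows that $\Dnorm{\cdot}$ and the $\BC{U_n}{X}{1}$-norm are equivalent on $\BConeK{n}$, so $\Dnorm{\cdot}$ indeed defines its topology. I expect no serious obstacle: the only points needing care are that the connecting segment genuinely lies in the domain $U_n$ (which follows from convexity of the balls $\oBallin{1/n}{X}{a}$) and that the fundamental theorem of calculus is available for $X$-valued $C^1$ maps, which it is since $X$ is a Banach space.
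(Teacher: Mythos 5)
Your proof is correct and follows essentially the same route as the paper: the key step in both is the estimate $\norm{\gamma(x)}=\bignorm{\int_0^1\gamma'(a+t(x-a))(x-a)\,dt}\le\frac1n\supnorm{\gamma'}$, valid because $\gamma$ vanishes on $K$ and the segment from $a$ to $x$ stays in the convex ball $\oBallin{1/n}{X}{a}\subseteq U_n$. The only cosmetic difference is that the paper phrases the conclusion as continuity of $\supnorm{\cdot}$ with respect to the seminorm $\Dnorm{\cdot}$ (which then must be a norm generating the Banach topology), whereas you state it as equivalence of $\Dnorm{\cdot}$ with the full $\BC{U_n}{X}{1}$-norm; these are the same argument.
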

\begin{proof}
 By definition, $\Dnorm{\cdot}$ is one of the continuous seminorms generating the topology of $\BConeK{n}$. It sufficed to show that $\supnorm{\cdot}$ is continuous with respect to $\Dnorm{\cdot}$. Then the  seminorm $\Dnorm{\cdot}$ generates a Banach space topology and therefore has to be a norm.

 Let $\gamma\in\BConeK{n}$ and let $x\in U_n = K+ \oBallin{\frac{1}{n} }{X}{0}$.
 Then $x=a+v$ with $a\in K$ and $\Xnorm{v}<\frac{1}{n}$.
 Then 
\begin{align*}
 \Xnorm{\gamma(x)} 	&= 	\Xnorm{\gamma(a+v)}
			= 	\XBiggnorm{\underbrace{\gamma(a)}_{=0} + \int_0^1 \gamma'(a+tv)(v) dt} 
		\\&	\leq	\int_0^1 \opnorm{\gamma'(a+tv)}\Xnorm{v} dt
		   	\leq	\Dnorm{\gamma}\cdot \frac{1}{n}.
\end{align*}
Therefore $\supnorm{\gamma}\leq \frac{1}{n}\Dnorm{\gamma}$ and this finishes the proof.
\end{proof}
Note that this does not work without the assumption that $\gamma|_K=0$.

From now on, the space $\BConeK{n}$ is endowed with the norm $\Dnorm{\cdot}$. In the proof of Lemma \ref{lemma_Dnorm} we have seen that
\[
 \nnfunc{\BConeK{n}}{\HolbK{n}}{\gamma}{\gamma}
\]
is a bounded operator of norm at most $\frac{1}{n}$.

We are now in the following situation: All of the arrows in the following diagram are injective bounded operators between Banach spaces:

\xymatrix{
 \HolbK{n}\ar[r]\ar[dr]		& \HolbK{n+1}\ar[r]\ar[dr]	& \HolbK{n+2}\ar[r]\ar[dr]	&\cdots				\\
 \BConeK{n}\ar[r]\ar[u]		& \BConeK{n+1}\ar[r]\ar[u]	& \BConeK{n+2}\ar[r]\ar[u]	& \cdots
}

\begin{proposition}
 The direct limit 
\[
 \GermK := \bigcup_{n\in\N} \HolbK{n} = \bigcup_{n\in\N} \BConeK{n}
\]
 is Hausdorff and compactly regular.
\end{proposition}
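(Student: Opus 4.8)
The plan is to treat the two assertions separately and to work throughout with the description $\GermK=\bigcup_{n\in\N}\HolbK{n}$; the chain $\bigcup_{n\in\N}\BConeK{n}$ is cofinal in it, so it defines the same direct limit topology and every statement proved for one chain transfers to the space itself. Recall that here $\K=\C$, so each $\gamma\in\HolbK{n}$ is a bounded holomorphic $X$-valued map on $U_n=K+\oBallin{1/n}{X}{0}$ with $\gamma|_K=0$, and an element of $\GermK$ is precisely a germ of such a map around $K$.

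For the Hausdorff property I would follow the strategy announced for the examples of this paper and exhibit an injective continuous linear map from $\GermK$ into a Hausdorff space. The natural target is the product of Banach spaces $H:=\prod_{a\in K}\prod_{k\ge 0}\Lin^k(X,X)$, and the map sends a germ to its family of Taylor coefficients,
\[
 \Phi(\gamma):=\left(\tfrac{1}{k!}\,\gamma^{(k)}(a)\right)_{a\in K,\ k\ge 0}.
\]
This is well defined on germs and linear. Continuity on each step $\HolbK{n}$ is immediate from the Cauchy estimates already used in Lemma \ref{lemma_inclusion}: each coordinate satisfies $\opnorm{\gamma^{(k)}(a)}\le C(k,n)\,\maxnorm{\gamma}$ for $a\in K\subseteq U_{n+1}$, so $\Phi|_{\HolbK{n}}$ is continuous into the product, and by the universal property of the direct limit $\Phi$ is continuous. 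Injectivity is where analyticity enters: if all Taylor coefficients of $\gamma$ at every point of $K$ vanish, then $\gamma$ agrees with its (identically zero) power series on a ball around each $a\in K$, hence vanishes on an open neighborhood of $K$, i.e. represents the zero germ. Since $H$ is a product of Banach spaces it is Hausdorff, and an injective continuous map into a Hausdorff space forces the domain to be Hausdorff.

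For compact regularity I would verify condition (i) of Proposition \ref{prop_compact_regularity} and let the proposition supply (ii). Fix $n$, set $\Omega:=\set{\gamma\in\HolbK{n}}{\maxnorm{\gamma}<1}$ and $m:=n+1$; the claim is that all the sup-norms $\norm{\cdot}_{U_l}:=\sup_{U_l}\Xnorm{\cdot}$ with $l\ge n+1$ induce one and the same topology on $\Omega$. One half is free: restriction to the smaller set $U_{l+1}\subseteq U_l$ only decreases the norm, so $\norm{\cdot}_{U_{l+1}}$ is coarser. The reverse is the heart of the proof and rests on a Hadamard three-circles interpolation. For $x\in U_l$ choose $a\in K$ with $x\in\oBallin{1/l}{X}{a}$, write $x=a+\rho v$ with $\Xnorm{v}=1$ and $\rho<1/l$, and apply the three-circles theorem to $z\mapsto\log\Xnorm{\gamma(a+zv)}$ (a subharmonic function, the vector-valued case reducing to the scalar one via Hahn--Banach) on the disc $\abs{z}<1/n$, using the bounds at radii $1/(l+1)$ and $1/n$. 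Uniformly over $x$ this yields, for all $\gamma\in\Omega$,
\[
 \norm{\gamma}_{U_l}\ \le\ \norm{\gamma}_{U_{l+1}}^{\,s}\qquad\bigl(s=\tfrac{\log(l/n)}{\log((l+1)/n)}\in(0,1)\bigr).
\]
Hence $\norm{\cdot}_{U_{l+1}}$-convergence implies $\norm{\cdot}_{U_l}$-convergence on $\Omega$, the two topologies coincide for every $l\ge n+1$, and condition (i) follows.

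The main obstacle is exactly this interpolation step. Because $X$ is infinite dimensional, Montel's theorem is unavailable, the bonding maps are not compact, and $\GermK$ is not a Silva space; compact regularity therefore cannot be read off from compactness of the bonding operators but must be extracted from the uniform coincidence of the step topologies on bounded sets, which is precisely what the three-circles estimate delivers. Two further points need care: checking that $\log\Xnorm{\gamma(\cdot)}$ is genuinely subharmonic so that the classical theorem applies in the Banach-space-valued setting, and the consistency between the $\HolbK{n}$-chain used above and the interleaved $\BConeK{n}$-chain (handled by cofinality together with the analogous Cauchy and interpolation bounds for $\gamma'$ when one prefers the norm $\Dnorm{\cdot}$).
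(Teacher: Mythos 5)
Your proof of the Hausdorff property is essentially the paper's own argument: the same Taylor--coefficient map into a product of the spaces $\Lin^k(X,X)$, with continuity on each step supplied by the Cauchy estimates behind Lemma \ref{lemma_inclusion} and injectivity by the identity theorem, followed by the universal property of the direct limit. For compact regularity, however, you take a genuinely different route. The paper also verifies condition (i) of Proposition \ref{prop_compact_regularity} with $\Omega$ the unit ball of $\HolbK{n}$, but with $m=6n$: it expands $\gamma_d=\gamma_1-\gamma_2$ into its Taylor series at points $a\in K$, controls the finitely many terms of order $\leq k_0$ through the $E_l$-norm (via Lemma \ref{lemma_absolute_family} applied at level $l$, which is where the factor $(l/n)^{k_0}$ and the choice of $\delta$ come from) and makes the tail uniformly small over $2\Omega$ using the summability statement of that same lemma. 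Your three-circles argument replaces this splitting by a single log-convexity interpolation, yielding the cleaner H\"older-type bound $\norm{\gamma}_{U_l}\leq C\cdot\norm{\gamma}_{U_{l+1}}^{\,s}$ and allowing the sharper choice $m=n+1$; the price is the additional (standard) input that $\log\Xnorm{g}$ is subharmonic for Banach-space-valued holomorphic $g$, whereas the paper stays entirely within the Cauchy-integral machinery it has already set up. Two small repairs are needed in your write-up: the interpolation inequality must be applied to differences $\gamma_1-\gamma_2\in 2\Omega$ rather than to elements of $\Omega$ (comparing the induced topologies on $\Omega$ is a statement about the metrics $\norm{\gamma_1-\gamma_2}$; the a priori bound on the outer circle is then $2$, contributing a harmless factor $2^{1-s}$), and since $U_{l+1}$ and $U_n$ are unions of open balls, the suprema over the circles of radii exactly $\frac{1}{l+1}$ and $\frac{1}{n}$ should be obtained as limits over slightly smaller radii. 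Neither point affects the validity of the argument.
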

\begin{proof}
 To simplify notation, let $E_n := \HolbK{n}$.
 To see that the direct limit is Hausdorff, note that every $\gamma\in \HolbK{n}$ is uniquely determined if we know its power series expansion at each $a\in K$, since $U_n= \bigcup_{a\in K}\oBallin{\frac{1}{n} }{X}{a}$. Therefore the following mappings are injective:
 \[
  \func{\Phi_n}{\HolbK{n} }{\prod_{     \substack{a\in K\\ k\in\N}     } \Lin_c^k(X,X)   }{\gamma}{\left(  \gamma^{(k)}(a)      \right)_{a\in K,k\in \N} }
 \]
 Here, the $k$-linear continuous map $\smfunc{\gamma^{(k)}(a)}{X^k}{X}$ is the $k$-th \Frechet{} derivative and $\Lin_c^k(X,X)$ denotes the Banach space of continuous $k$-linear maps from $X^k$ to $X$. By Lemma \ref{lemma_inclusion}, calculating these derivatives is continuous with respect to the sup-norm, therefore the mappings above are continuous and linear. Since they are compatible with the bonding maps
$\nnfunc{\HolbK{n} }{\HolbK{n+1} }{\gamma}{\gamma|_{U_{n+1} } }$ we can extend these maps to the direct limit and obtain an injective continuous map into a Hausdorff space. This proves that the direct limit is Hausdorff.

To show compact regularity, we want to use Proposition \ref{prop_compact_regularity}. Therefore it suffices to show that
for every $n\in\N$, there is a $0$-neighborhood $\Omega\subseteq E_n$ and a number $m\ge n$ and such that all the spaces $E_m, E_{m+1}, E_{m+2},  \ldots$ 
induce the same topology on $\Omega$.
We need the following constant: $D:= \frac{3}{3-e}\approx 10,6489403$.
Let $n\in\N$ be given. Set $\Omega := \oBallin{1}{E_n}{0}$ and $m:=6n$.

Now we can apply Lemma \ref{lemma_absolute_family} with
 $E=F=X$ and $M=2\Omega=\oBallin{2}{E_n}{0}$, $R=\frac{1}{n}$ and $r=\frac{1}{6n}<\frac{R}{2e}$ and obtain
\[
	  \sum_{k=1}^\infty \underbrace{\sup_{\substack{\gamma\in 2\Omega\\a\in K} }  \frac{\opnorm{\gamma^{(k)}(a)}}{k!} }_{=:s_k}  \left(\frac{1}{6n}\right)^k
		\leq \frac{\frac{1}{n} }{\frac{1}{n} -2e\frac{1}{6n}} \cdot \sup_{\gamma\in 2\Omega} \supnorm{\gamma}= 2D.	\tag{$*$}	\label{eqn_star}
\]

Let $l\geq m$. To show that $E_l$ and $E_m$ induce the same topology on $\Omega$, it remains to prove that the inclusion map $\nnfunc{\Omega\subseteq E_l}{E_m}{\gamma}{\gamma}$ is continuous.
Let $\epsilon>0$ be given. By (\ref{eqn_star}) the series $\sum_{k=1}^\infty s_k \left(\frac{1}{6n}\right)^k$ converges.
Therefore there is a number $k_0\in\N$ such that $\sum_{k>k_0} s_k \left(\frac{1}{6n}\right)^k < \frac{\epsilon}{2}$.
We set $\delta := \frac{1}{D}\left( \frac{n}{l}  \right)^{k_0}\cdot \frac{\epsilon}{2}$. 

Now, let $\gamma_1,\gamma_2\in \Omega$ be two elements with $E_l$-distance $\norm{\gamma_1-\gamma_2}_{E_l}\leq \delta$. 
For the $E_m$-distance, we show that $\norm{\gamma_1-\gamma_2}_{E_m}\leq\epsilon$.
With $\gamma_d := \gamma_1-\gamma_2$, we know that $\gamma_d\in 2\Omega$.
It remains to show that $\norm{\gamma_d}_{E_m}=\sup_{x\in U_m} \Xnorm{\gamma_d(x)}\leq \epsilon$. Therefore let $x\in U_m$ be given.
By definition of $U_m = K + \oBallin{\frac{1}{m} }{X}{0}$, there is an $a\in K$ such that $x\in \oBallin{\frac{1}{m} }{X}{a}$. Now,
\begin{align*}
 \Xnorm{\gamma_d(x)}		&=	\Xnorm{	\sum_{k=1}^\infty \frac{\gamma_d^{(k)}(a)(x-a)  }{k!} 	} 
				\leq	\sum_{k=1}^\infty \frac{ \opnorm{\gamma_d^{(k)}(a)}  }{k!}\left(\frac{1}{m}\right)^k
			\\&	\leq	\sum_{k\leq k_0} \frac{ \opnorm{\gamma_d^{(k)}(a)}  }{k!}\left(\frac{1}{6n}\right)^k
				     +	\sum_{k> k_0} \frac{ \opnorm{\gamma_d^{(k)}(a)}  }{k!}\left(\frac{1}{6n}\right)^k
			\\&	\leq	\sum_{k\leq k_0} \frac{ \opnorm{\gamma_d^{(k)}(a)}  }{k!}\left(\frac{1}{6l}\right)^k\left(\frac{l}{n}\right)^{k_0}
				     +	\sum_{k> k_0} s_k \left(\frac{1}{6n}\right)^k
			\\&	\leq	\left(\frac{l}{n}\right)^{k_0}\sum_{k=1}^\infty \frac{ \opnorm{\gamma_d^{(k)}(a)}  }{k!}\left(\frac{1}{6l}\right)^k
				     +	\frac{\epsilon}{2}
			\\&	\leq	\left(\frac{l}{n}\right)^{k_0}\!\cdot D\cdot \underbrace{\norm{\gamma_d}_{E_l}}_{<\delta}
				     +	\frac{\epsilon}{2}
			   	\leq	\left(\frac{l}{n}\right)^{k_0}\!\cdot D\cdot \frac{1}{D}\left( \frac{n}{l}  \right)^{k_0}\!\cdot \frac{\epsilon}{2}
				     +	\frac{\epsilon}{2} =\epsilon.
\end{align*}
This shows that the space
\[
\GermK =\bigcup_{n\in\N} \HolbK{n}=\bigcup_{n\in\N} \BConeK{n}
\]
is a compactly regular LB-space.
\end{proof}

\subsection*{The monoid}
To turn $\DiffGermK$ into a Lie group modelled on $\GermK$, we first construct an analytic structure on the monoid
\[
 \EndGermK :=  \set{    \smfunc{\eta}{U_\eta}{X}  }{  \begin{array}{c}
 								 \eta \hbox{ is a $C^\omega$-map, $U_\eta$ is an open} \\
								\hbox{ neighborhood of $K$ and $\eta|_K=\id_K$} 

\end{array}
 }/_\sim
\]
 where $\eta_1 \sim \eta_2$ if and only if they coincide on a common neighborhood of $K$.

Since every neighborhood of $K$ contains one of the neighborhoods $U_n$ it suffices to look at analytic maps of the form $\eta=\id_{U_n} + \gamma$ with $\gamma\in \HolbK{n}$ for any $n\in\N$. This implies that the following map is a bijection:

\[
 \Func{\glchart}{\EndGermK}{\GermK}{ \cl{ \gamma + \id_{U_n} } }{\gamma\in \HolbK{n}}
\]

We use the map $\glchart$ as a global chart and define the manifold structure on $\EndGermK$ such that $\glchart$ is a diffeomorphism.

We will use the following result from \cite{ArtikelBoris}:

\begin{lemma}									\label{lemma_boris}
	       Let $X$, $Y$ and $Z$ be normed spaces over $\C$,
	       $U\subseteq X$ and $V\subseteq Y$ open subsets
		and $k \in \smset{0,1,2,\ldots}$.
	       Then the continuous map
	       \[
	               \compNEUNEU{Z}{k} :
	               \BC{V}{Z}{k + 2} \times \BCo{U}{V}{k}
	               \to \BC{U}{Z}{k}
	               : (\gamma,\eta)\mapsto \gamma\circ\eta
	       \]
	       is a $C^\omega$-map with \Frechet{} derivative
	       \begin{equation}\label{Ableitung_Comp_BCellxBC}
	               \left(\compNEUNEU{Z}{k}\right)'(\gamma_0,\eta_0)(\gamma,\eta)
	               = \compNEUNEU{Z}{k}(\gamma,\eta_0)
	               + \compNEUNEU{\Lin(Y,Z)}{k}(\gamma_0',\eta_0) \cdot \eta.
	       \end{equation}
\end{lemma}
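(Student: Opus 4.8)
The plan is to exploit the fact, recalled in this paper just before Lemma \ref{lemma_inclusion}, that a map between complex Banach spaces which is $C^1_\C$ is automatically $C^\omega_\C$ (by \cite[Theorem 3.1]{MR0313811} together with Lemma \ref{lemma_gateaux}). Since the domain $\BC{V}{Z}{k+2}\times\BCo{U}{V}{k}$ is an open subset of a complex Banach space and the target $\BC{U}{Z}{k}$ is a complex Banach space, it suffices to prove that $\compNEUNEU{Z}{k}$ is continuously \Frechet{} differentiable with the stated derivative; analyticity then follows for free. Continuity of the composition map itself is the standing hypothesis (``the continuous map''), and I will invoke it again below.

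First I would verify that the candidate derivative is admissible. Writing $L(\gamma,\eta):=\compNEUNEU{Z}{k}(\gamma,\eta_0)+\compNEUNEU{\Lin(Y,Z)}{k}(\gamma_0',\eta_0)\cdot\eta$, one checks that $L$ is a bounded linear map into $\BC{U}{Z}{k}$: the first summand is linear in $\gamma$ and lands in $\BC{U}{Z}{k}$ by well-definedness of composition, while the second is the pointwise action of $\gamma_0'\circ\eta_0\in\BC{U}{\Lin(Y,Z)}{k}$ on $\eta$, a continuous bilinear operation into $\BC{U}{Z}{k}$. Here $\gamma_0\in\BC{V}{Z}{k+2}$ gives $\gamma_0'\in\BC{V}{\Lin(Y,Z)}{k+1}$, so $\gamma_0'\circ\eta_0$ is indeed well-defined in $\BC{U}{\Lin(Y,Z)}{k}$.

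The core of the argument is the \Frechet{} estimate at a base point $(\gamma_0,\eta_0)$. Using that composition is linear in its first argument, I would decompose the increment as
\[
\compNEUNEU{Z}{k}(\gamma_0+\gamma,\eta_0+\eta)-\compNEUNEU{Z}{k}(\gamma_0,\eta_0)=\gamma\circ\eta_0+\big[(\gamma_0+\gamma)\circ(\eta_0+\eta)-(\gamma_0+\gamma)\circ\eta_0\big].
\]
The first summand is exactly $\compNEUNEU{Z}{k}(\gamma,\eta_0)$. To the bracket I would apply Taylor's theorem with integral remainder to the outer map $\gamma_0+\gamma$ along the segment from $\eta_0(x)$ to $\eta_0(x)+\eta(x)$, isolating its linear term $(\gamma_0+\gamma)'(\eta_0(x))(\eta(x))$. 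Subtracting the second term of $L$ then leaves two genuinely higher-order pieces: a bilinear term $\gamma'(\eta_0)\cdot\eta$ of size $O(\supnorm{\gamma'}\,\norm{\eta})$, and the second-order Taylor remainder of $\gamma_0+\gamma$ along $\eta$, of size $O(\norm{\eta}^2)$. Proving that both are $o(\norm{(\gamma,\eta)})$ \emph{in the $\BC{U}{Z}{k}$-norm} --- that is, uniformly together with all $x$-derivatives up to order $k$ --- is the main obstacle.

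This is where the regularity budget $k+2$ is spent. Differentiating the bracket $k$ times in $x$ via the higher chain rule (Fa\`{a} di Bruno) produces, in the worst term, the $(k+2)$-th derivative of the outer map: up to $k$ derivatives from the chain rule and $2$ from the second-order Taylor remainder. Membership $\gamma_0+\gamma\in\BC{V}{Z}{k+2}$ therefore bounds every arising derivative uniformly on all of $V$, and the combinatorial Fa\`{a} di Bruno sum is controlled by finitely many products of the seminorms $\supnorm{(\gamma_0+\gamma)^{(l)}}$ (for $l\le k+2$) and $\supnorm{\eta_0^{(l)}}, \supnorm{\eta^{(l)}}$ (for $l\le k$); this yields the two higher-order bounds in the $\BC{U}{Z}{k}$-norm. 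For Taylor's theorem to apply along the segment I would use that $\BCo{U}{V}{k}$ is open, so $\eta_0(U)$ lies at a positive distance from $\partial V$ and the whole segment stays inside $V$ once $\norm{\eta}$ is small. Finally, continuity of $(\gamma_0,\eta_0)\mapsto L$ follows from the assumed continuity of the two composition maps occurring in the derivative formula, so $\compNEUNEU{Z}{k}$ is $C^1_\C$; by \cite[Theorem 3.1]{MR0313811} and Lemma \ref{lemma_gateaux} this already upgrades to $C^\omega_\C$, completing the proof.
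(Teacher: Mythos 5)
The paper does not prove Lemma \ref{lemma_boris} at all: it is imported verbatim from the external reference \cite{ArtikelBoris} (``We will use the following result from\ldots''), so there is no in-paper argument to compare yours against. Judged on its own, your sketch is the standard and essentially correct proof of such composition-operator smoothness results, and it correctly identifies the two key mechanisms: linearity of $\compNEUNEU{Z}{k}$ in the first argument reduces everything to a second-order Taylor expansion of the outer map along the increment $\eta$, and the regularity gap $k+2$ versus $k$ is exactly what is consumed when $k$ spatial derivatives (via Fa\`{a} di Bruno) fall on the second-order integral remainder. Your use of the positive distance of $\eta_0(U)$ to $\partial V$ to keep the Taylor segment inside $V$, and the final upgrade from complex differentiability to $C^\omega_\C$ via Gateaux analyticity and Lemma \ref{lemma_gateaux}, both match the paper's own conventions.

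One step is glossed over: continuity of the derivative $(\gamma_0,\eta_0)\mapsto \left(\compNEUNEU{Z}{k}\right)'(\gamma_0,\eta_0)$ in \emph{operator norm} does not formally follow from the ``assumed continuity'' of the composition maps --- joint continuity of $(\gamma,\eta_0)\mapsto\gamma\circ\eta_0$ does not by itself control $\sup_{\norm{\gamma}\leq 1}\bignorm{\gamma\circ\eta_0-\gamma\circ\eta_1}$. You would need the quantitative Lipschitz estimate
\[
 \bignorm{\gamma\circ\eta_0-\gamma\circ\eta_1}_{\BC{U}{Z}{k}}\;\leq\; C\,\norm{\gamma}_{\BC{V}{Z}{k+2}}\,\norm{\eta_0-\eta_1}_{\BC{U}{Y}{k}},
\]
which follows from the same mean-value/Fa\`{a} di Bruno machinery you already invoke (here the extra derivatives of $\gamma$ are again what make the constant uniform over the unit ball). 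Alternatively, you can sidestep the issue entirely: for the conclusion you only need Fr\'echet (indeed Gateaux) complex differentiability at each point together with continuity of $\compNEUNEU{Z}{k}$ itself, since Lemma \ref{lemma_gateaux} then yields $C^\omega_\C$ without any continuity of the derivative. With that point either tightened or removed, the argument is sound.
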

Here, $\BCo{U}{V}{k}$ is the set of all $\gamma\in \BC{U}{Y}{k}$ whose image is contained in $V$ and has a positive distance to the boundary of $V$. It is open in $\BC{U}{Y}{k}$. 

\begin{proposition}									\label{prop_monoid_mult}
 The monoid multiplication of $\EndGermK$:
\[
 \Func{\mu}{\EndGermK\times\EndGermK}{\EndGermK}{(\cl{\eta_1} , \cl{\eta_2} )}{ \cl{\eta1 \circ \eta_2} }
\]
 is $C^\omega$ with respect to the manifold structure defined by $\glchart$.
\end{proposition}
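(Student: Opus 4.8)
The plan is to transport the multiplication through the global chart $\glchart$ and recognize it as a concrete nonlinear map on a product of LB-spaces, to which Theorem~A applies. Writing the two factors as $\eta_i = \id + \gamma_i$ with $\gamma_i\in\HolbK{n}$, one computes
\[
 \eta_1\circ\eta_2 = \id + \gamma_2 + \gamma_1\circ(\id+\gamma_2),
\]
so in the chart the multiplication reads
\[
 \tilde\mu(\gamma_1,\gamma_2) = \gamma_2 + \gamma_1\circ(\id+\gamma_2).
\]
Since $\glchart$ is a diffeomorphism by construction and the summand $(\gamma_1,\gamma_2)\mapsto\gamma_2$ is a continuous linear projection (hence $C^\omega_\C$), it suffices to prove that the composition part $(\gamma_1,\gamma_2)\mapsto\gamma_1\circ(\id+\gamma_2)$ is a $C^\omega_\C$-map $\GermK\times\GermK\to\GermK$.

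First I would observe that $\GermK\times\GermK$ is again an LB-space: it is the locally convex direct limit of the Banach spaces $\BConeK{n}\times\BConeK{n}$ (for a countable direct system the product of the limits agrees with the limit of the products), and equipping each step with the maximum of the two $\Dnorm{\cdot}$-norms keeps every bonding operator of norm at most $1$, by the same restriction estimate used for $\Dnorm{\cdot}$ in Lemma~\ref{lemma_Dnorm}. Analyticity being local, it then suffices to work near an arbitrary base point $(\gamma_1^0,\gamma_2^0)$; after translating it to the origin and discarding the finitely many steps below its index, I would verify the hypotheses of Theorem~A, namely that the restriction of the (translated) composition map to each step $\BConeK{n}\times\BConeK{n}$ is, on a ball of fixed radius $R$ around $0$, $\C$-analytic and bounded with values in the fixed target $\GermK$.

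The per-step analyticity is where Lemma~\ref{lemma_boris} enters. For $\gamma_2$ in the relevant ball, the estimate $\supnorm{\gamma_2|_{U_q}}\le\frac1q\Dnorm{\gamma_2}$ from the proof of Lemma~\ref{lemma_Dnorm} shows that $\id+\gamma_2$ maps a sufficiently deep neighborhood $U_q$ into $U_{n+1}$ at positive distance from the boundary, so that $\id+\gamma_2\in\BCo{U_q}{U_{n+1}}{0}$ (here $\id_{U_q}$ is bounded since $K$ is compact). Meanwhile Lemma~\ref{lemma_inclusion}, applied with $k=2$, realizes $\gamma_1\in\HolbK{n}$ as an element of $\BC{U_{n+1}}{X}{2}$ depending continuously, hence analytically, on $\gamma_1$. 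Feeding these into the composition operator $\compNEUNEU{X}{0}$ of Lemma~\ref{lemma_boris} exhibits $(\gamma_1,\gamma_2)\mapsto\gamma_1\circ(\id+\gamma_2)$ as a composite of analytic maps with values in $\BC{U_q}{X}{0}$. Because $\gamma_1|_K=\gamma_2|_K=0$, the inner map fixes $K$ and the result vanishes on $K$, so it is a bounded analytic function, i.e. an element of the closed sup-norm subspace $\HolbK{q}\subseteq\BC{U_q}{X}{0}$; analyticity therefore corestricts to $\HolbK{q}$ and hence into $\GermK$, while $\supnorm{\gamma_1\circ(\id+\gamma_2)}\le\supnorm{\gamma_1}$ supplies the boundedness on the step. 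Theorem~A then yields $\C$-analyticity of $\tilde\mu$, and since $(\gamma_1^0,\gamma_2^0)$ was arbitrary, $\mu$ is $C^\omega$.

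The main obstacle is the neighborhood/index bookkeeping that glues these pieces together. Composition shrinks domains — the image of $\id+\gamma_2$ overflows the domain $U_n$ of $\gamma_1$ — and Lemma~\ref{lemma_boris} moreover demands two \emph{bounded} derivatives of the outer map, which Lemma~\ref{lemma_inclusion} provides only after passing to the smaller neighborhood $U_{n+1}$. The crucial quantitative input is exactly $\supnorm{\gamma_2|_{U_q}}\le\frac1q\Dnorm{\gamma_2}$: it makes $\id+\gamma_2$ arbitrarily close to the identity on deep neighborhoods, so at each step a \emph{finite} index $q=q(n)$ can be chosen, uniformly over the fixed-radius ball, with $(\id+\gamma_2)(U_q)\subseteq U_{n+1}$ — this is what reconciles the domain shrinkage of composition with the fixed-radius hypothesis of Theorem~A. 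A secondary point is that, as $\EndGermK$ is only a monoid, one cannot translate a general base point to the identity by an inverse; instead one localizes directly at $(\gamma_1^0,\gamma_2^0)$, absorbing the fixed $\gamma_2^0$ into the same smallness budget, the key being that $\id+\gamma_2^0$ still fixes $K$ and is close to the identity on the deep neighborhoods $U_q$.
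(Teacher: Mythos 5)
Your argument is correct and rests on exactly the same analytic core as the paper's proof: the chart computation reducing $\mu$ to $(\gamma_1,\gamma_2)\mapsto\gamma_1\circ(\id+\gamma_2)$, Lemma~\ref{lemma_inclusion} to gain bounded higher derivatives of the outer map on $U_{n+1}$, the estimate $\supnorm{\gamma_2|_{U_q}}\le\frac1q\Dnorm{\gamma_2}$ to force the inner map into $\BCo{U_q}{U_{n+1}}{0}$ for a linearly growing index $q(n)$, Lemma~\ref{lemma_boris} for per-step analyticity, the trivial bound $\supnorm{\gamma_1\circ(\id+\gamma_2)}\le\supnorm{\gamma_1}$, and finally Theorem~A. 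Where you genuinely diverge is in how $\GermK\times\GermK$ is reduced to the fixed-radius balls that Theorem~A requires: you symmetrize the two factors as $\bigcup_n\BConeK{n}\times\BConeK{n}$ with the $\Dnorm{\cdot}$-max norm and localize at an arbitrary base point $(\gamma_1^0,\gamma_2^0)$ by vector-space translation in the chart (correctly noting that group translation is unavailable in a monoid), absorbing $\Dnorm{\gamma_2^0}$ into the constant that determines $q(n)$. The paper instead avoids translation entirely by exploiting an asymmetry: it puts the sup-norm on the first factor, where $\bigcup_n\oBallin{R}{\HolbK{n}}{0}=\GermK$ for \emph{every} $R>0$ (every germ eventually has small sup-norm on deep neighborhoods), keeps the $\Dnorm{\cdot}$-norm on the second factor and writes it as $\bigcup_{R>0}\Omega_R$, then applies Theorem~A once per $R$ to the interleaved steps $\HolbK{n}\times\BConeK{l_n}$ with $l_n=(R+1)(n+2)$. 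Your version buys a more standard ``analyticity is local, so work at a base point'' structure at the cost of carrying the base point through the domain estimates; the paper's version buys a single global application of Theorem~A per radius at the cost of the slightly unusual mixed-norm bookkeeping. Both are valid; just make sure, if you write yours out in full, to record explicitly that the translated inner map $\id+\gamma_2^0+\gamma_2$ still maps $U_{q(n)}$ into $U_{n+1}$ at positive distance from the boundary (your ``smallness budget'' remark), since that is the one place the base point actually enters the estimates.
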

\begin{proof}
Using the global chart $\glchart$, this map becomes
\begin{align*}
 \glchart \circ \mu \circ \left(\glchart\times \glchart\right)^{-1}: &   \GermK\times\GermK \longrightarrow \GermK	\\
			(\gamma_1,\gamma_2) & \mapsto (\gamma_1+\id)\circ(\gamma_2+\id)-\id = \gamma_1\circ(\gamma_2+\id)+\gamma_2.
\end{align*}
To show analyticity of that map, it suffices to show that 
\[
 \Func{f}{\GermK\times \GermK}{\GermK}{(\gamma_1 , \gamma_2)}{\gamma_1\circ(\gamma_2+\id)}
\]
is analytic.

For each $n\in\N$, we set
\begin{align*}
	E_n &:=	\left(\HolbK{n},\supnorm{\cdot}\right), \\
	F_n &:=	\left(\BConeK{n},\Dnorm{\cdot}\right).
\end{align*}
The domain of the map $f$ in question can now be regarded as the following direct limit: $\GermK\times\GermK = \bigcup_{n\in\N} E_n\times F_n$.
For each $R>0$ we set
$\Omega_R := \bigcup_{n\in\N}\oBallin{R}{F_n}{0}$.
One easily checks that
\begin{align*}
	\bigcup_{R>0} \Omega_R			&=	\GermK			\\
	\bigcup_{n\in\N}\oBallin{R}{E_n}{0}	&=	\GermK &\hbox{ for every $R>0$.} 
\end{align*}
Therefore the domain $\GermK\times\GermK$ can be written as a union $\bigcup_{R\in\N} \left( \GermK\times\Omega_R \right)$ of open $0$-neighborhoods. This means that $f$ is analytic on $\GermK\times\GermK$ if and only if $f$ is analytic on each $\GermK\times\Omega_R$.

Let $R>0$ be given, without loss of generality, $R\in\N$.
To simplify notation, we denote the restriction of $f$ to $\GermK\times\Omega_R$ also by $f$.

Now, define $l_n := (R+1)(n+2)\in \N$. Since $\lim_{n\rightarrow\infty}l_n=\infty$, the sequence $\seqn{F_{l_n}}$ is cofinal in $\seqn{F_{n}}$, hence
\[
 \GermK\times\Omega_R 	= \bigcup_{n\in\N}\left( \oBallin{R}{E_n}{0}\times\oBallin{R}{F_{l_n} }{0} \right)
			= \bigcup_{n\in\N} \oBallin{R}{H_n}{0}.
\]
Here, we set $H_n := E_n\times F_{l_n}$ with the norm
\[
 \norm{(\gamma_1,\gamma_2)}_{H_n}:=\max\smset{\maxnorm{\gamma_1},\Dnorm{\gamma_2}}=\max\smset{\maxnorm{\gamma_1},\maxnorm{\gamma_2'}}.
\]
All bonding maps $\smfunc{i_n}{H_n}{H_{n+1}}$ have operator norm at most $1$. We now would like to apply Theorem~A.
 To this end, we define
\[
 \func{f_n}{\oBallin{R}{H_n}{0} }{\GermK}{(\gamma_1,\gamma_2) }{\gamma_1\circ(\gamma_2+\id_{U_{l_n}})}.
\]
We claim:
\begin{itemize}
 \item [(a)] Each $f_n$ makes sense,
 \item [(b)] Each $f_n$ is $C^\omega$,
 \item [(c)] Each $f_n$ is bounded.
\end{itemize}
Once we have this, by Theorem~A
 the map $f$ is analytic, as we had to show.
	(a)
	Let $(\gamma_1,\gamma_2)\in \oBallin{R}{H_n}{0}=\oBallin{R}{E_n}{0}\times\oBallin{R}{F_{l_n} }{0}$. We have to show that $\gamma_1$ and $(\gamma_2+\id_{U_{l_n}})$ can be composed, i.e. that $(\gamma_2+\id_{U_{l_n}})(U_{l_n})\subseteq U_n$.
	In fact, we actually show that $(\gamma_2+\id_{U_{l_n}})(U_{l_n})\subseteq U_{n+2}$.

	Therefore, let $x\in U_{l_n}=K+\oBallin{\frac{1}{l_n} }{X}{0}$ be given. Then $x$ is of the form $x=a+v$ with $a\in K$ and $\Xnorm{v}<\frac{1}{l_n}$.
	Now, we apply $(\gamma_2+\id_{U_{l_n}})$ to $x$:
	\[
	 (\gamma_2+\id_{U_{l_n}})(x)=\gamma_2(a+v)+a+v = a+w
	\]
	with $w:=v+\gamma_2(a+v)$. Now, we estimate the norm of $w$:
	\begin{align*}
	 \Xnorm{w}	&	=	\Xnorm{v+\gamma_2(a+v)}
				\leq	\Xnorm{v}	+ \Xnorm{\gamma_2(a+v)}
			\\&	=	\Xnorm{v}	+ \Xnorm{\gamma_2(a+v)-\gamma_2(a)}
			\\&   	=	\Xnorm{v}	+ \Xnorm{  \int_0^1 \gamma_2'(a+tv)(v) dt  }
			\\&   	<	\frac{1}{l_n}	+ \sup_{t\in[0,1]}\opnorm{\gamma_2'(a+tv)} \Xnorm{v}
			      	\leq	\frac{1}{l_n}	+ \Dnorm{\gamma_2} \Xnorm{v}
			\\&   	\leq	\frac{1}{l_n}	+ R \frac{1}{l_n}
			   	=   	\frac{R+1}{(R+1)(n+2)} = \frac{1}{n+2}.
	\end{align*}
	Therefore $(\gamma_2+\id_{U_{l_n}})(x)\in \oBallin{\frac{1}{n+2} }{X}{a}\subseteq U_{n+2}$.
        (b) 
\newcommand{\OBEN}{\clubsuit}
\newcommand{\UNTEN}{\heartsuit}
\newcommand{\LINKS}{\diamondsuit}
\newcommand{\RECHTS}{\spadesuit}
	The image of $f_n$ is a subset of $\BConeK{l_n}$. The inclusion map: 
	\hbox{$\nnsmfunc{\BConeK{l_n} }{\GermK}$} is continuous linear and therefore $C^\omega$. It remains to show that the arrow $\OBEN$ in the following diagram is $C^\omega$:
\[
\!\!\!\!\!\!\!\!\!\!\!\!\!
	\xymatrix@C=10pt{ \oBallin{R }{H_n }{0 } \ar[rr]_-\OBEN \ar@/^1pc/[rrr]^{f_n}	\ar[d]^\LINKS
						&&	\BConeK{l_n}	\ar@{^{(}->}[d]_\RECHTS	\ar[r] & \GermK			\\
	\BC{U_{n+1} }{X}{3}\times\BCo{U_{l_n}}{U_{n+1}}{1} \ar[rr]^-{\UNTEN}
						&&	\BC{U_{l_n}}{X}{1}
 }
\]
	The space $\BConeK{l_n}$ is a closed subspace of $\BC{U_{l_n}}{X}{1}$ and $\RECHTS$ is a topological embedding.
	Therefore $f_n$ will be $C^\omega$ if we are able to show that $\RECHTS \circ \OBEN$ is so.

	Let $(\gamma_1,\gamma_2)\in \oBallin{R}{H_n}{0}$. Then $\gamma_1$ is complex analytic and bounded on $U_n$.
	We have seen in Lemma \ref{lemma_inclusion} that all derivatives of $\gamma_1$ are bounded when restricting $\gamma_1$ to the smaller set $U_{n+1}$ and that the inclusion $ \nnfunc{\HolbK{n} }{\BC{U_{n+1} }{X}{k} }{\gamma_1}{\gamma_1|_{U_{n+1} }}$ is continuous for every $k\in\N$.
	
	We have just shown in (a) that the image of $(\gamma_2+\id_{U_{l_n}})$ is a subset of $U_{n+2}$, hence it has a positive distance from the boundary of $U_{n+1}$. This means it lies in the space $\BCo{U_{l_n} }{U_{n+1} }{1}$ as defined in Lemma \ref{lemma_boris}. The map
	
	\[
	\func{\LINKS}{\oBallin{R}{H_n}{0} }{\BC{U_{n+1}}{X}{3}\times \BCo{U_{l_n} }{U_{n+1} }{1} }{ (\gamma_1,\gamma_2) }{ (\gamma_1,\gamma_2+\id_{U_{l_n}}) }
	\]
	in the diagram above is therefore well-defined and continuous. Since it is affine, it is automatically analytic.
	
	To make the diagram commutative, we define the remaining arrow as
	\[
	 \Func{\UNTEN}{\BC{U_{n+1}}{X}{3} \times \BCo{U_{l_n} }{U_{n+1}}{1} }{\BC{U_{l_n} }{X}{1} }{(\gamma,\eta) }{\gamma\circ\eta }
	\]
 	and this is $C^1$ by Lemma \ref{lemma_boris} (with $k=l=1$). Since we are dealing with mappings between complex Banach spaces, the $C^1$-property implies complex analyticity. 
       
	(c) 
	Let $(\gamma_1.\gamma_2)\in \oBallin{R}{E_n}{0}\times\oBallin{R}{F_{l_n}}{0}$. Then $f_n(\gamma_1,\gamma_2)=\gamma_1\circ(\gamma_2+\id_{U_{l_n}})$ is an element of $\HolbK{l_n}$ of norm $\maxnorm{\gamma_1\circ(\gamma_2+\id_{U_{l_n}}) }\leq \maxnorm{\gamma_1}<R$. Therefore the image of $f_n$ is a bounded subset of $\HolbK{l_n}$ and hence a bounded subset of the direct limit $\GermK$.

Therefore, by Theorem~A, 
 $f$ is complex analytic and we have shown that $\EndGermK$ is a complex analytic monoid.
\end{proof}

\subsection*{The group}
The monoid $\GermK$ has a $C^\omega$-manifold structure and an analytic multiplication. We now show that the group of invertible elements of the monoid is open and that inversion is analytic.

For the openness, we use a lemma:

\begin{lemma}									\label{lemma_157}
 Let $\gamma\in \BConeK{n}$ with $\Dnorm{\gamma}=\supnorm{\gamma'}<1$. Then $\eta := \id_{U_{6n}}+\gamma|_{U_{6n}}$ is a $C^\omega$-diffeomorphism onto its open image.
\end{lemma}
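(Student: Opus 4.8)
The plan is to show that $\eta$ is (i) injective on $U_{6n}$ and (ii) a local $C^\omega$-diffeomorphism at every point; these two facts together force $\eta$ to be a homeomorphism onto an open set whose inverse is locally, hence globally, $C^\omega$. Analyticity of $\eta$ itself is immediate, since $\gamma$ is $C^\omega$ on $U_n\supseteq U_{6n}$ and $\id_{U_{6n}}$ is affine.

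The crucial ingredient is a displacement estimate. For $x\in U_{6n}$ write $x=a+v$ with $a\in K$ and $\Xnorm{v}<\frac{1}{6n}$. The segment from $a$ to $x$ lies in $\oBallin{\frac{1}{n}}{X}{a}\subseteq U_n$, and since $\gamma(a)=0$ (because $\gamma|_K=0$), the mean value inequality gives
\[
  \Xnorm{\gamma(x)} = \Xnorm{\int_0^1 \gamma'(a+tv)(v)\,dt} \leq \Dnorm{\gamma}\cdot\Xnorm{v} < \frac{1}{6n},
\]
the sharper form on $U_{6n}$ of the bound in Lemma \ref{lemma_Dnorm}. \emph{Injectivity} then follows by a bootstrap: if $\eta(x)=\eta(y)$ for $x,y\in U_{6n}$, then $x-y=\gamma(y)-\gamma(x)$, so $\Xnorm{x-y}<\frac{1}{3n}$. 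Writing $x=a+v$ as above, every point $z$ on the segment $[x,y]$ satisfies $\Xnorm{z-a}\le\Xnorm{x-y}+\Xnorm{v}<\frac{1}{3n}+\frac{1}{6n}<\frac{1}{n}$, so the \emph{whole} segment lies in $U_n$, where $\opnorm{\gamma'}\le\Dnorm{\gamma}<1$. A second application of the mean value inequality yields $\Xnorm{x-y}=\Xnorm{\gamma(x)-\gamma(y)}\le\Dnorm{\gamma}\,\Xnorm{x-y}$, which forces $x=y$ since $\Dnorm{\gamma}<1$. This is precisely where the factor $6$ is needed: it leaves enough room ($\frac{1}{3n}<\frac{5}{6n}$) that any pair identified by $\eta$ is automatically close enough for its connecting segment to stay inside $U_n$, even though $U_{6n}$ is itself far from convex.

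For the \emph{local} structure, note $\eta'(x)=\id_X+\gamma'(x)$ with $\opnorm{\gamma'(x)}\le\Dnorm{\gamma}<1$, so $\eta'(x)$ is invertible via the Neumann series $\sum_{k=0}^\infty(-\gamma'(x))^k$. Since $\eta$ is $C^\omega$ between complex Banach spaces and its derivative is everywhere a topological isomorphism, the classical inverse function theorem makes $\eta$ a local $C^\omega$-diffeomorphism at each point of $U_{6n}$; in particular $\eta$ is an open map, so $\eta(U_{6n})$ is open. Combined with injectivity, $\eta$ is a continuous open bijection onto $\eta(U_{6n})$, hence a homeomorphism, and $\eta^{-1}$ coincides locally with the analytic local inverses supplied by the inverse function theorem, so $\eta^{-1}$ is $C^\omega$. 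Thus $\eta$ is a $C^\omega$-diffeomorphism onto its open image. I expect the injectivity step to be the main obstacle: the derivative bound only controls $\gamma$ along segments contained in the domain, and because $U_{6n}$ is a non-convex union of small balls around $K$, the proof must first invoke the displacement estimate to confine any collision $\eta(x)=\eta(y)$ to a region where a connecting segment genuinely remains in $U_n$.
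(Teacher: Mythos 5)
Your proof is correct and follows essentially the same two-step strategy as the paper: local invertibility of $\eta'(x)=\id_X+\gamma'(x)$ via the inverse function theorem for openness, and injectivity by first confining any pair with $\eta(x)=\eta(y)$ to a single convex ball $\oBallin{\frac{1}{n}}{X}{a}\subseteq U_n$ and then applying a mean-value contraction with $\Dnorm{\gamma}<1$. The only (harmless) variation is in the confinement step: you bound $\Xnorm{x-y}$ directly from the displacement estimate $\supnorm{\gamma|_{U_{6n}}}<\frac{1}{6n}$, whereas the paper routes the same triangle-inequality argument through the base points $a,b\in K$ using $\eta|_K=\id_K$.
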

\begin{proof}
 Let $x\in U_{6n}$. Then the \Frechet{} derivative $\eta'(x)$ at $x$ is an element in the Banach algebra $\BoundOpNorm{X}$. The distance between $\eta'(x)$ and the identity of the algebra is $\opnorm{\eta'(x) - \id_X}= \opnorm{\gamma'(x)}\leq \maxnorm{\gamma'}<1$. Therefore $\eta'(x)\in \oBallin{1}{\BoundOp{X} }{\id_X}\subseteq \GL{X}$. By the Inverse Function Theorem for complex Banach spaces this implies that there is an open neighborhood of $x$ on which $\eta$ is a diffeomorphism onto its open image. Since $x\in U_{6n}$ was arbitrary, we know that the image $\eta(U_{6n})$ is open. To show that $\smfunc{\eta}{U_{6n}}{\eta(U_{6n})}$ is not only a local, but a global diffeomorphism, it remains to show injectivity of $\eta$.

 Let $x,y\in U_{6n}$ with $\eta(x)=\eta(y)$ be given. We have to show that $x=y$.
 This is easy once we have shown that the line segment joining $x$ and $y$ lies in $U_n$.
 By definition of $U_{6n}$, there are elements $a,b\in K$ and $v,w\in X$ such that $\Xnorm{v},\Xnorm{w}<\frac{1}{6n}$ and $x=a+v, y= b+w$.
 Let $[a,x]:=\set{a+tv}{t\in[0,1]} \subseteq \oBallin{\frac{1}{6n} }{X}{a} \subseteq U_{6n}$ denote the compact line segment joining $a$ and $x$. Then
 \begin{align*}
 	\Xnorm{\eta(x)-\eta(a)} &	=	\Xnorm{\int_0^1 \eta'(a+tv)(v)dt}
					\leq	\max_{t\in[0,1]} \Xnorm{\eta'(a+tv)} \cdot \Xnorm{v}
				\\&	\leq	 \underbrace{ \left(  \opnorm{\id}+\opnorm{\gamma'}  \right)}_{<2}\cdot \Xnorm{v}
					<	2\cdot \frac{1}{6n}=\frac{1}{3n}.
 \end{align*}
 Likewise we see that $\Xnorm{\eta(y)-\eta(b)} <\frac{1}{3n}$.
 We can now estimate the distance between the points $a$ and $b$:
 \begin{align*}
  	\Xnorm{a-b}		&	=	\Xnorm{\eta(a)-\eta(b)}
			\\&		\leq	\underbrace{\Xnorm{\eta(a)-\eta(x)} }_{<\frac{1}{3n} }
					+   	\underbrace{\Xnorm{\eta(x)-\eta(y)} }_{=0}
					+   	\underbrace{\Xnorm{\eta(y)-\eta(b)} }_{<\frac{1}{3n} }
					<	\frac{2}{3n}.
 \end{align*}
 This also allows us to estimate the distance between $y$ and $a$:
 \begin{align*}
  	\Xnorm{y-a}		&	\leq	\Xnorm{y-b}+\Xnorm{b-a} < \frac{1}{6n} + \frac{2}{3n}<\frac{1}{n}.
 \end{align*}
 So, $y\in \oBallin{\frac{1}{n} }{X}{a}$. Therefore the two points $x$ and $y$ both lie in the convex set $\oBallin{\frac{1}{n} }{X}{a}$.
 Therefore also the line segment $[x,y]$ lies in $\oBallin{\frac{1}{n} }{X}{a}$ which is a subset of $U_n$, and thus
\begin{align*}
 	0			&	=	\Xnorm{ \eta(x)-\eta(y) }
					=   	\Xnorm{ x-y + \gamma(x) - \gamma(y) }
				\\&	\geq	\Xnorm{x-y} - \Xnorm{\gamma(x) - \gamma(y) }
				\\&   	=	\Xnorm{x-y} - \Xnorm{\int_0^1 \gamma'(y+t(x-y))(x-y) dt}
				\\&	\geq	\Xnorm{x-y} - \supnorm{\gamma'}\Xnorm{x-y}
					=	\Xnorm{x-y} \underbrace{(1 - \supnorm{\gamma'})}_{>0}.
\end{align*}
 Therefore $\Xnorm{x-y}$ has to be zero and so $\smfunc{\eta}{U_{6n}}{X}$ is injective. This finishes the proof.
\end{proof}

\begin{proposition}
 Let $\GermEndK^\times$ denote the group of invertible elements of $\GermEndK$ and let
\[
 \DiffGermK :=  \set{    \eta  }{  \begin{array}{c}
 								 \eta \hbox{ is a $C^\omega$-diffeomorphism between open} \\
								\hbox{ neighborhoods of $K$ and $\eta|_K=\id_K$} 

\end{array}
 }/_\sim,
\]
 where two diffeomorphisms $\eta_1 \sim \eta_2$ are identified if they coincide on a common neighborhood of $K$.
 Then $\DiffGermK=\GermEndK^\times$ and this is an open subset of $\GermEndK$.
\end{proposition}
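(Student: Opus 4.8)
The statement has two parts: the set equality $\DiffGermK = \GermEndK^\times$ and the openness of this set in $\GermEndK$. The plan is to dispose of the equality by a direct germ computation and to derive openness from Lemma~\ref{lemma_157} by a translation argument inside the monoid.

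For the equality I would argue as follows. Recall that the monoid product is composition of germs, $\cl{\eta_1}\cdot\cl{\eta_2}=\cl{\eta_1\circ\eta_2}$, with neutral element $\cl{\id}$. If $\cl{\eta}\in\DiffGermK$, choose a representative diffeomorphism $\smfunc{\eta}{U_\eta}{V_\eta}$ with $\eta|_K=\id_K$; then $\eta^{-1}$ is a $C^\omega$-map fixing $K$ pointwise, so $\cl{\eta^{-1}}\in\GermEndK$ and $\cl{\eta}\cdot\cl{\eta^{-1}}=\cl{\eta^{-1}}\cdot\cl{\eta}=\cl{\id}$, whence $\cl{\eta}\in\GermEndK^\times$. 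Conversely, if $\cl{\eta}\in\GermEndK^\times$ has inverse $\cl{\zeta}$, then $\eta\circ\zeta=\id$ and $\zeta\circ\eta=\id$ hold on some neighborhood of $K$; passing to representatives on a common neighborhood $W$ of $K$ on which both relations hold, $\eta|_W$ is injective, and differentiating $\zeta\circ\eta=\id$ and $\eta\circ\zeta=\id$ at a point $a\in K$ (where $\eta(a)=\zeta(a)=a$) gives $\zeta'(a)\eta'(a)=\id_X=\eta'(a)\zeta'(a)$, so $\eta'(a)\in\GL{X}$. By the Inverse Function Theorem $\eta$ is a local diffeomorphism at each point of $K$, and together with injectivity this makes $\eta$, after a final shrinking of $W$, a $C^\omega$-diffeomorphism onto the open neighborhood $\eta(W)\supseteq K$; thus $\cl{\eta}\in\DiffGermK$. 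This part is routine germ bookkeeping, the only care being the consistent shrinking of domains.

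For openness, write $G:=\GermEndK^\times$ and $M:=\GermEndK$. The first step is to observe, using that the monoid multiplication $\mu$ is $C^\omega$ and in particular continuous (Proposition~\ref{prop_monoid_mult}), that for $g\in G$ the left translation $L_g:=\mu(g,\cdot)\colon M\to M$ is a homeomorphism, with continuous inverse $L_{g^{-1}}$ since $L_{g^{-1}}\circ L_g=L_{g^{-1}g}=\id_M$ and symmetrically. Granting this, it suffices to produce a single open neighborhood $N$ of $\cl{\id}$ with $N\subseteq G$: for arbitrary $g\in G$ the set $g\cdot N=L_g(N)$ is then open, contains $g=g\cdot\cl{\id}$, and lies in $G\cdot G=G$, so every point of $G$ is interior and $G$ is open.

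It remains to construct $N$, which is the heart of the matter. Working in the global chart $\glchart$ (a homeomorphism by construction), under which $\cl{\id}$ corresponds to $0$ and $\GermEndK$ to $\GermK=\bigcup_n\BConeK{n}$, Lemma~\ref{lemma_157} says that any $\gamma\in\BConeK{n}$ with $\Dnorm{\gamma}<1$ yields a $C^\omega$-diffeomorphism $\id_{U_{6n}}+\gamma$ onto an open image, which is a neighborhood of $K$ because $\gamma|_K=0$; hence $\cl{\id+\gamma}\in\DiffGermK=G$. Thus $\glchart(G)$ contains the union $\bigcup_n\oBallin{1}{\BConeK{n}}{0}$ of unit balls. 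The difficulty is that this union is not visibly open in the direct-limit topology, so I would instead invoke Proposition~\ref{prop_direct_neighborhood}: fixing positive reals $\delta_n$ with $\sum_n\delta_n<1$, any $x=\sum_{j=1}^m x_j\in\Vdelta$ with $x_j\in\BConeK{j}$ and $\Dnorm{x_j}<\delta_j$ lies in $\BConeK{m}$ with $\Dnorm{x}\le\sum_{j=1}^m\Dnorm{x_j}<1$, where I used that the bonding maps of the $\BConeK{n}$ have operator norm at most $1$. Hence $\Vdelta\subseteq\bigcup_n\oBallin{1}{\BConeK{n}}{0}\subseteq\glchart(G)$; since $\Vdelta$ is a $0$-neighborhood, its interior gives the required open $N:=\glchart^{-1}(\mathrm{int}\,\Vdelta)\subseteq G$ containing $\cl{\id}$. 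The main obstacle is exactly this passage from the per-step diffeomorphism criterion of Lemma~\ref{lemma_157} to a genuine open set of the LB-space, resolved by the basic-neighborhood description of Proposition~\ref{prop_direct_neighborhood} together with the norm-$1$ bonding condition.
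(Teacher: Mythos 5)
Your proof is correct and follows essentially the same route as the paper: the set equality via the Inverse Function Theorem applied to a representative of an invertible germ, and openness via Lemma~\ref{lemma_157} showing that $\glchart^{-1}\bigl(\bigcup_n\oBallin{1}{\BConeK{n}}{0}\bigr)$ is an identity neighborhood inside the unit group. The only differences are in the level of detail (you spell out the translation argument and justify openness of the union of unit balls via Proposition~\ref{prop_direct_neighborhood}, both of which the paper leaves implicit), not in the approach.
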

\begin{proof}
 If $\eta_1$ is a diffeomorphism between open neighborhoods of $K$, then $\cl{\eta_1}$ is clearly invertible and thus $\GermDiffK\subseteq \GermEndK^\times$.
 If $\cl{\eta_1},\cl{\eta_2}\in \GermEndK^\times$ with $\cl{\eta_1}\circ\cl{\eta_2}=\cl{\id_X}$, then $\eta_1\circ \eta_2|_W = \id_W$ on some open neighborhood $W$ of $K$, whence $\eta_2|_W$ is injective and $\eta_2'(W)\subseteq \GL{X}$. By the Inverse Function Theorem for Banach spaces this implies that $\eta_2$ is a diffeomorphism onto an open neighborhood of $K$ and thus $\GermDiffK \supseteq \GermEndK^\times$.

 The set $U:= \bigcup_{n\in\N}\oBallin{1}{\BConeK{n} }{0}\subseteq  \GermK$ is an open $0$-neigh\-bor\-hood in $\GermK$. Using the global chart, we see that $\glchart^{-1}(U)$ is an open $\cl{\id}$-neighborhood in $\EndGermK$. By Lemma \ref{lemma_157} we know that every $\gamma\in \glchart^{-1}(U)$ is a diffeomorphism onto an open image and thus $\glchart^{-1}(U)\subseteq \DiffGermK=\GermEndK^\times$. 
 Therefore the unit group of the monoid contains an open identity neighborhood, and hence the whole unit group has to be open.
\end{proof}

From Lemma \ref{lemma_157}, we know that the image of $\smfunc{\eta}{U_{6n}}{X}$ is an open neighborhood of $K$ and therefore has to contain one of the basic neighborhoods $U_m$ for an $m\in \N$. The next lemma provides quantitative information:
\begin{lemma}	\label{lemma_12n}
 Let $\gamma\in \BConeK{n}$ with $\Dnorm{\gamma}=\supnorm{\gamma'}\leq \frac{1}{2}$ and let $\eta := \id_{U_{6n}}+\gamma|_{U_{6n}}$ be as in Lemma \ref{lemma_157}. Then the image of $\eta$ contains $U_{12n}$ and we have
		\[
	\supnorm{  \left.  \left(    \left(  \gamma + \id_{U_n} \right){\Big|}_{U_{6n} }  \right)^{-1}\right|_{U_{12n} }  -\id_{U_{12n}}    }\leq \frac{1}{6n}.
		\]
\end{lemma}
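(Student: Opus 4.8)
The map in the statement is exactly $\eta = (\gamma + \id_{U_n})|_{U_{6n}}$, which by Lemma~\ref{lemma_157} is a $C^\omega$-diffeomorphism of $U_{6n}$ onto its \emph{open} image; in particular $\eta^{-1}$ is already defined and continuous on $\eta(U_{6n})$ and $\eta$ is injective. The plan is to establish the two assertions in turn: first that $U_{12n}\subseteq \eta(U_{6n})$, so that $\eta^{-1}|_{U_{12n}}$ makes sense, and then the quantitative bound, which will drop out almost immediately from the identity $\eta^{-1}(y)-y = -\gamma(\eta^{-1}(y))$.

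For the inclusion $U_{12n}\subseteq \eta(U_{6n})$ I would solve the equation $\eta(x)=y$ by the Banach fixed point theorem. Fix $y\in U_{12n}$ and write $y=a+u$ with $a\in K$ and $\Xnorm{u}<\frac{1}{12n}$. Solving $x+\gamma(x)=y$ amounts to finding a fixed point of $T(x):=y-\gamma(x)$, and I would run the contraction on a closed ball $\cBallin{\rho}{X}{a}$ with $2\Xnorm{u}\leq \rho<\frac{1}{6n}$ (such $\rho$ exists since $2\Xnorm{u}<\frac{1}{6n}$). This ball is complete and convex, it is contained in $U_{6n}$, and the segment from $a$ to any of its points stays inside $U_n$, so the mean value estimate for $\gamma$ is available. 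Using $\gamma(a)=0$ (because $a\in K$ and $\gamma|_K=0$) together with $\supnorm{\gamma'}=\Dnorm{\gamma}\leq\frac{1}{2}$ gives $\Xnorm{\gamma(x)}\leq\frac{1}{2}\Xnorm{x-a}\leq\frac{1}{2}\rho$ on the ball, whence $\Xnorm{T(x)-a}\leq\Xnorm{u}+\frac{1}{2}\rho\leq\rho$, so $T$ is a self-map; and $\Xnorm{T(x_1)-T(x_2)}=\Xnorm{\gamma(x_1)-\gamma(x_2)}\leq\frac{1}{2}\Xnorm{x_1-x_2}$ shows $T$ is a $\frac{1}{2}$-contraction. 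The resulting fixed point $x\in U_{6n}$ satisfies $\eta(x)=y$, proving $y\in\eta(U_{6n})$.

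For the norm bound, given $y\in U_{12n}$ I would set $x:=\eta^{-1}(y)\in U_{6n}$. From $\eta(x)=x+\gamma(x)=y$ one gets $\eta^{-1}(y)-y = x-\eta(x) = -\gamma(x)$, so $\Xnorm{\eta^{-1}(y)-y}=\Xnorm{\gamma(x)}$. Writing $x=b+v$ with $b\in K$ and $\Xnorm{v}<\frac{1}{6n}$, the same mean value estimate (again via $\gamma(b)=0$) yields $\Xnorm{\gamma(x)}\leq\frac{1}{2}\Xnorm{v}<\frac{1}{12n}\leq\frac{1}{6n}$. Taking the supremum over $y\in U_{12n}$ gives the claimed inequality (in fact with the better constant $\frac{1}{12n}$).

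I expect the only genuine work to be the first part: setting up the contraction on the correct ball and verifying the self-map property, where the radius bookkeeping between $\frac{1}{12n}$, $\frac{1}{6n}$ and $\frac{1}{n}$ is precisely what forces the choice of the neighborhood $U_{12n}$. Once surjectivity onto $U_{12n}$ is secured, both the existence of $\eta^{-1}|_{U_{12n}}$ and the estimate are routine consequences of the defining relation $\eta=\id+\gamma$.
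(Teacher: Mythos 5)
Your proof is correct; it reaches both conclusions, and in fact with a sharper constant. The comparison with the paper's argument splits into two halves. For the inclusion $U_{12n}\subseteq\eta(U_{6n})$ you and the paper use the same underlying mechanism, but packaged differently: the paper invokes a quantitative Lipschitz inverse function theorem (Theorem \ref{thm_Lipschitz_Inverse}, with $T=\id_X$, $f(w)=\gamma(a+w)$, $\lambda\le\frac12$, yielding $r'=\frac{1}{6n}(1-\lambda)\ge\frac{1}{12n}$), whereas you unfold that theorem's proof and run the contraction $T(x)=y-\gamma(x)$ by hand on $\cBallin{\rho}{X}{a}$; your radius bookkeeping ($2\Xnorm{u}\le\rho<\frac{1}{6n}$, self-map via $\gamma(a)=0$ and the mean value bound $\Xnorm{\gamma(x)}\le\frac12\Xnorm{x-a}$) is exactly what the cited theorem encapsulates, so this part is more self-contained but not essentially different. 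For the norm estimate the routes genuinely diverge: the paper bounds the \emph{derivative} of $\eta^{-1}-\id$ by $2$ (via the Neumann-series estimate for $(\eta'(x))^{-1}$) and then converts to a sup-norm bound using the inequality $\supnorm{\cdot}\le\frac{1}{12n}\Dnorm{\cdot}$ on $U_{12n}$ from Lemma \ref{lemma_Dnorm}, landing on $\frac{2}{12n}=\frac{1}{6n}$; you instead exploit the pointwise identity $\eta^{-1}(y)-y=-\gamma(\eta^{-1}(y))$ and apply the mean value estimate to $\gamma$ at the point $\eta^{-1}(y)\in U_{6n}$, which gives the stronger bound $\frac{1}{12n}$ with no detour through derivatives of the inverse. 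Your zeroth-order argument is both shorter and quantitatively better; the paper's derivative-based version has the mild advantage of reusing machinery ($\Dnorm{\cdot}$ and Lemma \ref{lemma_Dnorm}) that is needed elsewhere anyway, but nothing in the sequel requires the weaker constant.
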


To prove this lemma, we need a quantitative version of the Inverse Function Theorem for Banach spaces which can be found in \cite{MR0418164}:
\begin{theorem}[Lipschitz inverse function theorem]	\label{thm_Lipschitz_Inverse}
 Let $X$ be a Banach space and let $\smfunc{T}{X}{X}$ be a linear invertible map. Suppose $\smfunc{f}{U}{X}$ is $L$-Lipschitz continuous with $L>0$, where $U$ an open neighborhood of $0$ in $X$ and $f(0)=0$, and $\lambda:=L\cdot \opnorm{T^{-1}}<1$. Then $T+f$ is a homeomorphism of $U$ onto an open subset $V$ of $X$ and $(T+f)^{-1}$ is Lipschitz with constant \smash{$\frac{1}{1-\lambda}\opnorm{T^{-1}}$}. If $U$ contains the ball $\oBallin{r}{X}{0}$, then $V$ contains the ball $\oBallin{r'}{X}{0}$ with $r':=\frac{r(1-\lambda)}{\opnorm{T^{-1}} }$.
\end{theorem}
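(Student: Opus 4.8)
The statement is a quantitative Banach-space inverse function theorem whose engine is the contraction mapping principle, so the plan is to rewrite $(T+f)(x)=y$ as a uniform fixed-point problem. For fixed $y\in X$ I would introduce
\[
 \func{\Phi_y}{U}{X}{x}{T^{-1}\bigl(y-f(x)\bigr)}
\]
and observe that $x\in U$ solves $(T+f)(x)=y$ precisely when $\Phi_y(x)=x$. Since $f$ is $L$-Lipschitz, for all $x_1,x_2\in U$ one has $\Xnorm{\Phi_y(x_1)-\Phi_y(x_2)}\le\opnorm{T^{-1}}\Xnorm{f(x_1)-f(x_2)}\le\lambda\Xnorm{x_1-x_2}$, so each $\Phi_y$ is a $\lambda$-contraction with the same constant $\lambda<1$, independently of $y$. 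This single estimate will drive the entire proof.

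From the contraction estimate I would first read off injectivity and the inverse bound, before invoking any existence theorem. If $(T+f)(x_1)=(T+f)(x_2)=:y$, then $x_1,x_2$ are both fixed points of $\Phi_y$, so $\Xnorm{x_1-x_2}\le\lambda\Xnorm{x_1-x_2}$ forces $x_1=x_2$. For the Lipschitz bound, writing $y_i:=(T+f)(x_i)$ and rearranging gives $x_1-x_2=T^{-1}(y_1-y_2)-T^{-1}\bigl(f(x_1)-f(x_2)\bigr)$; taking norms and moving the term $\lambda\Xnorm{x_1-x_2}$ to the left-hand side yields $\Xnorm{x_1-x_2}\le\frac{\opnorm{T^{-1}}}{1-\lambda}\Xnorm{y_1-y_2}$, which is exactly the asserted Lipschitz constant of $(T+f)^{-1}$.

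The existence half, together with the ball estimate and openness, is where the fixed-point theorem is actually used. Assuming $\oBallin{r}{X}{0}\subseteq U$ and given $y$ with $\Xnorm{y}<r'$, I would find a radius $\rho<r$ for which $\Phi_y$ maps the closed ball $\cBallin{\rho}{X}{0}$ into itself: because $f(0)=0$ we have $\Phi_y(0)=T^{-1}y$, hence $\Xnorm{\Phi_y(x)}\le\lambda\rho+\opnorm{T^{-1}}\Xnorm{y}$ for $\Xnorm{x}\le\rho$, which is $\le\rho$ as soon as $\opnorm{T^{-1}}\Xnorm{y}\le(1-\lambda)\rho$; the hypothesis $\Xnorm{y}<r'=\frac{r(1-\lambda)}{\opnorm{T^{-1}}}$ lets me pick such a $\rho<r$. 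Since $\cBallin{\rho}{X}{0}$ is complete, Banach's theorem supplies a solution $x\in U$, proving $\oBallin{r'}{X}{0}\subseteq V:=(T+f)(U)$. To get openness of all of $V$, I would repeat the ball argument around an arbitrary image point $y_0=(T+f)(x_0)$ after the shift $\tilde f(x):=f(x_0+x)-f(x_0)$, which satisfies $\tilde f(0)=0$ and is again $L$-Lipschitz, using $(T+f)(x_0+x)=y_0+(T+\tilde f)(x)$; continuity of $T+f$ together with the Lipschitz inverse then gives the homeomorphism onto the open set $V$.

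The only genuinely delicate step I expect is the quantitative self-mapping estimate: one must choose $\rho$ so that $\cBallin{\rho}{X}{0}$ is at once contained in $U$ and invariant under $\Phi_y$, and must be careful that the strict inequality $\Xnorm{y}<r'$ (not $\le r'$) is precisely what makes room for such a $\rho<r$. Everything else is an essentially automatic consequence of the uniform contraction estimate.
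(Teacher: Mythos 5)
Your proof is correct and complete; note, however, that the paper itself offers no proof of this theorem at all --- it is quoted as a known result with a reference to the literature, so there is no in-paper argument to compare against, and your write-up in effect supplies the standard proof that the cited source contains. Your reduction of $(T+f)(x)=y$ to the fixed-point equation $\Phi_y(x)=x$ with $\Phi_y(x)=T^{-1}\bigl(y-f(x)\bigr)$ is the classical engine, and all four consequences are drawn correctly: the uniform estimate $\Xnorm{\Phi_y(x_1)-\Phi_y(x_2)}\leq\lambda\Xnorm{x_1-x_2}$ gives injectivity; the rearrangement $x_1-x_2=T^{-1}(y_1-y_2)-T^{-1}\bigl(f(x_1)-f(x_2)\bigr)$ yields exactly the asserted constant $\frac{1}{1-\lambda}\opnorm{T^{-1}}$ after absorbing the $\lambda\Xnorm{x_1-x_2}$ term; the choice of $\rho$ with $\frac{\opnorm{T^{-1}}\Xnorm{y}}{1-\lambda}\leq\rho<r$ (possible precisely because $\Xnorm{y}<r'$ is strict, as you observe) makes $\cBallin{\rho}{X}{0}\subseteq U$ a complete $\Phi_y$-invariant set, so Banach's fixed point theorem gives $\oBallin{r'}{X}{0}\subseteq V$; and the translation trick $\tilde f(x):=f(x_0+x)-f(x_0)$, which preserves the Lipschitz constant and the normalization $\tilde f(0)=0$, lets you rerun the ball estimate at every point of $U$, proving $V$ open. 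Together with the (implicitly used, but worth stating) continuity of $T+f$ itself --- it is Lipschitz with constant $\opnorm{T}+L$, where boundedness of $T$ is tacitly part of the hypothesis since $\opnorm{T^{-1}}$ appears --- and the Lipschitz continuity of the inverse, this gives the homeomorphism claim, so nothing is missing.
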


\begin{proof}[Proof of Lemma \ref{lemma_12n}:]
 Let \smash{$x\in U_{12n}=K+\oBallin{\frac{1}{12n} }{X}{0}$} be given. We have to show that $x\in \eta(U_{6n})$.
 We know that there is an $a\in K$ such that $x=a+v$ with $v\in \oBallin{\frac{1}{12n} }{X}{0}$. Now, we set
	$r:=\frac{1}{6n}$, $T:=\id_X$, $U:=\oBallin{\frac{1}{6n}}{X}{0}$ and $\func{f}{U}{X}{w}{\gamma(a+w)}$. This function satisfies $f(0)=0$ and is Lipschitz continuous with Lipschitz constant $L:=\supnorm{f'}\leq \Dnorm{\gamma}\leq \frac{1}{2}$. The number $\lambda:= L\cdot \opnorm{T^{-1}}=L\leq \frac{1}{2}$ is strictly less than $1$ and therefore all hypotheses of Theorem~\ref{thm_Lipschitz_Inverse} are satisfied. Therefore we may conclude that the image of $(\id+f)$ contains the ball $\oBallin{r'}{X}{0}$ with $r'=\frac{r(1-\lambda)}{\opnorm{T^{-1}} }=\frac{1}{6n}\cdot(1-\lambda)\geq \frac{1}{6n}(1-\frac{1}{2})=\frac{1}{12n}$. So, there exists a $w\in U$ such that $(\id+f)(w)=v$. But this means:
\[
 x=a+v=a+(\id+f)(w)=a+w+f(w)=a+w+\gamma(a+w)=\eta(a+w)
\]
So $x$ is in the image of $\eta$. This proves $U_{12n}\subseteq \eta(U_{6n})$.

 Since the \Frechet{} derivative of $\eta := \id_{U_{6n}}+\gamma|_{U_{6n}}$ has distance at most $\frac{1}{2}$ from the identity, it follows that the \Frechet{} derivative of $\eta^{-1}$ has distance at most $\frac{1}{1-\frac{1}{2}}=2$ from the identity. Therefore:
 \[
	\Dnorm{  \left.  \left(    \left(  \gamma + \id_{U_n} \right){\Big|}_{U_{6n} }  \right)^{-1}\right|_{U_{12n} }  -\id_{U_{12n}}    }\leq 2.
 \]
 Together with $\supnorm{\cdot}\leq \frac{1}{12n}\Dnorm{\cdot}$ the assertion follows.
\end{proof}

So far we showed that $\DiffGermK$ is an open subset of the $C^\omega_\C$-manifold $\EndGermK$ and therefore has an induced manifold structure.
To show complex analyticity of the inversion map, we once again use our global chart $\glchart$ and define:
\[
 \Func{\Inversion}{ \glchart(\DiffGermK) }{\glchart(\DiffGermK)}{\gamma}{ \glchart\left(      \left(\glchart^{-1}(\gamma)\right)^{-1}   \right). }
\]
It remains to show that $\Inversion$ is analytic.

From now on, we again use the notation: $E_n := \HolbK{n}$ and $F_n := \BConeK{n}$.
Lemma \ref{lemma_12n} allows us to define for every $n\in\N$ the following map:
\[
 \Func{\Inversion_n}{\oBallin{\frac{1}{2}}{F_n}{0} }{\oBallin{\frac{1}{6n} }{E_{12n} }{0}  }
			{ \gamma }{  \left.  \left(    \left(  \gamma + \id_{U_n} \right){\Big|}_{U_{6n} }  \right)^{-1}\right|_{U_{12n} }  -\id_{U_{12n}}   . }
\]
If we are able to show that every $\Inversion_n$ is $C^\omega$ then we can directly apply Theorem~A
 and see that the monoid inversion is analytic on an open neighborhood of the identity. Then inversion is everywhere $C^\omega$ and we are done.

\newcommand{\direcEins}{\widehat{\gamma}_1}
\newcommand{\direc}{\widehat{\gamma}_2}

\begin{proposition}										\label{proposition_h_n}
\begin{itemize}
 \item [(a)] The mapping
	\[
	 \Func{h_n}{\oBallin{\frac{1}{2}}{F_n}{0} \times \oBallin{\frac{1}{6n} }{E_{12n} }{0}   }{E_{12n} }{(\gamma_1,\gamma_2)}
									{(\gamma_1+\id_{U_n})\circ (\gamma_2+\id_{U_{12n}}) - \id_{U_{12n}}	}
	\]
	is complex analytic.
 \item [(b)]	For every fixed $(\gamma_1,\gamma_2)\in \oBallin{\frac{1}{2}}{F_n }{0} \times \oBallin{\frac{1}{6n} }{E_{12n} }{0}$ and every $\direcEins\in F_n, \direc\in E_{12n}$ we have
	\[
	 h_n'(\gamma_1,\gamma_2)(\direcEins,\direc) =
			 \direcEins\circ(\gamma_2+\id_{U_{12n}} )+ \gamma_1'(\gamma_2+\id_{U_{12n}} )(\direc)+\direc.
	\]
 \item [(c)] For $(\gamma_1,\gamma_2)\in \oBallin{\frac{1}{2}}{F_{n} }{0} \times \oBallin{\frac{1}{6n}}{E_{12n} }{0} $, we have the equivalence:
	\[
 	 \bigl(h_n(\gamma_1,\gamma_2)=0 \bigr) \Longleftrightarrow   \bigl(\gamma_2 = \Inversion_n(\gamma_1)\bigr).
	\]
 \item [(d)] Every $\Inversion_n$ is complex analytic.
\end{itemize}
\end{proposition}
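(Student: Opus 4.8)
My plan is to obtain (a) and (b) exactly as the analyticity of the monoid multiplication was obtained in Proposition~\ref{prop_monoid_mult}, by factoring $h_n$ through the composition map of Lemma~\ref{lemma_boris}; to prove (c) by a direct geometric argument; and to deduce (d) from the complex analytic implicit function theorem, using (b) to verify the invertibility hypothesis and the converse of (c) to identify the implicit function with $\Inversion_n$.

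For (a) I would restrict $\gamma_1$ to the smaller neighborhood $U_{n+1}$; by Lemma~\ref{lemma_inclusion} together with the bounded inclusion $\BConeK{n}\to\HolbK{n}$ this maps $\gamma_1|_{U_{n+1}}+\id_{U_{n+1}}$ continuously into $\BC{U_{n+1}}{X}{3}$. For the inner map, if $(\gamma_1,\gamma_2)$ lies in the given domain and $x=a+v\in U_{12n}$ with $a\in K$ and $\Xnorm{v}<\tfrac{1}{12n}$, then $\Xnorm{(\gamma_2+\id_{U_{12n}})(x)-a}\le\Xnorm{v}+\supnorm{\gamma_2}<\tfrac{1}{12n}+\tfrac{1}{6n}=\tfrac{1}{4n}<\tfrac{1}{n+1}$, so $\gamma_2+\id_{U_{12n}}\in\BCo{U_{12n}}{U_{n+1}}{1}$ with image at uniformly positive distance from $\partial U_{n+1}$. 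Thus $h_n$ factors as the affine map $(\gamma_1,\gamma_2)\mapsto(\gamma_1|_{U_{n+1}}+\id_{U_{n+1}},\gamma_2+\id_{U_{12n}})$, followed by $\compNEUNEU{X}{1}$, followed by subtraction of $\id_{U_{12n}}$ (the result vanishes on $K$, hence lies in $\BConeK{12n}$) and the continuous inclusion $\BConeK{12n}\hookrightarrow E_{12n}$; each factor is analytic, so $h_n$ is. For (b), every factor except $\compNEUNEU{X}{1}$ is affine and contributes only its linear part, so the chain rule and the derivative formula \eqref{Ableitung_Comp_BCellxBC} give $h_n'(\gamma_1,\gamma_2)(\direcEins,\direc)=\direcEins\circ(\gamma_2+\id_{U_{12n}})+\bigl((\gamma_1+\id_{U_n})'\circ(\gamma_2+\id_{U_{12n}})\bigr)\cdot\direc$, and expanding $(\gamma_1+\id_{U_n})'=\gamma_1'+\id_X$ yields the stated formula.

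For (c) the forward direction is immediate from Lemma~\ref{lemma_12n}: if $\gamma_2=\Inversion_n(\gamma_1)$ then $\gamma_2+\id_{U_{12n}}=\bigl((\gamma_1+\id_{U_n})|_{U_{6n}}\bigr)^{-1}|_{U_{12n}}$ maps $U_{12n}$ into $U_{6n}$, so $(\gamma_1+\id_{U_n})\circ(\gamma_2+\id_{U_{12n}})=\id_{U_{12n}}$ and $h_n=0$. For the converse set $\eta_1:=\gamma_1+\id_{U_n}$, $\eta_2:=\gamma_2+\id_{U_{12n}}$ and $\sigma:=\Inversion_n(\gamma_1)+\id_{U_{12n}}$. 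Given $h_n=0$ we have $\eta_1\circ\eta_2=\id_{U_{12n}}=\eta_1\circ\sigma$ on $U_{12n}$. Fixing $x=a+v\in U_{12n}$, the estimate from (a) gives $\eta_2(x)\in\oBallin{\tfrac1n}{X}{a}$, and $\supnorm{\sigma-\id_{U_{12n}}}\le\tfrac{1}{6n}$ (Lemma~\ref{lemma_12n}) gives $\sigma(x)\in\oBallin{\tfrac1n}{X}{a}$ as well; since this ball is convex and contained in $U_n$, the segment joining $\eta_2(x)$ and $\sigma(x)$ stays in $U_n$, and from $\eta_1(\eta_2(x))=\eta_1(\sigma(x))$ together with $\supnorm{\gamma_1'}=\Dnorm{\gamma_1}<\tfrac12<1$ the contraction estimate of Lemma~\ref{lemma_157} forces $\eta_2(x)=\sigma(x)$. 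As $x$ is arbitrary, $\gamma_2=\Inversion_n(\gamma_1)$.

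Finally, (d) follows from the complex analytic implicit function theorem (a consequence of the analytic inverse function theorem for Banach spaces). By (b), $\partial_{\gamma_2}h_n(\gamma_1,\gamma_2)=\id+A$ with $A\colon\direc\mapsto\gamma_1'(\gamma_2+\id_{U_{12n}})\cdot\direc$ and $\opnorm{A}\le\supnorm{\gamma_1'}<\tfrac12$, so $\partial_{\gamma_2}h_n$ is a toplinear automorphism of $E_{12n}$ by the Neumann series. Since $h_n(\gamma_1,\Inversion_n(\gamma_1))=0$ for every $\gamma_1$ in the domain (forward part of (c)), the implicit function theorem produces, near each such $\gamma_1$, a unique analytic solution $\psi$ of $h_n(\gamma_1,\psi(\gamma_1))=0$ with values in $\oBallin{\tfrac{1}{6n}}{E_{12n}}{0}$; by the fibrewise uniqueness of the converse of (c) this solution equals $\Inversion_n$, so $\Inversion_n$ is complex analytic. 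I expect the main obstacle to be the converse of (c): one must pin both preimages $\eta_2(x)$ and $\sigma(x)$ into a single convex ball inside $U_n$ before the contraction estimate applies, which is exactly what the nested radii $\tfrac{1}{12n}<\tfrac{1}{6n}<\tfrac{1}{4n}<\tfrac1n$ are arranged to guarantee; verifying $\opnorm{A}<1$ for the invertibility of $\partial_{\gamma_2}h_n$ is the other point that needs care.
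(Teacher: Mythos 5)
Your plan for (a), (b) and (d) follows the paper's route (factor through the composition map of Lemma \ref{lemma_boris}, read off the derivative from \eqref{Ableitung_Comp_BCellxBC}, then apply the implicit function theorem with the Neumann-series invertibility of $\partial_{\gamma_2}h_n$), but step (a) has a genuine defect as written. You route the composition through $\compNEUNEU{X}{1}$ and therefore need $\gamma_2+\id_{U_{12n}}$ to lie in $\BCo{U_{12n}}{U_{n+1}}{1}\subseteq \BC{U_{12n}}{X}{1}$, i.e.\ to have a \emph{bounded first derivative on all of} $U_{12n}$. But here $\gamma_2$ ranges over $\oBallin{\frac{1}{6n}}{E_{12n}}{0}$ with $E_{12n}=\HolbK{12n}$ normed by the sup-norm alone; a bounded holomorphic map on $U_{12n}$ can have unbounded derivative near the boundary of $U_{12n}$, so your map $\LINKS$ does not take values in the claimed codomain. (In Proposition \ref{prop_monoid_mult} the $k=1$ version was legitimate precisely because the inner argument there was drawn from $F_{l_n}=\BConeK{l_n}$, normed by $\Dnorm{\cdot}$; that is the feature you have carried over without checking it here.) The repair is the paper's choice: take $k=0$, i.e.\ factor through $\BC{U_{2n}}{X}{2}\times\BCo{U_{12n}}{U_{2n}}{0}\to\BC{U_{12n}}{X}{0}$. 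Since $E_{12n}$ carries the sup-norm, landing in $\BC{U_{12n}}{X}{0}$ suffices for analyticity of $h_n$, your distance estimate $\Xnorm{(\gamma_2+\id)(x)-a}<\frac{1}{4n}$ still gives the required positive distance to $\partial U_{2n}$, and formula \eqref{Ableitung_Comp_BCellxBC} with $k=0$ still yields exactly the derivative claimed in (b). With this change the rest of (a), (b) and (d) goes through as you describe.

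Your converse of (c) is correct but genuinely different from the paper's. The paper passes to the open set $W:=(\gamma_2+\id_{U_{12n}})^{-1}(U_{6n})$, composes with the inverse diffeomorphism there, and then invokes analyticity to upgrade equality on $W$ to equality on $U_{12n}$; you instead pin $\eta_2(x)$ and $\sigma(x)$ into the single convex ball $\oBallin{\frac1n}{X}{a}\subseteq U_n$ and apply the expansivity estimate $\Xnorm{\eta_1(p)-\eta_1(q)}\geq(1-\supnorm{\gamma_1'})\Xnorm{p-q}$ from Lemma \ref{lemma_157}. Your version is pointwise and avoids the appeal to analytic continuation, at the cost of redoing the radius bookkeeping; both are valid.
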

\begin{proof}
\newcommand{\OBEN}{\clubsuit}
\newcommand{\UNTEN}{\heartsuit}
\newcommand{\LINKS}{\diamondsuit}
\newcommand{\RECHTS}{\spadesuit}
        (a)
	The argument is essentially the same as in Proposition \ref{prop_monoid_mult}. We write
	$ h_n(\gamma_1,\gamma_2) = \OBEN(\gamma_1,\gamma_2)+\gamma_2$ with $\OBEN(\gamma_1,\gamma_2)=\gamma_1\circ(\gamma_2+\id_{U_{12n} })$
	and have the following commutative diagram:
	\[
	 \xymatrix{ \oBallin{\frac{1}{2}}{F_n }{0} \times \oBallin{\frac{1}{6n} }{E_{12n} }{0}  \ar[rr]^-\OBEN 	\ar[d]^\LINKS
							&&	E_{12n}		\ar@{^{(}->}[d]_\RECHTS				\\
		\BC{U_{2n} }{X}{2}\times\BCo{U_{12n}}{U_{2n}}{0} \ar[rr]^-\UNTEN
							&&	\BC{U_{12n}}{X}{0}
	 }
	\]
	Once again $\RECHTS$ is a topological embedding.
	The map 
	\[
	\Func{\LINKS}{\oBallin{\frac{1}{2}}{F_n }{0} \times \oBallin{\frac{1}{6n} }{E_{12n} }{0} }{\BC{U_{2n}}{X}{2}\times \BCo{U_{12n} }{U_{2n} }{0} }{ (\gamma_1,\gamma_2) }{ (\gamma_1|_{U_{2n} },\gamma_2+\id_{U_{12n}}) }
	\]
	is  well-defined and continuous. Since it is affine, it is automatically analytic. The last arrow
	\[
	 \Func{\UNTEN}{\BC{U_{2n}}{X}{2} \times \BCo{U_{12n} }{U_{2n}}{0} }{\BC{U_{12n} }{X}{0} }{(\gamma,\eta) }{\gamma\circ\eta }
	\]
	is $C^\omega$ by Lemma \ref{lemma_boris} and since the diagram commutes, $h_n$ is analytic.

	(b) 
	This follows directly from the formula in Lemma \ref{lemma_boris}.
        
	(c) 
	Assume that $\gamma_2 = \Inversion_n(\gamma_1)$ holds. 	Then
	\begin{align*}
	 h_n(\gamma_1, \gamma_2 ) 	&	=	(\gamma_1+\id_{U_n})\circ (\Inversion_n(\gamma_1)+\id_{U_{12n}}) - \id_{U_{12n}}	
					\\&	=	(\gamma_1+\id_{U_n})\circ    \left.  \left(    \left(  \gamma_1 + \id_{U_n} \right){\Big|}_{U_{6n} }  \right)^{-1}\right|_{U_{12n} }  \!\!\!\! - \id_{U_{12n}}	
					\\&	=	\id_{U_{12n}}   - \id_{U_{12n}}
						=	 0.
	\end{align*}
	Conversely, assume that $(\gamma_1,\gamma_2)\in \oBallin{\frac{1}{2}}{F_n }{0} \times \oBallin{\frac{1}{6n} }{E_{12n} }{0}$ is given with $h_n(\gamma_1,\gamma_2)=0$.
	Then $(\gamma_1+\id_{U_n})\circ(\gamma_2+\id_{U_{12n}})=\id_{U_{12n}}$.
	Since $\gamma_2+\id_{U_{12n}}$ is continuous, $W:= (\gamma_2+\id_{U_{12n}})^{-1}(U_{6n})\subseteq U_{12n}$ is an open $K$-neighborhood. Moreover,
	\[
	    \left(  \gamma_1 + \id_{U_n} \right){\big|}_{U_{6n} }\circ(\gamma_2+\id_{U_{12n}})|_W=\id_{W}.
	\]
	But since $\left(  \gamma_1 + \id_{U_n} \right){\big|}_{U_{6n} }$ is a diffeomorphism, we can compose this equality from the left with
	$\left(\left(  \gamma_1 + \id_{U_n} \right){\Big|}_{U_{6n} }\right)^{-1}$ and obtain
	\[
		(\gamma_2+\id_{U_{12n}})|_W=\left.  \left(    \left(  \gamma_1 + \id_{U_n} \right){\Big|}_{U_{6n} }  \right)^{-1}\right|_W.
	\]
	Thus we obtain that $\gamma_2$ and $\Inversion_n(\gamma_1)$ coincide on a smaller neighborhood $W\subseteq U_{12n}$ and since they are complex analytic this means $\gamma_2 = \Inversion_n(\gamma_1)$.
        
	(d) 
	Let $\gamma_1\in \oBallin{\frac{1}{2}}{F_n}{0}$ and set $\gamma_2:= \Inversion_n(\gamma_1)\in \oBallin{\frac{1}{6n}}{E_{12n} }{0}$. By (c) this implies $h_n(\gamma_1,\gamma_2)=0$.
 	We wish to use the Implicit Function Theorem and therefore examine the following operator, the ``partial differential with respect to the second argument'':
	\[
	 \Func{T}{E_{12n} }{E_{12n} }{\direc}{h_n'(\gamma_1,\gamma_2)(0 ,\direc).}
	\]
	By (b) this can be rewritten as $T:\direc\mapsto\gamma_1'(\gamma_2+\id_{U_{12n}} )(\direc)+\direc.$
	
	Let $\direc\in E_{12n}$ be given. Then we can estimate
	\begin{align*}
	 \supnorm{\left(T - \id_{E_{12n}}\right)(\direc)  }
						&	=	\sup_{x\in U_{12n}}	\Xnorm{\left(T(\direc) - \direc\right)(x)  }
						\\&	=	\sup_{x\in U_{12n}}	\Xnorm{\gamma_1'(\gamma_2(x)+\id_{U_{12n}}(x) )(\direc(x))  }
						\\&	\leq	\sup_{x\in U_{12n}}	\opnorm{\gamma_1'(\gamma_2(x)+x )}\Xnorm{\direc(x)}  
						\\&	\leq				\supnorm{\gamma_1'} \supnorm{\direc}  
						   	=   				\Dnorm{\gamma_1} \supnorm{\direc} 
							\leq 				\frac{1}{2}	 \supnorm{\direc}.
	\end{align*}
	Thus, $\opnorm{T-\id_{E_{12n}}}\leq \frac{1}{2}<1$. Therefore the bounded operator $T$ is invertible, i.e. an isomorphism of Banach spaces.
	
	By the Implicit Function Theorem, there are neigh\-bor\-hoods \hbox{$\Omega_1\subseteq F_n,$}\linebreak
	 \hbox{$\Omega_2\subseteq E_{12n}$} of $\gamma_1$ and $\gamma_2$ respectively, such that $h_n^{-1}\left(\smset{0}\right)\cap (\Omega_1\times\Omega_2)$ is the graph of a $C^\omega$-map from $\Omega_1$ to $\Omega_2$. But by (c), we know that this function has to be a restriction of $\smfunc{\Inversion_n}{\oBallin{\frac{1}{2}}{F_n}{0} }{\oBallin{\frac{1}{6n} }{E_{12n} }{0}  }$. Therefore $\Inversion_n$ is $C^\omega$ in a neighborhood of $\gamma_1$. Since $\gamma_1$ was arbitrary, $\Inversion_n$ is $C^\omega$.
\end{proof}
This proves Theorem~B 
stated in the introduction for $\K=\C$. As mentioned at the beginning of this section, the proof for $\K=\R$ can be copied verbatim from \cite[Corollary 15.11]{MR2310802}.

\section{Ascending unions of Banach Lie groups}						\label{sec_UNION}
In the following let $G_1 \subseteq G_2 \subseteq \cdots$ be an increasing sequence of analytic Banach Lie groups, such that the inclusion maps $\smfunc{j_n}{G_n}{G_{n+1}}$ are analytic.
Our goal is to construct a Lie group structure on the union $G:= \bigcup_{n=1}^\infty G_n$. But before we can define a manifold structure on $G$, first we have to construct the modelling locally convex vector space.

For every $n\in\N$ let $\g_n := \L(G_n)$ be the corresponding Banach Lie algebra. Since every $j_n$ is an injective morphism of Lie groups with exponential function, it is well known that the corresponding morphism of Lie algebras $\smfunc{i_n := \L(j_n)}{\g_n}{\g_{n+1}}$ is injective as well. Therefore we can identify $i_n(\g_n)$ with $\g_n$ and we may then assume that the Lie algebras form an increasing sequence. The union of this sequence will be denoted by $\g := \bigcup_{n=1}^\infty \g_n$. As a directed union of Lie algebras, this is clearly a Lie algebra. We endow it with the locally convex direct limit topology.

NOTE: 
Since we can only deal with Lie groups modeled on Hausdorff spaces, we have to make the assumption that this direct limit is Hausdorff.

By Corollary \ref{cor_POLYNOMIAL}, the Lie bracket $\smfunc{\brac}{\g\times\g}{\g}$ is continuous and therefore $\left(\g,\brac\right)$ becomes a locally convex Lie algebra.
Note: This would already go wrong in general if we considered direct limits of non-normable Lie algebras $\g_n$.
There are examples where the $\g_n$ are \Frechet{} Lie algebras and the resulting Lie bracket fails to be continuous.

Since every group $G_n$ is a Banach Lie group it admits a smooth exponential function. By commutativity of the diagram
\[\xymatrix{ 	G_n \ar[rr]^{j_n}	& &	G_{n+1}		\\
		\g_n\ar[rr]^{i_n} \ar[u]_{\exp_{G_n}}	& &	\g_{n+1}  \ar[u]_{\exp_{G_{n+1}}}	}
\]

we know that every exponential function $\exp_{G_n}$ can be regarded as the restriction of the exponential function $\exp_{G_{n+1}}$ of the following group. This allows us to define 
\[
 \Func{\Exp}{\g}{G}{x\in \g_n}{\exp_n(x)\in G_n}
\]
(Since we do not have a Lie group structure on $G$ yet, it makes no sense to claim that $\Exp$ is the exponential function of $G$, but it will turn out to the right exponential function.)
So far we did not use the norms on the Banach Lie algebras $\g_n$. In a Banach Lie algebra one usually expects the bilinear map $\smfunc{\brac_n}{\g_n\times\g_n}{\g_n}$ to have a norm less than or equal to $1$, in which case we call $\norm{\cdot}_n$ compatible. This can always be achieved by replacing the norm $\norm{\cdot}_n$ by a scalar multiple. For what follows it will be necessary that all bonding maps $\smfunc{i_n}{\g_n}{\g_{n+1}}$ have norm $\leq 1$. Unfortunately, in general one cannot have both.
There are cases where it is not possible to find equivalent norms such that both, the bonding maps and the Lie brackets, have a norm at most $1$.
Therefore, this has to be made another assumption in Theorem~C from the Introduction.

\begin{proof}[Proof of Theorem C]
 Set $R:=\log\frac{3}{2}$ and $C:=\log2$.
 It is known that in a Banach Lie algebra $\g_n$ with compatible norm $\nnorm{\cdot}$,
 the \BCH{}-series converges for all $x,y\in\g_n$ with $\nnorm{x}+\nnorm{y}<\log\frac{3}{2}$ and defines an analytic multiplication:
\[
 \smfunc{*_n}{\oBallin{R}{\g_n\times\g_n}{0} }{\oBallin{C}{\g_n}{0}}.
\]
We give the space $E_n:= \g_n\times\g_n$ the norm $\norm{(x,y)}_{E_n}:= \nnorm{x} + \nnorm{y} $. (see e.g. \cite[Chapter II, \S 7.2, Proposition 2]{MR1728312})
The set $U:=\bigcup_{n\in \N} \oBallin{R}{E_n}{0}$ is an open $0$-neighborhood in the direct limit
$E:=\bigcup_{n\in\N}\left( \g_n\times \g_n\right) \cong \left(\bigcup_{n\in\N} \g_n\right)\times \left(\bigcup_{n\in\N} \g_n\right)$.

In the case where $\K=\C$, we are now ready to apply Theorem~A, since all hypotheses are satisfied and therefore the map $\smfunc{*=\bigcup_{n\in\N}*_n }{U}{\g}$ is complex analytic.

Since this step does not work in the real case, we have to do some extra work:
If $\K=\R$, we may consider the complexifications of the Lie algebras. Complexifications are again Lie algebras, the norms extend to compatible norms and the bonding maps remain operators of norm at most $1$. Therefore we can apply Theorem~A  to these Lie algebras and obtain a complex analytic \BCH{}-multiplication which we may then restrict to the original real Lie subalgebra.

Having established that the \BCH{}-multiplication is analytic, we can now construct the Lie group structure using Corollary \ref{cor_local_data}:

By hypothesis (c), we know that the exponential function is injective on some neighborhood $V\subseteq \g$. Since $*$ is continuous, there exists a smaller $0$-neighborhood $U'\subseteq U$ such that $U' * U' \subseteq V$. Then, by Corollary \ref{cor_local_data} there exists an analytic Lie group structure on $\generatedby{\Exp(U')}$ making $\Exp$ a diffeomorphism around $0$.
The group $\generatedby{\Exp(U')}$ is equal to 
\[
	\generatedby{\Exp(\g)}= \bigcup_{n\in\N} \generatedby{\Exp_{G_n}(\g_n)}=\bigcup_{n\in\N} \component{G_n}
\]
 which is the union of the identity components of the Banach Lie groups we started with.

We now can extend this manifold structure from $\bigcup_{n\in\N} \component{G_n}$ to the whole group $G$, using Proposition \ref{prop_local_data}.
In fact, being a subgroup, $\bigcup_{n\in\N} \component{G_n}$ is symmetric and contains $1$. As $\bigcup_{n\in\N} \component{G_n}$ already is a Lie group, multiplication and inversion are $C^\omega$ as required. It only remains to show that conjugation with elements $g\in G$ is $C^\omega$.

Let $g\in G$ be such an element. Then there is an $m\in\N$ such that $g\in G_m$. We have to show that $\smfunc{c_g}{\bigcup_{n\in\N} \component{G_n}}{\bigcup_{n\in\N} \component{G_n}}$ is analytic.

Since $\smfunc{ \Ad_g^G:=\bigcup_{n\geq m} \Ad_g^{G_n} }{\bigcup_{n\ge m} \g_n}{ \bigcup_{n\geq m} \g_n }$ is continuous by the locally convex direct limit property, $\Exp$ is a local diffeomorphism at $0$ and $c_g\circ \Exp=\Exp\circ\Ad_g^G$, it follows that $c_g$ is analytic on some identity neighborhood. This is sufficient for a group homomorphism to be analytic everywhere.

This turns $G$ into a $C^\omega$-Lie group in which $\bigcup_{n\in\N} \component{G_n}$ is an open connected subgroup, hence the identity component.

The uniqueness of the manifold structure is clear since $\Exp$ is a local diffeomorphism.
\end{proof}

\section{Example: Lie groups associated with Dirichlet series}						\label{sec_DIRICHLET}
In this section we construct more ``exotic'' examples of Lie groups modelled on Banach and LB-spaces. In fact, the discussion of these examples originally led to the discovery of Theorem~A. All vector spaces and Lie groups will be over the field $\C$.
\subsection*{Banach spaces of Dirichlet series}
\begin{definition}
\begin{enumerate}
 \item  A \emph{Dirichlet series} with values in a complex Banach space $X$ is a formal series of the form
	\[
	 \Din{a_n},
	\]
	where all $a_n$ are elements in $X$. 
 \item	For every $s\in\R$, let $\Halfo{s}:=\set{z\in\C}{\Re(z) >s}$ denote the corresponding open and $\Halfc{s}:=\set{z\in\C}{\Re(z) \geq s}$
	the closed half plane in $\C$.
 \item A Dirichlet series is said to \emph{converge absolutely} on $\Halfc{s}$ if
	\[
	 \DiNorm{\Din{a_n}}{s}:= \sum_{n=1}^\infty \norm{a_n} n^{-s}<\infty.
	\]
 	The space of all Dirichlet series that converge absolutely on $\Halfc{s}$ will be denoted by $\Di{s}{X}$. Together with the norm just defined this vector space becomes a Banach space isomorphic to $\lone(\N,X)$ via the isomorphism
	\[
	 \nnfunc{\Di{s}{X}}{\lone(\N,X)}{\Din{a_n}}{\seqn{a_n\cdot n^{-s}}}.
	\]
 \item The Banach space $X$ can be embedded isometrically into $\Di{s}{X}$ via
	\[
	 \nnfunc{X}{\Di{s}{X}}{a}{\DiOne{a}=\Din{\delta_{n,1} a}	.}
	\]
	All Dirichlet series obtained in this fashion are called \emph{constant}.
\end{enumerate}
\end{definition}

Every Dirichlet series in $\Di{s}{X}$ can be viewed as a continuous bounded function from the closed right half plane
	$\Halfc{s}$ to $X$. In fact, this interpretation defines a bounded operator between Banach spaces of norm $1$:
	\[
	 \nnFunc{\Di{s}{X}}{\left( \BC{ \Halfc{s} }{ X }{ 0 },\supnorm{\cdot} \right) }{\Din{a_n}}{\left( z \mapsto \sum_{n=1}^\infty a_n n^{-z} \right)} \tag{$*$}\label{label_Func}
	\]
 	All functions obtained in this fashion are complex analytic on the open half plane 
	$\Halfo{s}$.
 	Constant Dirichlet series as defined above are mapped to constant functions.
	This assignment is injective which means that it is possible to reconstruct the coefficients $\seqn{a_n}$ from the function. For example, the first coefficient $a_1=\lim_{\Re(z)\rightarrow +\infty}\gamma(z)$. Similarly, the other coefficients may be calculated. This means that a continuous function can have at most one Dirichlet series representation. 

	But there are lots of functions which cannot be written as a Dirichlet series, although they are continuous, bounded on $\Halfc{s}$ and complex analytic on $\Halfo{s}$, e.g. $f(z)=e^{-z}\cdot a$ for an element $a\in X, a\neq 0$. This means the operator is far from being surjective.

Henceforth, we identify a Dirichlet series $\gamma\in\Di{s}{X}$ with the corresponding function from $\Halfc{s}$ to $X$.

\subsection*{LB-spaces of Dirichlet series}
So far, the number $s$ defining the complex half plane $\Halfc{s}$ was fixed. Now, we are interested in Dirichlet series which converge absolutely on some half plane.
\begin{lemma}
 For $s<t$ the bonding maps
$\smfunc{i_s}{\Di{s}{X}}{\Di{t}{X}}$ are bounded operators of norm $\leq1$.
\end{lemma}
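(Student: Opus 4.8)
The plan is to observe that the statement reduces to a single elementary monotonicity estimate. The map $i_s$ is visibly $\C$-linear, since it sends the formal Dirichlet series $\Din{a_n}$ to the same formal series, now viewed in $\Di{t}{X}$; hence it suffices to establish that $i_s$ is well defined (i.e. that a series converging absolutely on $\Halfc{s}$ also converges absolutely on $\Halfc{t}$) together with the bound $\DiNorm{i_s(\gamma)}{t}\leq \DiNorm{\gamma}{s}$ for every $\gamma$.

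First I would fix $\gamma=\Din{a_n}\in\Di{s}{X}$, so that $\DiNorm{\gamma}{s}=\sum_{n\in\N}\norm{a_n}\,n^{-s}<\infty$. The key point is that for every fixed $n\in\N$ the function $x\mapsto n^{-x}$ is non-increasing on $\R$; since $n\geq 1$ and $s<t$ this gives $n^{-t}\leq n^{-s}$. Multiplying by $\norm{a_n}\geq 0$ and summing over $n$ yields
\[
 \DiNorm{i_s(\gamma)}{t}=\sum_{n\in\N}\norm{a_n}\,n^{-t}\ \leq\ \sum_{n\in\N}\norm{a_n}\,n^{-s}=\DiNorm{\gamma}{s}<\infty.
\]
In particular the right-hand series is finite, so $i_s(\gamma)\in\Di{t}{X}$ and $i_s$ is well defined; the same inequality immediately shows $\opnorm{i_s}\leq 1$.

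There is essentially no obstacle here: the only content is the inequality $n^{-t}\leq n^{-s}$ for $n\geq 1$, which is exactly what makes absolute convergence on a right half plane an increasing (weaker) condition as the abscissa grows. One could even remark that the bound is sharp, since for the constant series $\DiOne{a}$ one has $\DiNorm{\DiOne{a}}{s}=\DiNorm{\DiOne{a}}{t}=\norm{a}$, so that in fact $\opnorm{i_s}=1$; but this refinement is not needed for the statement as given.
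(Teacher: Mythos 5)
Your proof is correct and is essentially the paper's argument: the paper proves the lemma by exactly the termwise comparison $n^{-t}\leq n^{-s}$ for $n\geq 1$ and $s<t$, summed against $\norm{a_n}$. Your additional remarks on well-definedness and sharpness of the bound are fine but not needed.
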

\begin{proof} \[  \biggDiNorm{\Din{a_n} }{t}\!\! = 	\sum_{n=1}^\infty \Xnorm{a_n}n^{-t} 
					\leq 	\sum_{n=1}^\infty \Xnorm{a_n}n^{-s} 
					= 	\biggDiNorm{\Din{a_n} }{s}. \qedhere\]
\end{proof}

Since $(\N,\leq)$ is cofinal in $(\R,\leq)$ is suffices to look only at $s\in\N$. So, again, we are dealing with a countable direct limit:
\begin{proposition}									\label{prop_direchlet_limit}
The space
\[
 \Dinf{X}:=\bigcup_{s\in\N}\Di{s}{X},
\]
endowed with the locally convex direct limit topology is Hausdorff and compactly regular.
\end{proposition}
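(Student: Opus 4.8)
The plan is to establish the two assertions separately: Hausdorffness by exhibiting an injective continuous linear map from $\Dinf{X}$ into a Hausdorff locally convex space, and compact regularity by verifying condition~(i) of Proposition~\ref{prop_compact_regularity}. Throughout I write $E_s := \Di{s}{X}$ for $s\in\N$, so that $\Dinf{X}=\bigcup_{s\in\N}E_s$ is a countable direct limit of Banach spaces whose bonding maps have operator norm at most $1$.

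For the Hausdorff property I would use coefficient extraction. A Dirichlet series is determined by its coefficients, so the linear map $\Phi\colon\Dinf{X}\to\prod_{n\in\N}X$ sending $\gamma=\Din{a_n}$ to $\seqn{a_n}$ is injective. On each step $E_s$ the $n$-th coordinate satisfies $\Xnorm{a_n}\le n^{s}\DiNorm{\gamma}{s}$, hence is a bounded functional; thus $\Phi|_{E_s}$ is continuous for every $s$, and by the universal property of the locally convex direct limit $\Phi$ is continuous on $\Dinf{X}$. Since $\prod_{n\in\N}X$ is Hausdorff and $\Phi$ is injective, $\Dinf{X}$ is Hausdorff.

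For compact regularity I would invoke Proposition~\ref{prop_compact_regularity} and check condition~(i): for each $s\in\N$ set $\Omega:=\oBallin{1}{E_s}{0}$ and $m:=s+1$, and show that every $E_l$ with $l\ge m$ induces on $\Omega$ the same topology as $E_m$. As the norms decrease with the index, the $E_l$-topology is always coarser than the $E_m$-topology, so it remains to prove that the identity map $(\Omega,\DiNorm{\cdot}{l})\to(\Omega,\DiNorm{\cdot}{m})$ is continuous. Given a difference $\delta=\Din{d_n}$ of two elements of $\Omega$ (so $\DiNorm{\delta}{s}<2$), I would split the $m$-norm at a cut-off $N$: the tail is controlled by the uniform bound on $\Omega$, via
\[
	\sum_{n>N}\Xnorm{d_n}\,n^{-m}\le N^{-(m-s)}\DiNorm{\delta}{s}<2N^{-1},
\]
and the head is controlled by the $l$-norm, via
\[
	\sum_{n\le N}\Xnorm{d_n}\,n^{-m}\le N^{\,l-m}\DiNorm{\delta}{l}.
\]
Choosing $N$ first to make the tail smaller than $\epsilon/2$, then requiring $\DiNorm{\delta}{l}$ small enough to make the head smaller than $\epsilon/2$, yields $\DiNorm{\delta}{m}<\epsilon$, as needed.

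The main point to get right --- and the reason $m$ must be taken strictly larger than $s$ --- is precisely this head/tail estimate: the exponent $m-s>0$ is exactly what forces the tail factor $N^{-(m-s)}$ to zero, so the choice $m=s$ would fail. Note also that the required smallness of $\DiNorm{\delta}{l}$ depends on the fixed $N$ and on $l$ through $N^{\,l-m}$, which is harmless, since condition~(i) only demands continuity of the inclusion for each individual $l\ge m$. With (i) verified, Proposition~\ref{prop_compact_regularity} yields compact regularity, completing the proof.
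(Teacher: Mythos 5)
Your proof is correct and follows essentially the same route as the paper: coefficient extraction into the product $\prod_{n\in\N}X$ (the paper uses $X^{\N}$) for the Hausdorff property, and verification of condition~(i) of Proposition~\ref{prop_compact_regularity} via a head/tail split on the unit ball of $\Di{s}{X}$. The only cosmetic difference is that the paper takes $t=s+2$ and controls the tail by the convergent series $\sum_{n>n_0}n^{-2}$, whereas you take $m=s+1$ and control it by the single factor $N^{-(m-s)}$; both estimates work for the same reason, namely that $m>s$ forces a decaying tail factor.
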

\begin{proof}
 Let $\func{f_s}{ \Di{s}{X} }{X^\N}{\Din{a_n} }{\seqn{a_n} }$ be the map that assigns to every Dirichlet series its sequence of coefficients. This map is continuous since the range space has the product topology and every component of $f_s$ is a continuous functional. The space $X^\N$ is locally convex (it is in fact a \Frechet{} space) and therefore, by the universal property of the locally convex direct limit, there is a continuous extension $\smfunc{f}{\Dinf{X}}{X^\N}$. Since $f$ is injective by construction and $X^\N$ is Hausdorff, it follows that also $\Dinf{X}$ is Hausdorff.

Proposition \ref{prop_compact_regularity} guarantees compact regularity of the limit $\Dinf{X}$ if we can show that for every $s\in\N$ there is a $t\geq s$ and an open $0$-neighborhood $\Omega\subseteq \Di{s}{X}$ such that $\Di{t}{X}, \Di{t+1}{X}, \ldots$ induce the same topology on $\Omega$.

For every given $s\in\N$ we set $t:=s+2$ and $\Omega := \oBallin{1}{\Di{s}{X} }{0}$.
Let $u\ge t$. To see that the topologies on $\Omega$ induced by $\Di{t}{X}$ and $\Di{u}{X}$ agree, it suffices to show that
\[
 \nnfunc{\Omega\subseteq \Di{u}{X}  }{\Di{t}{X} }{\gamma}{\gamma}
\]
is continuous. To this end, let $\epsilon>0$. Since the positive series $\sum_{n=1}^\infty \frac{1}{n^2}$ converges, there is an $n_0\in\N$ such that $\sum_{n>n_0}^\infty \frac{1}{n^2}<\frac{\epsilon}{4}$. Set $\delta:= n_0^{t-u}\cdot\frac{\epsilon}{2}$.
We show that, for any Dirichlet series $\gamma_1,\gamma_2\in \Omega$ with $\DiNorm{\gamma_1-\gamma_2}{u}<\delta$, we have $\DiNorm{\gamma_1-\gamma_2}{t}<\epsilon$.
Since $\gamma_1,\gamma_2\in\Omega$, we have $\gamma_d := \gamma_1-\gamma_2\in 2\Omega$. Therefore $\DiNorm{\gamma_d}{s}<2$ and $\DiNorm{\gamma_d}{u}<\delta$.
Writing $\gamma_d = \Din{a_n}$, we obtain
\begin{align*}
 \DiNorm{\gamma_d}{t}	&=	\sum_{n=1}^\infty \norm{a_n}n^{-t}
			=	\sum_{n\leq n_0} \norm{a_n}n^{-t}+\sum_{n>n_0} \norm{a_n}n^{-t}
		\\&	=	\sum_{n\leq n_0} \norm{a_n}n^{-u}\cdot \underbrace{n^{u-t}}_{\leq n_0^{u-t}}
				  + \sum_{n>n_0} \norm{a_n}n^{-s}\cdot \underbrace{n^{s-t}}_{=\frac{1}{n^2}}
		\\&	\leq	n_0^{u-t}\sum_{n\leq n_0} \norm{a_n}n^{-u}
				  + \sum_{n>n_0}   \underbrace{ \norm{a_n}n^{-s} }_{\leq \DiNorm{\gamma_d}{s}}         \cdot \frac{1}{n^2}
		\\&	\leq	n_0^{u-t} \underbrace{\DiNorm{\gamma_d}{u}}_{<\delta}
				  + \underbrace{\DiNorm{\gamma_d}{s}}_{<2} \underbrace{\sum_{n>n_0}\frac{1}{n^2}}_{<\frac{\epsilon}{4} }
		   	<	n_0^{u-t}\cdot\delta + 2\cdot \frac{\epsilon}{4}
			=	\epsilon.
\end{align*}
This is what we had to show.
\end{proof}

\subsection*{Lie groups associated with Dirichlet series}
From now on, let $G$ denote a fixed complex Banach Lie group with Lie algebra $\g$. As before, $s\in\R$ is a real number.
We know that $G$ has an exponential function $\smfunc{\exp_G}{\g}{G}$. Every Dirichlet series $\gamma\in\Di{s}{\g}$ with values in $\g$ can be composed with the exponential function and yields a continuous function from $\Halfc{s}$ to $G$. All these continuous functions generate a group (with respect to pointwise multiplication of functions):

\begin{theorem}[Lie groups associated with Dirichlet series (Banach case)]
 Let $s\in\R$, a Banach Lie group $G$ with Lie algebra $\g$ be given. Then there exists a unique Banach Lie group structure on the group 
\[
 \Di{s}{G}:= \generatedby{\set{\exp_G \circ \gamma}{\gamma\in \Di{s}{\g}}} \leq \Cont{\Halfc{s}}{G}
\]
 such that
\[
 \func{\Exp_s}{\Di{s}{\g}   }{ \Di{s}{G} }{\gamma}{\exp_G\circ \gamma}.
\]
becomes a local diffeomorphism around $0$.
\end{theorem}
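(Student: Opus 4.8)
The plan is to apply Corollary~\ref{cor_local_data} with modelling Lie algebra $\mathfrak{a}:=\Di{s}{\g}$ and ambient group $H:=\Cont{\Halfc{s}}{G}$ under pointwise multiplication. First I would equip $\Di{s}{\g}$ with a Banach Lie algebra structure. Writing $\gamma_1=\Din{a_n}$ and $\gamma_2=\Din{b_n}$, the pointwise bracket of the associated functions is again a Dirichlet series, namely $[\gamma_1,\gamma_2]=\Din{c_n}$ with $c_n=\sum_{d\mid n}[a_d,b_{n/d}]$. Assuming (after rescaling, as in the discussion preceding the proof of Theorem~C) that the norm on $\g$ satisfies $\norm{[x,y]}\le\norm{x}\norm{y}$, the Dirichlet convolution estimate
\[
 \DiNorm{[\gamma_1,\gamma_2]}{s}\le\sum_{n}\sum_{d\mid n}\norm{a_d}\,d^{-s}\,\norm{b_{n/d}}\,(n/d)^{-s}=\DiNorm{\gamma_1}{s}\,\DiNorm{\gamma_2}{s}
\]
shows that the bracket is continuous and that $\DiNorm{\cdot}{s}$ is a compatible norm on the Banach Lie algebra $\Di{s}{\g}$.

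Since the norm is compatible, the standard Banach Lie algebra estimate already invoked in the proof of Theorem~C guarantees that the \BCH-series converges on the ball $U:=\oBallin{R}{\Di{s}{\g}}{0}$ with $R=\log\frac{3}{2}$, defining a $C^\omega$-multiplication $\smfunc{*}{U\times U}{V}$ into $V:=\oBallin{C}{\Di{s}{\g}}{0}$, $C=\log 2$. I would then take $\Phi:=\Exp_s\colon\gamma\mapsto\exp_G\circ\gamma$ into $H$. The crucial observation is that for each fixed $z\in\Halfc{s}$ the evaluation $\smfunc{\ev_z}{\Di{s}{\g}}{\g}$, $\gamma\mapsto\gamma(z)$, is a \emph{continuous} Lie algebra homomorphism: linearity is clear, and the identity $[\gamma_1(z),\gamma_2(z)]=\sum_n c_n n^{-z}=[\gamma_1,\gamma_2](z)$ verifies compatibility with the bracket. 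As $\ev_z$ is continuous and the \BCH-series is a convergent series of Lie polynomials, $\ev_z$ commutes with $*$, so $(\gamma_1*\gamma_2)(z)=\gamma_1(z)*_{\g}\gamma_2(z)$; applying the defining property of \BCH\ in $G$ pointwise yields
\[
 \Exp_s(\gamma_1*\gamma_2)(z)=\exp_G\bigl(\gamma_1(z)*_{\g}\gamma_2(z)\bigr)=\exp_G(\gamma_1(z))\exp_G(\gamma_2(z)),
\]
that is, $\Phi(\gamma_1*\gamma_2)=\Phi(\gamma_1)\cdot\Phi(\gamma_2)$ in $H$.

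To finish, I would verify the remaining hypothesis of Corollary~\ref{cor_local_data}, injectivity of $\Phi$ on $V$. Shrinking $V$ so that every value $\gamma(z)$ (which satisfies $\norm{\gamma(z)}\le\DiNorm{\gamma}{s}$) lies in a neighborhood on which $\exp_G$ is injective, an equality $\exp_G\circ\gamma_1=\exp_G\circ\gamma_2$ forces $\gamma_1(z)=\gamma_2(z)$ for all $z$, hence $\gamma_1=\gamma_2$ by injectivity of the embedding $\Di{s}{\g}\hookrightarrow\Cont{\Halfc{s}}{\g}$ (uniqueness of the Dirichlet representation, established earlier). Corollary~\ref{cor_local_data} then produces a unique $C^\omega$-Lie group structure on $\generatedby{\Phi(U)}$ making $\Phi|_U$ a diffeomorphism onto an open set, with the exponential function agreeing with $\Exp_s$ near $0$; this group is Banach since $\Di{s}{\g}\cong\lone(\N,\g)$. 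It remains to identify $\generatedby{\Phi(U)}$ with $\Di{s}{G}$: for any $\gamma\in\Di{s}{\g}$ and $N$ large we have $\gamma/N\in U$, and since $t\mapsto\exp_G(t\gamma(z))$ is a one-parameter subgroup of $G$ for each $z$, we get $\bigl(\Exp_s(\gamma/N)\bigr)^{N}=\Exp_s(\gamma)$ pointwise, so $\Exp_s(\gamma)\in\generatedby{\Phi(U)}$ and therefore $\generatedby{\Phi(U)}=\Di{s}{G}$.

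The main obstacle I anticipate is precisely the compatibility of $\Exp_s$ with the two group laws: one must justify interchanging the evaluation $\ev_z$ with the infinite \BCH-series and then confirm that the resulting pointwise formula really is the multiplication of the ambient group $\Cont{\Halfc{s}}{G}$. Once this is secured, the convolution-bracket estimate, the one-parameter subgroup argument identifying the generated group, and the uniqueness statement are all routine consequences of Corollary~\ref{cor_local_data}.
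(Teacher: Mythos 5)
Your proposal is correct and follows essentially the same route as the paper: equip $\Di{s}{\g}$ with the Dirichlet-convolution bracket and the compatible norm $\DiNorm{\cdot}{s}$, observe that the \BCH{}-multiplication on $\Di{s}{\g}$ is the pointwise \BCH{}-multiplication (which you justify, slightly more explicitly than the paper, via the continuous evaluation homomorphisms $\ev_z$), apply Corollary \ref{cor_local_data} with $H=\Cont{\Halfc{s}}{G}$ using injectivity of $\exp_G$ near $0$ and uniqueness of Dirichlet representations, and identify $\generatedby{\Exp_s(U)}$ with $\Di{s}{G}$ by the $(\Exp_s(\gamma/N))^N=\Exp_s(\gamma)$ scaling argument. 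The only blemish is a constant: convergence of \BCH{} is guaranteed for $\norm{x}+\norm{y}<\log 2$ (resp.\ $\log\frac32$), so $U\times U$ with $U$ a ball of radius $\log\frac32$ is too large; one must shrink $U$ as the paper does when choosing $\delta$, which is a trivial repair.
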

\begin{proof}
We start by choosing a compatible norm on $\g$, i.e. $\gnorm{[x,y]}\leq \gnorm{x}\gnorm{y}$ for all $x,y\in\g$.
Then the space $\Di{s}{\g}$ also carries a continuous bilinear map of operator norm at most $1$:
\[
 \Func{[\cdot,\cdot]}{\Di{s}{\g}\times\Di{s}{\g}}{\Di{s}{\g}}
	{\!\!\!\displaystyle{\left(  \Big(\Din{a_n}\!\Big)\! ,\! \Big(\Din{b_n}\!\Big) \right)}\!\!\!}
	{\displaystyle{\DiN{\Bigg(\!\sum_{\substack{(n_1,n_2)\in \N\times\N\\ n_1\cdot n_2 = N}} \!\!\![a_{n_1},b_{n_2}]\Bigg)       },}}
\]
 turning it into a Banach Lie algebra.
Note that the inner sum is finite. This Lie bracket corresponds to the pointwise Lie bracket of functions.
The Lie algebra $\g$ becomes a closed Lie subalgebra of $\Di{s}{\g}$ by identifying elements of $\g$ with constant Dirichlet series.

Now, having transferred the Banach Lie algebra structure from $\g$ to $\Di{s}{\g}$, we would like to do the same with the group structure.

It is known that (see e.g. \cite[Chapter II, \S 7.2, Proposition 1]{MR1728312}) in a Banach Lie algebra with compatible norm, the \BCH{}-series converges on 
\[
 \Omega_\g:=\set{(x,y)\in\g\times\g}{\norm{x}+\norm{y}<\log2}
\]
and defines an analytic multiplication: $ \smfunc{*}{\Omega_\g}{\g}.$
Since $\Di{s}{\g}$ is a Banach Lie algebra in its own right, we also have a \BCH{}-multiplication there:
$ \smfunc{*}{\Omega_{\Di{s}{\g}}}{\Di{s}{\g}}.$
The \BCH{}-series is defined only in terms of iterated Lie brackets. Since addition and Lie bracket of Dirichlet series in $\Di{s}{\g}$ correspond to the pointwise operations in $\g$, the \BCH{}-multiplication in $\Di{s}{\g}$ corresponds to the pointwise \BCH{}-multiplication of functions.

Since $G$ is a Banach Lie group, it is locally exponential, therefore there is a number $\injepsilon>0$ such that $\exp_G|_{\oBallin{\injepsilon}{\g}{0}}$ is injective. 
Since the \BCH{}-multiplication on $\g$ is continuous, there is a $\delta>0$ such that $\oBallin{\delta}{\g}{0}\times\oBallin{\delta}{\g}{0}\subseteq \Omega_\g$ and $\oBallin{\delta}{\g}{0}*\oBallin{\delta}{\g}{0}\subseteq \oBallin{\injepsilon}{\g}{0}$.

Let $\Cont{\Halfc{s}}{G}$ be the (abstract) group of all continuous maps from $\Halfc{s}$ to $G$ with pointwise multiplication.
Then we may define the following map
\[
 \func{\Exp_s}{\Di{s}{\g}   }{ \Cont{\Halfc{s}}{G} }{\gamma}{\exp_G\circ \gamma}.
\]
The restriction of $\Exp_s$ to $\oBallin{\injepsilon}{\Di{s}{\g}}{0}$ is injective since $\exp_G|_{\oBallin{\injepsilon}{\g}{0}}$ is injective. Here we use that two Dirichlet series in $\Di{s}{\g}$ are equal if they represent the same function in $\Contb{\Halfc{s} }{\g}$.

Now, all hypotheses for Corollary \ref{cor_local_data} are satisfied for $U:=\oBallin{\delta}{\Di{s}{\g} }{0}, V:=\oBallin{\injepsilon}{\Di{s}{\g} }{0}$ and $H:=\Cont{\Halfc{s}}{G}$. Therefore, by Corollary \ref{cor_local_data}, we get a unique $C^\omega$-Lie group structure on the group $\generatedby{\Exp_s( U  )}$ such that 
\[
 \smfunc{\Exp_s|_U}{U\subseteq \Di{s}{\g} }{ \generatedby{\Exp_s( U  )} }
\]
 is a $C^\omega$-diffeomorphism.

But this group, that now has a Lie group structure, is exactly the group $\Di{s}{G}:= \generatedby{\set{\exp_G \circ \gamma}{\gamma\in \Di{s}{\g}}}$ defined above. This is so because for every generator $\exp_G \circ \gamma$ with $\gamma\in \Di{s}{\g}$ there is an $n\in\N$ such that $\frac{1}{n}\gamma\in U$ and therefore 
\[
 \exp_G \circ \gamma= \exp_G \circ \left(n\cdot \frac{1}{n}\gamma \right)=\left(\exp_G\circ \left(\frac{1}{n}\gamma \right)\right)^n\in \generatedby{\Exp_s(U) }.
\]
\end{proof}

\begin{theorem}[Lie groups associated with Dirichlet series (LB case)]
 On the group 
\[
  \Dinf{G}:= \bigcup_{s\in\R}\Di{s}{G}=\generatedby{\set{\exp_G\circ \gamma}{\gamma \in \Di{s}{\g}, s\in \R}}
\]
 there is a unique Lie group structure turning
\[
 \func{\Exp:=\bigcup_{s\in\R}\Exp_s}{\Dinf{\g}}{\Dinf{G}}{\gamma\in \Di{s}{\g}}{\exp_G\circ\gamma}
\]
into a local diffeomorphism around $0$.
\end{theorem}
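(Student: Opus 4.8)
The plan is to recognize $\Dinf{G}$ as an ascending union of the Banach Lie groups built in the Banach case and to invoke Theorem~C. Since $(\N,\leq)$ is cofinal in $(\R,\leq)$, we have $\Dinf{G}=\bigcup_{n\in\N}\Di{n}{G}$ and $\Dinf{\g}=\bigcup_{n\in\N}\Di{n}{\g}$, so it suffices to treat the countable chain $G_n:=\Di{n}{G}$, $\g_n:=\Di{n}{\g}=\L(G_n)$. The inclusions $G_n\hookrightarrow G_{n+1}$ are the restriction maps of functions from $\Halfc{n}$ to the smaller half plane $\Halfc{n+1}$; they are injective group homomorphisms, and read in the charts $\Exp_n,\Exp_{n+1}$ they correspond near the identity to the continuous linear bonding map $i_n\colon\g_n\to\g_{n+1}$. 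Hence each inclusion is $C^\omega$ on an identity neighborhood, and since $G_n=\generatedby{\Exp_n(U)}$ is connected, being a homomorphism it is $C^\omega$ everywhere. Thus the data match the setting of Theorem~C, and it remains to check conditions (a)--(c).

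For (a), fix a compatible norm $\gnorm{\cdot}$ on $\g$. A direct estimate on coefficients, using that the inner convolution sum respects the compatible norm, shows that the induced norm $\DiNorm{\cdot}{n}$ on $\Di{n}{\g}$ is again compatible,
\[
 \DiNorm{[\gamma_1,\gamma_2]}{n}\leq \DiNorm{\gamma_1}{n}\,\DiNorm{\gamma_2}{n},
\]
and the bonding maps $i_n\colon\Di{n}{\g}\to\Di{n+1}{\g}$ have operator norm at most $1$ with respect to these very same norms. It is exactly here that the general obstruction warned about in Section~\ref{sec_UNION} --- that one cannot usually arrange compatible brackets and norm-decreasing bonding maps at once --- fails to arise: a single family of norms does both jobs. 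Condition (b), the Hausdorff property of $\Dinf{\g}$, is precisely Proposition~\ref{prop_direchlet_limit}.

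The remaining and most substantial point is (c), injectivity of $\Exp=\bigcup_n\Exp_n$ on some $0$-neighborhood. Let $\injepsilon>0$ be an injectivity radius for $\exp_G$ on $\g$ and set $V:=\bigcup_{n\in\N}\oBallin{\injepsilon}{\Di{n}{\g}}{0}$. This is a $0$-neighborhood of the direct limit: for any sequence with $\sum_i\delta_i<\injepsilon$ the basic neighborhood $\V(\delta_1,\delta_2,\ldots)$ of Proposition~\ref{prop_direct_neighborhood} is contained in $V$, since every element $x_1+\cdots+x_n$ with $\norm{x_i}_{\Di{i}{\g}}<\delta_i$ has $\Di{n}{\g}$-norm at most $\sum_i\delta_i<\injepsilon$ by norm-decrease of the bonding maps. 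Now if $\gamma\in\Di{n}{\g}$ with $\DiNorm{\gamma}{n}<\injepsilon$, then since the evaluation operator $\Di{n}{\g}\to\Cont{\Halfc{n}}{\g}$ has norm $1$ we get $\supnorm{\gamma}\leq\DiNorm{\gamma}{n}<\injepsilon$, so $\gamma$ takes values in the region where $\exp_G$ is injective. Given $\gamma_1,\gamma_2\in V$ with $\Exp(\gamma_1)=\Exp(\gamma_2)$, pass to a common step $\Di{n}{\g}$ (the sup-norm bound persists because $\DiNorm{\cdot}{n}\leq\DiNorm{\cdot}{m}$ for $m\leq n$); then $\exp_G\circ\gamma_1=\exp_G\circ\gamma_2$ pointwise on $\Halfc{n}$ forces $\gamma_1=\gamma_2$ as functions, and since the passage from a Dirichlet series to its associated function is injective, $\gamma_1=\gamma_2$ as Dirichlet series. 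Hence $\Exp$ is injective on $V$.

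With (a)--(c) verified, Theorem~C yields a unique $\C$-analytic Lie group structure on $\bigcup_n G_n=\Dinf{G}$ for which $\Exp=\bigcup_n\Exp_n$ is a local $C^\omega$-diffeomorphism at $0$, which is the assertion. I expect the genuine work to lie in (c) --- in exhibiting a bona fide direct-limit $0$-neighborhood inside $V$ and transporting the local injectivity of $\exp_G$ through the sup-norm estimate together with the injectivity of the series-to-function correspondence --- plus the bookkeeping confirming that the exponential map furnished by Theorem~C is indeed the union $\bigcup_n\Exp_n$.
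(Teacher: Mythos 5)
Your proposal is correct and follows essentially the same route as the paper: reduce to the countable cofinal chain $\Di{n}{G}$, verify hypotheses (a)--(c) of Theorem~C (with (b) being Proposition~\ref{prop_direchlet_limit}), and conclude. The extra detail you supply for (c) --- the norm-$1$ evaluation operator giving $\supnorm{\gamma}\leq\DiNorm{\gamma}{n}$ and the injectivity of the series-to-function correspondence --- is exactly the argument the paper uses in the Banach case and merely cites in the LB case.
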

\begin{proof}
 We wish to use Theorem~C.
 For every $s\in \N$, set $G_s := \Di{s}{G}$. The bonding maps $\smfunc{j_s}{G_s}{G_{s+1} }$ are group homomorphisms.
 Since $j_s\circ\Exp_s=\Exp_s\circ i_s$ with the continuous linear inclusion map $\smfunc{i_s}{\Di{s}{\g} }{\Di{s+1}{\g} }$, we see that each $j_n$ is analytic with $\L(j_s)=i_s$.

 By construction, the norms on the Lie algebras $\Di{s}{\g}$ and the bounded operators 
 $\smfunc{i_s}{\Di{s}{\g}}{\Di{s+1}{\g} }$ have operator norm at most $1$. The locally convex direct limit is Hausdorff by Proposition \ref{prop_direchlet_limit}, and the exponential map $\Exp=\bigcup_{s\in\N}\Exp_s$
 is injective on the $0$-neighborhood $\bigcup_{s\in\N}\oBallin{\injepsilon}{\Di{s}{\g} }{0}$.
Hence, by Theorem~C, there is a unique complex analytic Lie group structure on $G$ such that $\Exp$ is a local diffeomorphism at $0$.
\end{proof}

This proves Theorem~D stated in the introduction.

\addcontentsline{toc}{section}{References}

\bibliography{literatur}{}
\bibliographystyle{plain}
\end{document}